\documentclass[10pt]{amsart}
\usepackage{amssymb}
\usepackage{dsfont}
\usepackage{amscd}
\usepackage[mathscr]{euscript} 
\usepackage[all]{xy}
\usepackage{url}
\usepackage{stmaryrd}
\usepackage{comment}
\usepackage[retainorgcmds]{IEEEtrantools}
\usepackage{hyperref}
\usepackage{graphicx}
\title{Existentially closed fields with ${G}$-derivations}
\author[D. HOFFMANN]{Daniel Hoffmann$^{\dagger}$}
\thanks{2010 \textit{Mathematics Subject Classification}. Primary 13N15, 03C60; Secondary 14L15.}
\thanks{\textit{Key words and phrases}. Hasse-Schmidt derivations, group scheme actions, existentially closed structures.}
\address{$^{\dagger}$Instytut Matematyczny\\
Uniwersytet Wroc{\l}awski\\
Wroc{\l}aw\\
Poland}
\email{daniel.hoffmann@math.uni.wroc.pl}
\author[P. KOWALSKI]{Piotr Kowalski$^{\spadesuit}$}
\thanks{$^{\spadesuit}$Supported by T\"{u}bitak grant 2221 and NCN grant 2012/07/B/ST1/03513. Partially supported by ANR Modig (ANR-09-BLAN-0047)}
\address{$^{\spadesuit}$Instytut Matematyczny\\
Uniwersytet Wroc{\l}awski\\
Wroc{\l}aw\\
Poland}
\email{pkowa@math.uni.wroc.pl} \urladdr{http://www.math.uni.wroc.pl/\textasciitilde pkowa/ }

\DeclareMathOperator{\locus}{locus}
  
 \DeclareMathOperator{\aut}{Aut} \DeclareMathOperator{\id}{id}
 \DeclareMathOperator{\fr}{Fr} \DeclareMathOperator{\lie}{Lie}

\DeclareMathOperator{\ch}{char}  
 
 \DeclareMathOperator{\alg}{alg}

\DeclareMathOperator{\coli}{\underrightarrow{\lim}}

\DeclareMathOperator{\ev}{ev}

\DeclareMathOperator{\spec}{Spec}
\DeclareMathOperator{\sep}{sep}

\DeclareMathOperator{\sch}{SCH}
\DeclareMathOperator{\ddf}{DF}\DeclareMathOperator{\dcf}{DCF}\DeclareMathOperator{\scf}{SCF}
\newtheorem{theorem}{Theorem}[section]
\newtheorem{prop}[theorem]{Proposition}
\newtheorem{lemma}[theorem]{Lemma}
\newtheorem{cor}[theorem]{Corollary}
\newtheorem{fact}[theorem]{Fact}
\theoremstyle{definition}
\newtheorem{definition}[theorem]{Definition}
\newtheorem{example}[theorem]{Example}
\newtheorem{remark}[theorem]{Remark}

\newtheorem{notation}[theorem]{Notation}

\begin{document}

\newcommand{\twoc}[3]{ {#1} \choose {{#2}|{#3}}}
\newcommand{\thrc}[4]{ {#1} \choose {{#2}|{#3}|{#4}}}
\newcommand{\Zz}{{\mathds{Z}}}
\newcommand{\Ff}{{\mathds{F}}}
\newcommand{\Cc}{{\mathds{C}}}
\newcommand{\Rr}{{\mathds{R}}}
\newcommand{\Nn}{{\mathds{N}}}
\newcommand{\Qq}{{\mathds{Q}}}
\newcommand{\Kk}{{\mathds{K}}}
\newcommand{\Pp}{{\mathds{P}}}
\newcommand{\ddd}{\mathrm{d}}
\newcommand{\Aa}{\mathds{A}}
\newcommand{\dlog}{\mathrm{ld}}
\newcommand{\ga}{\mathbb{G}_{\rm{a}}}
\newcommand{\gm}{\mathbb{G}_{\rm{m}}}
\newcommand{\gaf}{\widehat{\mathbb{G}}_{\rm{a}}}
\newcommand{\gmf}{\widehat{\mathbb{G}}_{\rm{m}}}
\newcommand{\gdf}{\mathfrak{g}-\ddf}
\newcommand{\gdcf}{\mathfrak{g}-\dcf}
\newcommand{\fdf}{F-\ddf}
\newcommand{\fdcf}{F-\dcf}

\maketitle
\begin{abstract}
We prove that the theories of fields with Hasse-Schmidt derivations corresponding to actions of formal groups admit model companions. We also give geometric axiomatizations of these model companions.
\end{abstract}
\section{Introduction}
\noindent
In this paper, we describe the model theory of fields with Hasse-Schmidt derivations (abbreviated as \emph{HS-derivations} in the sequel) obeying iterativity conditions coming from the actions of formal groups. We consider ``$e$-dimensional HS-derivations in a generalized sense'' (see e.g. \cite[Def. 2.12]{HrFro}). This approach includes the case of $e$-tuples of the usual HS-derivations. (Actually, both approaches are equivalent in the iterative case, see Remark \ref{onedimenough}.)
\\
\\
One could wonder why do the iterativity conditions help to understand the first-order theories of the fields with HS-derivations. The main reason is that the iterativity conditions enable us to characterize the \'{e}tale extensions of fields in a first order fashion, see Lemma \ref{constantdisjoint} (originating from \cite[Cor. 2.2]{Zieg2}). Such a characterization is crucial for the quantifier elimination results in Section \ref{secmc}.
\\
\\
We consider both truncated and full HS-derivations. The iterativity rules considered in this paper are governed by (finite in the truncated case) formal groups. We describe the model companions of the theories of fields with such HS-derivations mostly using the ideas from \cite{Zieg2}. Then, we extend the results about the geometric axiomatizations  \cite{K3} from the case  of ``additive'' iterativity ($F=\widehat{\ga^e}$) to the case of an arbitrary $F$-iterative rule ($F$ is a formal group). We address the question whether the theories we obtain are bi-interpretable with the theory of separably closed fields with a fixed imperfection invariant (it is the case in \cite{Zieg2}). It turns out that we get such a bi-interpretability result for formal groups which are the formalizations of algebraic groups.
It is not clear for us what happens for the other formal groups. We also discuss the notion of ``canonical $G$-tuples'' (see Definition \ref{defpbasis}), which generalizes Ziegler's notion of canonical $p$-basis \cite{Zieg2},  and possible generalizations of our methods to the context of \cite{MS}.
\\
\\
The paper is organized as follows. In Section \ref{secdef}, we introduce the notation and conventions which will be used in the paper. In Section \ref{secsetup}, we develop an algebraic theory of (iterative, truncated, multi-dimensional) HS-derivations. In Section \ref{secmc}, we apply the results from Section \ref{secsetup} to obtain a description of the model complete theories of fields with different types of HS-derivations. In Section \ref{secaxioms}, we give geometric axioms for the theories considered in Section \ref{secmc}. In Section \ref{msecop}, we speculate about possible extensions of the results of this paper to more general contexts.

\subsection{Formal group actions and model theory}
For convenience of a reader not familiar with commutative algebra around the theory of formal groups, we provide in this section an elementary argument (originating from an observation of Matsumura \cite[Section 27]{mat}) explaining how formal group scheme actions can be understood as HS-derivations satisfying an iterativity rule.

For simplicity, we only consider the one-dimensional case.
A sequence $(D_i:R\to R)_{i\in \Nn}$ is an HS-derivation (over $k$) if and only if the corresponding map
$$\mathbb{D}:R\to R\llbracket X\rrbracket,\ \ \ \mathbb{D}(r)=\sum_{i=0}^{\infty}D_i(r)X^i$$
is a ring ($k$-algebra) homomorphism and a section of the projection map  $R\llbracket X\rrbracket\to R$. The standard iterativity condition may be expressed using the following diagram
\begin{equation*}
\xymatrix{R \ar[rr]^{\mathbb{D}} \ar[d]_{\mathbb{D}} & & R\llbracket X\rrbracket \ar[d]^{\mathbb{D}\llbracket X\rrbracket}
 \\ R\llbracket X\rrbracket\ar[rr]^{X\mapsto X+Y} & & R\llbracket X,Y\rrbracket,}
\end{equation*}
i.e. $\mathbb{D}$ is iterative if and only if the diagram above is commutative. Clearly, the additive \emph{formal group law} $X+Y$ is crucial for the standard iterativity rule above. (A formal group law over $k$, see e.g. \cite[Chapter IV.2]{Si}, is a power series $F$ over $k$ in two variables satisfying formally the group axioms, e.g. $F(F(X_1,X_2),X_3)=F(X_1,F(X_2,X_3))$.) The diagram above may be interpreted as an action of the formal additive group scheme on the scheme $\spec(R)$. We explain it below in an easier case of \emph{truncated} HS-derivations (see Section \ref{sectrun}), which correspond to actions (in the category of schemes) of finite group schemes.
\\
We start from the truncated iterativity diagram, which is just a truncation of the diagram above expressing the standard iterativity of HS-derivations
\begin{equation*}
\xymatrix{R \ar[rr]^{\mathbb{D}} \ar[d]_{\mathbb{D}} & & R[v_m] \ar[d]^{\mathbb{D}[v_m]}
 \\ R[w_m]\ar[rr]^{c_R} & & R[w_m,v_m].}
\end{equation*}
Here $c_R(w_m)=v_m+w_m$ gives a Hopf algebra structure over $R$. Using the tensor product over the base field $k$, we obtain the following diagram.
\begin{equation*}
\xymatrix{R \ar[rr]^{\mathbb{D}\ \ \ \ } \ar[d]_{\mathbb{D}} & & k[v_m]\otimes R \ar[d]^{\id_{k[v_m]}\otimes \mathbb{D}}
 \\ k[v_m]\otimes R\ar[rr]^{c\otimes \id_R\ \ \ \ } & & k[v_m]\otimes k[v_m]\otimes R}
\end{equation*}
Going to the opposite category (of $k$-schemes), we see that we get exactly the diagram expressing the mixed associativity of a group scheme action
\begin{equation*}
\xymatrix{R  & & \ar[ll]_{\widetilde{\mathbb{D}}}  \mathfrak{g}\times X
 \\ \mathfrak{g}\times X  \ar[u]^{\widetilde{\mathbb{D}}} & & \ar[ll]_{\mu \times \id_X} \ar[u]_{\id_{\mathfrak{g}}\times \widetilde{\mathbb{D}}} \mathfrak{g}\times \mathfrak{g}\times X,}
\end{equation*}
where $X=\spec(R)$ and $\mathfrak{g}=\ker(\fr^m_{\ga})$.

The considerations above lead to an interesting conclusion that actions of groups with ``no points'' (i.e. finite local group schemes as above), which seem to be very far from any model-theoretic considerations, are actually amenable to model-theoretic treatment; it is the main point of this paper.


\section{Definitions, notation and conventions}\label{secdef}
\noindent
In this section we introduce the notation and conventions which we are going to use throughout the paper. We also recall (or refer to) several standard notions.
\\
\\
In the entire paper, $k$ will be a perfect field of characteristic $p>0$ (unless we clearly say that $\ch(k)=0$). The category of \emph{affine group schemes} over $k$ is the category opposite to the category of \emph{Hopf algebras} over $k$ \cite[Section 1.4]{Water} (or it is the category of representable functors from $k$-algebras to groups, see \cite[Section 1.2]{Water}). A \emph{truncated group scheme} \cite{Chase2} over $k$ is an affine group scheme whose universe is  isomorphic to $\spec(k[X_1,\ldots,X_e]/(X_1^{p^m},\ldots,X_e^{p^m}))$.
\begin{remark}\label{notrunzero}
If $\ch(k)=0$, then by a theorem of Cartier \cite[Section 11.4]{Water} all Hopf algebras over $k$ are reduced, so there are no truncated group schemes.
\end{remark}
\noindent
The category of \emph{formal groups} over $k$ is the category opposite to the category of \emph{complete Hopf algebras} over $k$ (or the category of representable functors from complete $k$-algebras to groups, see \cite[Chapter VII]{Hazew}). There is a correspondence between smooth formal groups (the underlying complete algebra is the power series algebra in $e$ variables) and formal group laws, where an $e$-dimensional formal group law over $k$, is a power series in $2e$ variables formally satisfying the group axioms, see \cite[Sect. 9.1]{Hazew}. Note that a truncated group scheme is both an affine group scheme and a formal group.
\\
\\
For the rest of the paper we fix the following.
\begin{itemize}
\item Let $m$ and $e$ be positive integers.

\item Let $\mathbf{X}$ denote the tuple of variables $(X_1,\ldots,X_e)$. For a tuple $\mathbf{n}=(n_1,\ldots,n_e)$ of natural numbers, we denote $X_1^{n_1}\ldots X_e^{n_e}$ by $\mathbf{X}^{\mathbf{n}}$.

\item Let $k[\mathbf{v}_m]$ denote the ring $k[\mathbf{X}]/(X_1^{p^m},\ldots,X_e^{p^m})$.

\item For a positive integer $l$, let $[l]$ denote the set $\{0,\ldots,l-1\}$.

\item Let $\mathfrak{g}$ be a group scheme over $k$ whose underlying scheme is $\spec(k[\mathbf{v}_m])$.

\item Let $R$ and $S$ be $k$-algebras.

\item Let $G$ be an algebraic group over $k$.

\item Let $V$ be a scheme over $k$.

\item Let $F$ be an $e$-dimensional formal group law over $k$.
\end{itemize}

\subsection{Truncations of group schemes}\label{gpschemetrun}
Let $\mathfrak{G}$ be an affine group scheme over $k$, $H$ the corresponding Hopf algebra and $\mathfrak{m}$ be the kernel of the counit map $H\to k$ (the \emph{augmentation ideal}). Using the base-change given by the automorphism $\fr^m:k\to k$ we get the affine group scheme $\mathfrak{G}^{\fr^m}$ over $k$ and a group scheme
morphism $\fr^m_{\mathfrak{G}}:\mathfrak{G}\to \mathfrak{G}^{\fr^m}$. Let $\mathfrak{G}[m]$ be the kernel of $\fr^m_{\mathfrak{G}}$ which is a truncated $k$-group scheme. (In the case of a commutative group scheme $A$, $A[m]$ often denotes the kernel of multiplication by $m$ and ``our'' $A[m]$ is often denoted by $A[\fr^m]$. Since we do not consider the kernel of multiplication by $m$ in this paper, we prefer our simplified notation.)

It corresponds to the quotient Hopf algebra $$H[m]:=H/\fr^m(\mathfrak{m})H.$$
We get a direct system of truncated $k$-group schemes $(\mathfrak{G}[n])_{n\in \Nn}$. If $\mathfrak{G}=G$, then $\coli(G[n])$ coincides with $\widehat{G}$, the formal group which is the formalization of $G$ (see \cite[Lemma 1.1]{Manin}).
\\
Similarly, for a complete Hopf algebra $\mathcal{H}$, we have the analogous quotient $\mathcal{H}[m]$ which is a Hopf algebra and also a complete Hopf algebra. Hence for a formal group $F$, we have a direct system of truncated group schemes $F[m]$ and in this case we get that $F=\coli F[m]$, see \cite[Lemma 1.1]{Manin} again.
\begin{remark}\label{intgs}
One may ask whether any truncated group scheme $\mathfrak{g}$ can be integrated i.e. whether there is a formal group law $F$ such that $F[m]=\mathfrak{g}$. For $e=1$, the answer is positive if and only if $\mathfrak{g}$ is commutative \cite[Corollary 5.7.4]{Hazew} (see \cite[Example 5.7.8]{Hazew} for an example of a non-commutative $\mathfrak{g}$).
\end{remark}
\begin{remark}\label{gm}
For the truncated group scheme $\mathfrak{g}$, we get a finite direct system of truncated group schemes
$$0=\mathfrak{g}[0]<\mathfrak{g}[1]<\ldots <\mathfrak{g}[m-1]<\mathfrak{g}[m]=\mathfrak{g}.$$
The group schemes in this direct system may be described as follows. Let $i\in [m+1]$ and $\mathfrak{g}^{\fr^i}$ be the group scheme $\mathfrak{g}$ twisted
by the $i$-th power of the Frobenius map. Then we have a group scheme morphism $\fr_{\mathfrak{g}}^i:\mathfrak{g}\to \mathfrak{g}^{\fr^i}$ such that
$$\ker(\fr_{\mathfrak{g}}^i)=\mathfrak{g}[i],\ \ \ \fr_{\mathfrak{g}}^i(\mathfrak{g})=\mathfrak{g}^{\fr^i}[m-i].$$
\end{remark}

\section{Finite group schemes and iterative HS-derivations}\label{secsetup}
\noindent
In this section, we develop an algebraic theory of (iterative, truncated) multi-dimensional HS-derivations.

\subsection{Multi-dimensional truncated HS-derivations}\label{sectrun} We are going to use the following definition.
\begin{definition}\label{hstrdef}
\begin{enumerate}
\item  An \emph{$e$-dimensional HS-derivation on $R$ over $k$}  is a $k$-algebra homomorphism
$$\mathbb{D}:R\to R\llbracket \mathbf{X}\rrbracket$$
which is a section of the projection map
$$R\llbracket \mathbf{X}\rrbracket\to R,\ \ \ H\mapsto H(\mathbf{0}).$$

\item An \emph{$m$-truncated $e$-dimensional HS-derivation on $R$ over $k$}  is a $k$-algebra homomorphism
$$\mathbb{D}:R\to R[\mathbf{v}_m]$$
which is a section of the projection map $R[\mathbf{v}_m]\to R$.
\end{enumerate}
\end{definition}
\begin{remark}\label{edimrem}
\begin{enumerate}
\item From any $e$-dimensional HS-derivation $\mathbb{D}$ on $R$ over $k$ and any positive integer $n$, we get in an obvious way (i.e. by post-composing with the quotient map $R\llbracket \mathbf{X}\rrbracket\to  R[\mathbf{v}_n]$) an $n$-truncated $e$-dimensional HS-derivation on $R$ which we denote by $\mathbb{D}[n]$.

\item Let us denote $\mathbb{D}(r)$ by $\sum_{\mathbf{i}} D_{\mathbf{i}}(r)\mathbf{X}^{\mathbf{i}}$. Using such a notation, an $e$-dimensional HS-derivation on $R$ over $k$ is a sequence
$$\mathbb{D}=(D_{\mathbf{i}}:R\to R)_{\mathbf{i}\in \Nn^e}$$
satisfying the following properties:
\begin{itemize}
\item $D_{\mathbf{0}}=\id_R$,

\item each $D_{\mathbf{i}}$ is $k$-linear,

\item for any $x,y\in R$ we have
$$D_{\mathbf{i}}(xy)=\sum_{\mathbf{j}+\mathbf{k}=\mathbf{i}}D_{\mathbf{j}}(x)D_{\mathbf{k}}(y).$$
\end{itemize}

\item Each $e$-dimensional HS-derivation $\mathbb{D}$ on $R$ gives the following tuple of (1-dimensional) HS-derivations on $R$:
$$\mathbb{D}_1:=(D_{(i,0,\ldots,0)})_{i\in \Nn},\ \ldots \ ,\mathbb{D}_e:=(D_{(0,\ldots,0,i)})_{i\in \Nn}.$$
On the level of $k$-algebra maps, the above $m$-truncated HS-derivations correspond to the composition of $\mathbb{D}:R\to R\llbracket \mathbf{X}\rrbracket$ with the appropriate projection map $R\llbracket \mathbf{X}\rrbracket\to R\llbracket X\rrbracket$.

\item On the other hand, each $e$-tuple of (1-dimensional) HS-derivations on $R$ gives an $e$-dimensional HS-derivation on $R$, e.g. for $e=2$ and $\mathbb{D}_1,\mathbb{D}_2:R\to  R\llbracket X\rrbracket$ we get the $2$-dimensional HS-derivation on $R$ given by the composition below
    \begin{equation*}
 \xymatrixcolsep{2.5pc}\xymatrixrowsep{1.5pc}\xymatrix{R \ar[r]^{\mathbb{D}_1\ \ \ }  & R\llbracket X\rrbracket \ar[r]^{\mathbb{D}_2\llbracket X\rrbracket\ \ \ \  } &
R\llbracket X,Y\rrbracket .}
\end{equation*}
\noindent
However, not all $e$-dimensional HS-derivations on $R$ can be obtained in such a way. In fact an $e$-dimensional HS-derivation $\mathbb{D}$ is not necessarily determined by the $e$-tuple $\mathbb{D}_1,\ldots,\mathbb{D}_e$ from $(3)$. For example consider the (truncated) case when $p=2$ and $m=1$. If $\partial$ is a non-zero derivation on $R$, then the map
$$\mathbb{D}(r)=r+\partial(r)(X+(X^2))(Y+(Y^2))$$
is a non-zero $1$-truncated $2$-dimensional HS-derivation, but the corresponding $1$-truncated $1$-dimensional HS-derivations from $(3)$ are the zero maps.

\item All the above (for $n\leqslant m$) applies to $m$-truncated $e$-dimensional HS-derivations (after replacing ``$\Nn$'' with ``$[p^m]$'' and ``$\llbracket \mathbf{X}\rrbracket$'' with ``$[\mathbf{v}_m]$'').

\item We can extend Definition \ref{hstrdef} to define $m$-truncated $e$-dimensional HS-derivations from $R$ to $S$ (we do not require anything about the sections here).

\item Our $m$-truncated $1$-dimensional HS-derivations correspond to the \emph{higher derivations of length $p^m-1$} from \cite{mat}.

\end{enumerate}
\end{remark}
\noindent
For the definition of an \'{e}tale map/algebra, the reader is advised to consult \cite[p. 193]{mat} (called ``0-\'{e}tale'' there). It is easy to see that the condition ``$N^2=0$'' from \cite[p. 193]{mat} may be replaced with the condition ``$N$ is nilpotent'' (see e.g. Remark on page 199 of \cite{mat0}).
\begin{prop}\label{etalehs}
 Assume that $R\to S$ is  an \'{e}tale $k$-algebra map. Then any ($m$-truncated) $e$-dimensional HS-derivation $\mathbb{D}$ on $R$ uniquely extends to an ($m$-truncated) $e$-dimensional HS-derivation $\mathbb{D}'$ on $S$.
\end{prop}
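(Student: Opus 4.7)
The plan is to reduce the assertion to the standard infinitesimal lifting property of étale maps, applied to the nilpotent thickening $S[\mathbf{v}_m]\twoheadrightarrow S$. Recall that in $k[\mathbf{v}_m]$ the ideal $(X_1,\ldots,X_e)$ is nilpotent (any monomial of total degree $\geq e(p^m-1)+1$ vanishes), hence so is the kernel $N$ of the projection $\pi_S\colon S[\mathbf{v}_m]\to S$. Since $R\to S$ is étale and, as noted in the paper before the proposition, the defining lifting property still applies when $N$ is only nilpotent rather than square-zero, this is exactly the setup we need.

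Given $\mathbb{D}\colon R\to R[\mathbf{v}_m]$, I first form the composition
\[
\varphi\colon R\xrightarrow{\mathbb{D}} R[\mathbf{v}_m]\longrightarrow S[\mathbf{v}_m].
\]
The section property of $\mathbb{D}$ together with functoriality of $\pi$ gives $\pi_S\circ\varphi=(R\to S)$. Hence in the square
\[
\xymatrix{
R \ar[r]^-{\varphi} \ar[d] & S[\mathbf{v}_m] \ar[d]^{\pi_S} \\
S \ar[r]_-{\id_S} & S
}
\]
both triangles commute after pre/post-composition. By the étale lifting property applied to the nilpotent ideal $N$, there is a unique $k$-algebra homomorphism $\mathbb{D}'\colon S\to S[\mathbf{v}_m]$ making both triangles commute. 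The equation $\pi_S\circ\mathbb{D}'=\id_S$ says $\mathbb{D}'$ is a section, i.e.\ an $m$-truncated $e$-dimensional HS-derivation on $S$; the equation $\mathbb{D}'|_R=\varphi$ (after identifying $R$ inside $S$) says $\mathbb{D}'$ extends $\mathbb{D}$. Uniqueness is immediate from the uniqueness in the étale lifting.

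For the non-truncated case I apply the truncated result at each level $n\in\Nn$ to the truncations $\mathbb{D}[n]$ from Remark \ref{edimrem}(1), obtaining unique extensions $\mathbb{D}'_n\colon S\to S[\mathbf{v}_n]$. The uniqueness clause, applied to the composition of $\mathbb{D}'_{n+1}$ with the projection $S[\mathbf{v}_{n+1}]\to S[\mathbf{v}_n]$, forces this composition to equal $\mathbb{D}'_n$; so the system $(\mathbb{D}'_n)_n$ is compatible and assembles to a $k$-algebra map
\[
\mathbb{D}'\colon S\longrightarrow \li_n S[\mathbf{v}_n]=S\llbracket\mathbf{X}\rrbracket,
\]
which is visibly a section of the projection and restricts to $\mathbb{D}$ on $R$; uniqueness descends level-by-level from the truncated case.

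There is no real obstacle; the only point that requires attention is the justification that the kernel of $S[\mathbf{v}_m]\to S$ is nilpotent (so that the étale lifting property in the form recalled by the authors applies) and the bookkeeping in the final inverse-limit step to get the non-truncated statement from the truncated one.
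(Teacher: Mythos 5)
Your proof is correct, but it takes a genuinely more direct route than the paper. The paper follows Matsumura's Theorem 27.2 and proceeds by induction on the truncation degree, at each step lifting along the surjection $S[\mathbf{v}_{k+1}]\to S[\mathbf{v}_k]$, whose kernel it checks is nilpotent; the non-truncated case is absorbed into the same induction by letting $k$ range over all of $\Nn$. You instead apply the infinitesimal lifting property \emph{once}, to the full thickening $S[\mathbf{v}_m]\to S$, observing that its kernel $(X_1,\ldots,X_e)$ is nilpotent of order at most $e(p^m-1)+1$; this is legitimate precisely because of the remark the authors make just before the proposition, that the square-zero condition in the definition of \'{e}tale may be replaced by nilpotence. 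Both arguments ultimately rest on that same strengthened lifting property (the paper's inductive steps also produce kernels that are merely nilpotent, not square-zero, so the induction buys nothing there), and your one-shot version is cleaner for the truncated case. For the non-truncated case you pay a small price in bookkeeping: you must check compatibility of the level-$n$ extensions (which, as you note, follows from the uniqueness clause) and identify $\li_n S[\mathbf{v}_n]$ with $S\llbracket\mathbf{X}\rrbracket$ (valid since the ideals $(X_1^{p^n},\ldots,X_e^{p^n})$ are cofinal with the powers of $(X_1,\ldots,X_e)$). One stylistic caveat: writing ``$\mathbb{D}'|_R=\varphi$ after identifying $R$ inside $S$'' presumes $R\to S$ is injective, which an \'{e}tale map need not be; the correct statement, which is what your diagram actually delivers, is $\mathbb{D}'\circ(R\to S)=\varphi$. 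This does not affect the validity of the argument.
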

\begin{proof}
The proof goes almost exactly as in \cite[theorem 27.2]{mat}, so we will just sketch the main inductive step which makes clear how the \'{e}tale assumption is used. We apply the induction on the truncation degree. Assume that for $l\in \Nn$ (resp. $l<m$) we have extended $\mathbb{D}[l]$ to an $l$-truncated $e$-dimensional HS-derivation $\mathbb{D}':=(D'_{\mathbf{i}})_{\mathbf{i}<[p^l]^e}$ on $S$. Consider the following diagram
\begin{equation*}
 \xymatrixcolsep{2.5pc}\xymatrixrowsep{1.5pc}\xymatrix{& S[\mathbf{X}]/(X_1^{p^l},\ldots,X_e^{p^l}) &
\\ S  \ar[ur]^-{\mathbb{D}'} \ar@{-->}[rr]
 &  & S[\mathbf{X}]/(X_1^{p^{l+1}},\ldots,X_e^{p^{l+1}})  \ar[ul]_{\pi}
\\ & R \ar[ul]^{f} \ar[ur]_{\mathbb{D}[l+1]} &
}
\end{equation*}
where $\pi$ is the quotient map. Then $(\ker \pi)^{e+1}=0$, so $\ker(\pi)$ is nilpotent. Since the map $R\to S$ is \'{e}tale, we get a unique $k$-algebra map
$$S\to S[\mathbf{X}]/(X_1^{p^{l+1}},\ldots,X_e^{p^{l+1}})$$
completing the diagram above.
\end{proof}

\begin{remark}\label{defgen}
Proposition \ref{etalehs} enables us to generalize Definition \ref{hstrdef} in the following way.
\begin{enumerate}
\item Since the localization maps are \'{e}tale (see \cite[p. 193]{mat}), any ($m$-truncated) $e$-dimensional HS-derivation on $R$ uniquely extends to an ($m$-truncated) $e$-dimensional HS-derivation on a localization of $R$.

\item By $(1)$, we get a notion of an ($m$-truncated) $e$-dimensional HS-derivation on any scheme over $k$.

\item Proposition \ref{etalehs} generalizes to schemes over $k$.

\item The notion of an ($m$-truncated) $e$-dimensional HS-derivation on a scheme $V$ is a special case of the notion of a \emph{$\underline{\mathcal{D}}$-structure} on a scheme $V$, see \cite{MS}. We will discuss possible generalizations of the results of this paper to the context of \cite{MS} in Section \ref{msecop}.

\item It is easy to generalize the assumptions of Proposition \ref{etalehs} to include the case of ($m$-truncated) $e$-dimensional HS-derivations \emph{from $R$ to $S$}.
\end{enumerate}
\end{remark}
\begin{definition}\label{firstdefstrict}
If $\mathbb{D}$ is an ($m$-truncated) $e$-dimensional HS-derivation on $R$ over $k$, then we define the following.
\begin{enumerate}
\item The \emph{ring of constants} of $(R,\mathbb{D})$ is
$$\ker(D_{(1,0,\ldots,0)})\cap \ldots \cap \ker(D_{(0,\ldots,0,1)}).$$
Clearly, $R^p$ is contained in the ring of constants of $(R,\mathbb{D})$.

\item We call $(R,\mathbb{D})$ \emph{strict}, if the ring of constants of $\mathbb{D}$ coincides with $R^p$.

\item The \emph{ring of absolute constants} of $(R,\mathbb{D})$ is
$$\bigcap_{\mathbf{i}\neq \mathbf{0}}\ker(D_{\mathbf{i}}).$$
\end{enumerate}
\end{definition}
\begin{remark}
If $\mathbb{D}$ is an (resp. $m$-truncated) $e$-dimensional HS-derivation on $R$ over $k$, then $R^{p^{\infty}}$ (resp. $R^{p^m}$) is contained in the ring of absolute constants. It is easy to see (e.g. in the $m$-truncated case) considering $\mathbb{D}:R\to R[\mathbf{v}_m]$ as a ring homomorphism and taking the $p^m$-th power.
\end{remark}

\begin{notation}
The couple $(R,\mathbb{D})$ will be usually denoted by $\mathbf{R}$ and called an ($m$-truncated $e$-dimensional) \emph{HS-ring}. Similarly, we get the notions of \emph{HS-fields}, \emph{HS-extension}, etc.
\end{notation}

\subsection{Group scheme actions}
We introduce a notion which generalizes the notion of an $m$-truncated iterative HS-derivation from \cite{K3}.
\begin{definition}
\begin{enumerate}
\item A \emph{$\mathfrak{g}$-derivation on $V$} is a $k$-group scheme action of $\mathfrak{g}$ on $V$ (see Section 12 in \cite{MuAbel}).

\item A \emph{$\mathfrak{g}$-derivation on $R$} is a $\mathfrak{g}$-derivation on $\spec(R)$.

\item We naturally get the notions of a \emph{$\mathfrak{g}$-ring}, a \emph{$\mathfrak{g}$-field} and a \emph{$\mathfrak{g}$-extension}.
\end{enumerate}
\end{definition}
\begin{remark}\label{gderint}
A $\mathfrak{g}$-derivation on $R$ is the same as an $m$-truncated $e$-dimensional
HS-derivation on $R$ over $k$ satisfying a ``$\mathfrak{g}$-iterativity'' rule. It is easy to see that the trivial action of the unit morphism corresponds
to the condition $D_{\mathbf{0}}=\id_R$ and the diagram expressing the mixed associativity of the $k$-group scheme action $d$ is the following ``$\mathfrak{g}$-iterativity'' diagram
\begin{equation*}
\xymatrix{R \ar[r]^d \ar[d]_d & R[\mathbf{v}_m] \ar[d]^{d[\mathbf{v}_m]}
 \\ R[\mathbf{w}_m]\ar[r]^{c} & R[\mathbf{w}_m,\mathbf{v}_m]}
\end{equation*}
where $\mathbf{w}_m$ is another ``$m$-truncated $e$-tuple of variables'' and $c$ is the Hopf algebra comultiplication given by $\mathfrak{g}$. Therefore for an arbitrary $k$-scheme $V$, any $\mathfrak{g}$-derivation on $V$ is also an $m$-truncated $e$-dimensional HS-derivation on $V$ over $k$ in the sense of Remark \ref{defgen}(2).
\end{remark}

\begin{remark}\label{gderint2}
We will give another interpretation of the $\mathfrak{g}$-iterativity condition. Suppose that the matrix (in the standard basis) of the $k$-linear comultiplication map $c$ from Remark \ref{gderint} has the form $(c_{\mathbf{i},\mathbf{j}}^{\mathbf{k}})$. Suppose also that $\mathbb{D}=(D_{\mathbf{i}})_{\mathbf{i}}$ is an $m$-truncated $e$-dimensional HS-derivation on $R$ over $k$. Then $\mathbb{D}$ is a $\mathfrak{g}$-derivation if and only if for all $\mathbf{i},\mathbf{j}$ we have
$$D_{\mathbf{j}}\circ D_{\mathbf{i}}=\sum_{\mathbf{k}}c_{\mathbf{i},\mathbf{j}}^{\mathbf{k}}D_{\mathbf{k}}.$$
\end{remark}
\noindent
One easily shows the following.
\begin{fact}\label{frometo1}
If $\mathfrak{g}=\mathfrak{g}_1\times \ldots \times \mathfrak{g}_e$ (product of finite group schemes), then we have the following.
\begin{enumerate}
\item For any $\mathfrak{g}$-derivation $\mathbb{D}$ on $R$ and $i\leqslant e$, $\mathbb{D}_i$ from Remark \ref{edimrem} is a $\mathfrak{g}_i$-derivation and we have
$$D_{(i_1,\ldots,i_e)}=D_{1,i_1}\circ \ldots \circ D_{e,i_e}.$$
\item If for any any $i\leqslant e$, $\mathbb{D}_i$ is an $\mathfrak{g}_i$-derivation, and we have $D_{i,i'}\circ D_{j,j'}= D_{j,j'}\circ D_{i,i'}$ for all $i,j\leqslant e$ and $i',j'\in \Nn$, then the formula
$$D_{(i_1,\ldots,i_e)}=D_{1,i_1}\circ \ldots \circ D_{e,i_e}$$
defines a $\mathfrak{g}$-derivation.
\end{enumerate}
\end{fact}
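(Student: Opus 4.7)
The plan is to translate everything into the Hopf-algebra language of Remark \ref{gderint2} and exploit the fact that for $\mathfrak{g}=\mathfrak{g}_1\times\ldots\times\mathfrak{g}_e$, the corresponding Hopf algebra decomposes as the tensor product $H_\mathfrak{g}=H_{\mathfrak{g}_1}\otimes_k\ldots\otimes_k H_{\mathfrak{g}_e}$, whose comultiplication has structure constants factoring as
$$c_{\mathbf{i},\mathbf{j}}^{\mathbf{k}}=\prod_{l=1}^{e}(c_l)_{i_l,j_l}^{k_l}$$
in the standard monomial basis, where $(c_l)$ denotes the comultiplication of $H_{\mathfrak{g}_l}$. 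I also make essential use of the counital identities $(c_l)_{i,0}^{k}=\delta_{i,k}=(c_l)_{0,i}^{k}$, which express that the counit $\epsilon_l$ vanishes on monomials of positive degree.

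For part (1), I first specialize the iterativity of $\mathbb{D}$ to multi-indices $\mathbf{i},\mathbf{j}$ whose supports lie in distinct coordinates $l\neq l'$. The factorization above together with the counital identities forces $c_{\mathbf{i},\mathbf{j}}^{\mathbf{k}}=\delta_{\mathbf{k},\mathbf{i}+\mathbf{j}}$, so the formula of Remark \ref{gderint2} collapses to $D_{\mathbf{j}}\circ D_{\mathbf{i}}=D_{\mathbf{i}+\mathbf{j}}$; the symmetric computation yields $D_{\mathbf{i}}\circ D_{\mathbf{j}}=D_{\mathbf{i}+\mathbf{j}}$ as well. This gives cross-coordinate commutativity of the operators $D_{l,i}$, and an induction on the number of nonzero coordinates of $\mathbf{i}$ produces the factorization $D_{(i_1,\ldots,i_e)}=D_{1,i_1}\circ\ldots\circ D_{e,i_e}$ (the order of composition being immaterial by the commutativity just established). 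Next, to see that $\mathbb{D}_l$ is a $\mathfrak{g}_l$-derivation, I specialize iterativity to $\mathbf{i},\mathbf{j}$ both supported in the $l$-th coordinate; the structure-constant factorization then collapses the sum to indices $\mathbf{k}$ supported in the same coordinate and recovers precisely the $\mathfrak{g}_l$-iterativity for the family $(D_{l,i})_i$. The one-dimensional HS-derivation axioms for $\mathbb{D}_l$ itself are inherited from $\mathbb{D}$ as in Remark \ref{edimrem}(3).

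For part (2), the assumed commutativity $D_{l,i}\circ D_{l',j}=D_{l',j}\circ D_{l,i}$ for $l\neq l'$ makes the formula $D_{(i_1,\ldots,i_e)}:=D_{1,i_1}\circ\ldots\circ D_{e,i_e}$ well-defined and insensitive to the order of composition. To verify that the resulting family is an $m$-truncated $e$-dimensional HS-derivation I check the conditions of Remark \ref{edimrem}(2): $D_{\mathbf{0}}=\id_R$ follows because each $D_{l,0}=\id_R$, $k$-linearity is inherited from the factors, and the Leibniz rule
$$D_{\mathbf{i}}(xy)=\sum_{\mathbf{j}+\mathbf{k}=\mathbf{i}}D_{\mathbf{j}}(x)D_{\mathbf{k}}(y)$$
is obtained by iterating the one-dimensional Leibniz rules for the $\mathbb{D}_l$ and invoking commutativity to group the resulting terms; equivalently, the map in question is the $e$-fold composition of $k$-algebra extensions of the $\mathbb{D}_l$ described in Remark \ref{edimrem}(4). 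For the $\mathfrak{g}$-iterativity, I rewrite $D_{\mathbf{j}}\circ D_{\mathbf{i}}$, using cross-coordinate commutativity, as the composition $\bigl(D_{1,j_1}\circ D_{1,i_1}\bigr)\circ\ldots\circ\bigl(D_{e,j_e}\circ D_{e,i_e}\bigr)$, apply the $\mathfrak{g}_l$-iterativity of each $\mathbb{D}_l$ to replace the $l$-th factor by $\sum_{k_l}(c_l)_{i_l,j_l}^{k_l}D_{l,k_l}$, expand the product, and identify the result with $\sum_{\mathbf{k}}c_{\mathbf{i},\mathbf{j}}^{\mathbf{k}}D_{\mathbf{k}}$ via the structure-constant factorization.

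The main obstacle is largely bookkeeping: both directions require moving operators past one another, and one must verify that the commutativity either derived in (1) or assumed in (2) is strong enough to make the structure-constant factorization for the product comultiplication line up with the iterated one-dimensional rewrites. Once that alignment is in place, both claims reduce to a direct calculation.
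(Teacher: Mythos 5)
Your proof is correct. The paper offers no proof of this Fact at all (it is introduced only with ``One easily shows the following''), so there is no argument of the authors' to compare against; your computation --- factoring the structure constants of the product Hopf algebra as $c_{\mathbf{i},\mathbf{j}}^{\mathbf{k}}=\prod_l (c_l)_{i_l,j_l}^{k_l}$ and using the counit identities to collapse the iterativity sums for multi-indices with disjoint supports --- is a clean and complete way to supply the omitted argument, including the induction on the support of the multi-index needed for the full factorization in (1) and the regrouping via cross-coordinate commutativity in (2).
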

\begin{example}\label{gitex}
We give below several examples of $\mathfrak{g}$-iterativity rules.
\begin{enumerate}
\item A sequence of $e$ commuting iterative $m$-truncated HS-derivations from \cite{K3} is the same as a $\ga^e[m]$-derivation (see Fact \ref{frometo1}).

\item Let $G$ be the unipotent algebraic group of dimension $2$ given by the cocycle
$$\frac{(X+Y)^p-X^p-Y^p}{p},$$
see \cite[p. 171]{serre1988algebraic}.
\\
Assume that $p=2$ and $\mathbb{D}$ is an $m$-truncated $2$-dimensional HS-derivation. Then $\mathbb{D}$ is a $G[m]$-derivation if and only if
$$D_{(k,l)}\circ D_{(i,j)}=\sum\limits_{t=0}^{\min(j,l)}\frac{(i+k+t)!}{i!k!t!}
\frac{(j+l-2t)!}{(j-t)!(l-t)!}\;\; D_{(i+k+t,j+l-2t)}.$$
In particular, we have the following formulas which actually describe the $G[m]$-iterativity rule fully:
$$D_{(0,j)}\circ D_{(i,0)}=D_{(i,j)}=D_{(i,0)}\circ D_{(0,j)},$$
$$D_{(k,0)}\circ D_{(i,0)}={i+k\choose i}D_{(i+k,0)},$$
$$D_{(0,1)}\circ D_{(0,i)}=(i+1)D_{(0,i+1)}+D_{(1,i-1)}.$$
The first author described in \cite{Hoff2} a modification of the theory of separably closed fields with higher derivations from \cite{MeWo} using the iterativity rules coming from algebraic groups similar to the one considered here (groups of Witt vectors).

\item Let $G$ be $\ga\rtimes \gm$, where the group operation on $\gm$ is given by $X+Y+XY$. Hence the group operation on $G$ is given by
$$(X_1,Y_1)*(X_2,Y_2)=(X_1+X_2+Y_1X_2,Y_1+Y_2+Y_1Y_2).$$
Let $\mathbb{D}$ be an $m$-truncated $2$-dimensional HS-derivation. Then $\mathbb{D}$ is a $G[m]$-derivation if and only if
$$D_{(k,l)}\circ D_{(i,j)} = \sum\limits_{t=0}^{\min(k,j)}\sum\limits_{s=0}^{\min(l,j-t)}
\frac{(i+k)!}{i!(k-t)!t!}\;\frac{(j+l-t-s)!}{(j-t-s)!(l-s)!s!}\;\; D_{(i+k,j+l-t-s)}.$$
In particular, we have
$$D_{(0,l)}\circ D_{(i,0)}=D_{(i,l)}.$$
But the above formula does not apply for the other choice of coordinates
$$D_{(1,0)}\circ D_{(0,1)}=D_{(1,1)}+D_{(1,0)}\neq D_{(1,1)}=D_{(0,1)}\circ D_{(1,0)}.$$
We also have the following ``additive coordinate'' rule
$$D_{(1,0)}\circ D_{(i,0)}=(i+1)D_{(1+i,0)}=D_{(i,0)}\circ D_{(1,0)},$$
and the ``multiplicative coordinate'' rule
$$D_{(0,1)}\circ D_{(0,i)}=(i+1)D_{(0,i+1)}+iD_{(0,i)}=D_{(0,i)}\circ D_{(0,1)}.$$
\end{enumerate}
\end{example}
\begin{remark}\label{onedimenough}
We see that all the $e$-dimensional HS-derivations in the example above are determined by the 1-dimensional HS-derivations $\mathbb{D}_1,\ldots,\mathbb{D}_e$ from Remark \ref{edimrem}(3). It may be shown (using Lemma \ref{newton} below) that this is the case for an arbitrary $F$-derivation (or a $\mathfrak{g}$-derivation).
\end{remark}\noindent
We will need more precise information about the ``structural constants'' from Remark \ref{gderint2}.
\begin{lemma}\label{newton}
Let $c_{\mathbf{i},\mathbf{j}}^{\mathbf{k}}$ be as in Remark \ref{gderint2}. For a tuple of natural numbers $\mathbf{n}=(n_1,\ldots,n_e)$, the sum $n_1+\ldots+n_e$ is denoted by $|\mathbf{n}|$. Then we have the following.
\begin{enumerate}
\item If $|\mathbf{k}|>|\mathbf{i}|+|\mathbf{j}|$, then $c_{\mathbf{i},\mathbf{j}}^{\mathbf{k}}=0$.

\item If $|\mathbf{k}|=|\mathbf{i}|+|\mathbf{j}|$ and $\mathbf{k}\neq \mathbf{i}+\mathbf{j}$, then $c_{\mathbf{i},\mathbf{j}}^{\mathbf{k}}=0$.

\item If $\mathbf{k}=\mathbf{i}+\mathbf{j}$, then
$$c_{\mathbf{i},\mathbf{j}}^{\mathbf{k}}={i_1+j_1 \choose i_1}\cdot \ldots \cdot {i_e+j_e \choose i_e}.$$
\end{enumerate}
\end{lemma}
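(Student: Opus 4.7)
The statement is really about the behaviour of $c$ modulo higher-order terms in the augmentation ideal, so the natural approach is first to pin down $c$ modulo degree $\geq 2$ and then exploit that $c$ is a $k$-algebra homomorphism.

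Denote by $\mathbf{w}$ and $\mathbf{v}$ the two tuples of variables in $k[\mathbf{w}_m,\mathbf{v}_m]\cong k[\mathbf{v}_m]\otimes_k k[\mathbf{v}_m]$. The two counit axioms $(\epsilon\otimes\id)\circ c=\id$ and $(\id\otimes\epsilon)\circ c=\id$, evaluated on the generator $X_l$, read $c(X_l)|_{\mathbf{v}=0}=W_l$ and $c(X_l)|_{\mathbf{w}=0}=V_l$. Hence $c(X_l)-W_l-V_l$ vanishes on both coordinate hyperplanes, so every monomial in it contains at least one $W_i$ factor and at least one $V_j$ factor, which forces total $(\mathbf{w},\mathbf{v})$-degree $\geq 2$. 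In particular, the degree-one part of $c(X_l)$ is exactly $W_l+V_l$.

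Since $c$ is a $k$-algebra homomorphism, $c(\mathbf{X}^{\mathbf{k}})=\prod_{l=1}^{e} c(X_l)^{k_l}$. When we expand this product, any term picked by choosing the degree-$\geq 2$ contribution in at least one factor has total degree strictly greater than $|\mathbf{k}|=\sum_l k_l$, so all terms of total degree exactly $|\mathbf{k}|$ arise from
$$\prod_{l=1}^{e}(W_l+V_l)^{k_l}\;=\;\sum_{\mathbf{i}+\mathbf{j}=\mathbf{k}}\;\prod_{l=1}^{e}\binom{k_l}{i_l}\,\mathbf{w}^{\mathbf{i}}\mathbf{v}^{\mathbf{j}}.$$
The truncation relations $W_i^{p^m}=0=V_j^{p^m}$ are harmless here: they may kill some higher-degree monomials but never lower the degree of any term, and on the right-hand side above all exponents are $<p^m$ since $\mathbf{k}\in[p^m]^e$. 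Reading off coefficients now yields all three assertions simultaneously: no monomial of total degree $<|\mathbf{k}|$ appears, giving (1); at total degree exactly $|\mathbf{k}|$ the only surviving contributions come from pairs $(\mathbf{i},\mathbf{j})$ with $\mathbf{i}+\mathbf{j}=\mathbf{k}$, giving (2); and their coefficients are $\prod_l\binom{k_l}{i_l}=\prod_l\binom{i_l+j_l}{i_l}$, giving (3).

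The only delicate point is ensuring that the counit axioms give the right kind of degree control, namely that $c(X_l)-W_l-V_l$ lies in the product of the two augmentation ideals rather than merely in their sum. This is precisely what applying the two counit identities separately delivers. Neither coassociativity nor the antipode of $\mathfrak{g}$ plays any role in the argument, so the same proof in fact describes the comultiplication of an arbitrary Hopf algebra (truncated or not) over $k$ to lowest order in the augmentation ideal.
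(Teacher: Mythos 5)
Your proof is correct and follows essentially the same route as the paper: both arguments reduce to the fact that $c(X_l)=W_l+V_l+(\text{terms in the product of the two augmentation ideals})$ and then expand $\prod_l c(X_l)^{k_l}$ and compare coefficients of $\mathbf{w}^{\mathbf{i}}\mathbf{v}^{\mathbf{j}}$. The only difference is that the paper obtains the first-order form of $c$ by quoting a truncated version of Hazewinkel's formula (14.1.1) for formal group laws, whereas you derive it directly from the two counit axioms --- a slightly more self-contained justification that, as you note, applies verbatim to an arbitrary truncated group scheme $\mathfrak{g}$, which is the generality in which the lemma is stated.
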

\begin{proof}
It is clear for $\text{\textbf{i}}=\mathbf{0}$ or $\text{\textbf{j}}=\mathbf{0}$, so we assume that $\mathbf{i},\mathbf{j}\neq \mathbf{0}$. By a truncated version of the formula \cite[(14.1.1)]{Hazew}, we have (in the notation of Remark \ref{gderint})
$$c(\mathbf{w}_m)=\mathbf{w}_m+\mathbf{v}_m+\mathbf{s}_m,$$
where $\mathbf{s}_m=(S_1,\ldots,S_e)$ for some $S_1,\ldots,S_e$ belonging to the ideal $(\mathbf{w}_m\cdot \mathbf{v}_m)$. Therefore for every $r\in R$ we have
$$\sum_{\mathbf{i},\mathbf{j}} D_{\mathbf{j}}D_{\mathbf{i}}(r)\mathbf{v}_m^{\mathbf{i}}\mathbf{w}_m^{\mathbf{j}}=
\sum_{\mathbf{k}} D_{\mathbf{k}}(r)(\mathbf{w}_m+\mathbf{v}_m+\mathbf{s}_m)^{\mathbf{k}}.$$
We get the result by comparing the coefficients at $\mathbf{v}_m^{\mathbf{i}}\mathbf{w}_m^{\mathbf{j}}$.
\end{proof}
\begin{remark}
 Note that for every $\mathbf{i}, \mathbf{j}\neq \mathbf{0}$ we have the following:

 $$D_{\mathbf{j}}\circ D_{\mathbf{i}}={i_1+j_1 \choose i_1}\cdot\ldots\cdot{i_e+j_e \choose i_e}D_{\mathbf{i}+\mathbf{j}}
 +\mathcal{O}(D_{\mathbf{n}})_{0<|\mathbf{n}|<|\mathbf{i}+\mathbf{j}|},$$
 where $\mathcal{O}(D_{\mathbf{n}})_{0<|\mathbf{n}|<|\mathbf{i}+\mathbf{j}|}$ is a $k$-linear combination of
 $D_{\mathbf{n}}$ for $0<|\mathbf{n}|<|\mathbf{i}+\mathbf{j}|$. We consider the quantity $\mathcal{O}(\cdot)$ as a ``disturbance from the additive iterativity'', because for the additive iterativity condition this linear combination is always zero. Lemma 3.13 from \cite{HK1} regards the case of $e=1$.
\end{remark}
\begin{lemma}\label{absconst}
If $\mathbf{R}=(R,\mathbb{D})$ is a $\mathfrak{g}$-ring, then the ring of constants of $\mathbf{R}$ coincides with the ring of absolute constants of $(R,\mathbb{D}[1])$.
\end{lemma}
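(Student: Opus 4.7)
The inclusion of the ring of absolute constants of $(R,\mathbb{D}[1])$ into the ring of constants of $\mathbf{R}$ is immediate, since the operators $D_{(1,0,\ldots,0)},\ldots,D_{(0,\ldots,0,1)}$ defining the latter form a subfamily of the operators defining the former.

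For the reverse inclusion, the plan is to fix an $r\in R$ satisfying $D_{(0,\ldots,1,\ldots,0)}(r)=0$ for each of the $e$ standard basis multi-indices, and to prove by strong induction on $|\mathbf{i}|$ that $D_\mathbf{i}(r)=0$ for every $\mathbf{i}\in[p]^e\setminus\{\mathbf{0}\}$. Writing $\mathbf{e}_j$ for the $j$-th standard basis multi-index, the case $|\mathbf{i}|=1$ is the hypothesis. For the inductive step with $|\mathbf{i}|\geq 2$, I would pick a coordinate $j$ with $i_j\geq 1$, set $\mathbf{i}':=\mathbf{i}-\mathbf{e}_j\in[p]^e\setminus\{\mathbf{0}\}$, and analyze $D_{\mathbf{e}_j}\circ D_{\mathbf{i}'}$ via a suitable iterativity identity.

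The key observation is that $\mathbb{D}[1]$ is itself a $\mathfrak{g}[1]$-derivation, because the quotient $k[\mathbf{v}_m]\twoheadrightarrow k[\mathbf{v}_1]$ is a morphism of Hopf algebras; thus repeating the proof of Lemma~\ref{newton} with the comultiplication $c[1]$ of $\mathfrak{g}[1]$---which still has the form $c[1](\mathbf{w}_1)=\mathbf{w}_1+\mathbf{v}_1+\mathbf{s}_1$ with $\mathbf{s}_1\in(\mathbf{w}_1\cdot\mathbf{v}_1)$---yields an identity of the form
$$D_{\mathbf{e}_j}\circ D_{\mathbf{i}'}\;=\;i_j\,D_{\mathbf{i}}\;+\sum_{\substack{\mathbf{n}\in[p]^e\\ 0<|\mathbf{n}|<|\mathbf{i}|}} a_\mathbf{n}\,D_\mathbf{n},$$
in which the leading coefficient $\binom{i_1}{i'_1}\cdots\binom{i_e}{i'_e}=i_j$ lies in $\{1,\ldots,p-1\}\subseteq k^\times$. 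Evaluating at $r$, the left-hand side vanishes by the inductive hypothesis applied to $\mathbf{i}'$, each $D_\mathbf{n}(r)$ on the right vanishes by the inductive hypothesis applied termwise, and dividing by the invertible scalar $i_j$ gives $D_\mathbf{i}(r)=0$, completing the induction.

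The step most worth being careful about is the confinement of the auxiliary multi-indices $\mathbf{n}$ to $[p]^e$. A direct application of Remark~\ref{gderint2} to the original $\mathfrak{g}$-derivation $\mathbb{D}$ would in general produce terms $D_\mathbf{n}$ with $\mathbf{n}\in[p^m]^e$ whose individual coordinates may exceed $p-1$, and the induction says nothing about $D_\mathbf{n}(r)$ for such $\mathbf{n}$. Carrying out the iteration at the level of $\mathfrak{g}[1]$---equivalently, using the comultiplication induced on $k[\mathbf{v}_1]$ in place of the one on $k[\mathbf{v}_m]$---keeps all auxiliary multi-indices in $[p]^e$, and this is also the underlying reason the lemma relates the constants of $\mathbf{R}$ to the absolute constants of $\mathbb{D}[1]$ rather than of $\mathbb{D}$ itself.
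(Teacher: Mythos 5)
Your proof is correct and follows essentially the same route as the paper: the paper's one-line argument also invokes Lemma~\ref{newton} to write each $D_{\mathbf{i}}$ with $\mathbf{i}\in[p]^e\setminus\{\mathbf{0}\}$ in terms of compositions of $D_{(1,0,\ldots,0)},\ldots,D_{(0,\ldots,0,1)}$, the invertibility of the leading binomial coefficient being the key point in both versions. Your extra care in passing to $\mathfrak{g}[1]$ so that all auxiliary multi-indices stay in $[p]^e$ is precisely the detail the paper leaves implicit, and it is the right way to make the induction close.
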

\begin{proof}
It follows from Lemma \ref{newton}, that for any $\mathbf{i}\neq \mathbf{0}$, $D_{\mathbf{i}}$ is a $k$-linear combination of the compositions of the derivations $D_{(1,0,\ldots,0)},\ldots,D_{(0,\ldots,0,1)}$ which gives the result.
\end{proof}
\noindent
We comment below on a related notion of a restricted Lie algebra action (see \cite{Tyc}).
\begin{remark}\label{plie}
For $m=1$, any finite group scheme of the form considered in this paper (i.e. any finite group scheme of the \emph{Frobenius height one}) is equivalent to a \emph{restricted Lie algebra} in the sense of the theorem on page 139 of \cite{MuAbel}. Hence a $\mathfrak{g}$-derivation ($m=1$) on $R$ is equivalent to an action on $R$ of $e$ derivations satisfying the commutator and the $p$-th composition rules given by the corresponding restricted Lie algebra $\lie(\mathfrak{g})$ (see \cite{MuAbel}).
\end{remark}
\noindent
We need to know that the unique extension in Proposition \ref{etalehs} preserves the $\mathfrak{g}$-iterativity condition.
\begin{prop}\label{etalegext}
 Assume that $R\to S$ is  \'{e}tale and $\mathbb{D}$ is a $\mathfrak{g}$-derivation on $R$. Then the unique extension of $\mathbb{D}$ to $S$ in Proposition \ref{etalehs} is a $\mathfrak{g}$-derivation.
\end{prop}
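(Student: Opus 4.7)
The strategy is to replay the uniqueness argument of Proposition \ref{etalehs} one level up, using the iterativity diagram itself as the object being lifted. Let $\mathbb{D}'$ denote the unique \'{e}tale extension of $\mathbb{D}$ to $S$ produced by Proposition \ref{etalehs}. Form the two $k$-algebra maps
$$\alpha:= c\circ \mathbb{D}',\qquad \beta:=\mathbb{D}'[\mathbf{v}_m]\circ \mathbb{D}',$$
both going $S\to S[\mathbf{w}_m,\mathbf{v}_m]$, and one wants to show $\alpha=\beta$; this is exactly the $\mathfrak{g}$-iterativity of $\mathbb{D}'$ as phrased in Remark \ref{gderint}.

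The first step is to check that $\alpha$ and $\beta$ agree after restriction along $R\to S$. On $R$, $\mathbb{D}'$ restricts to $\mathbb{D}$ (by the uniqueness in Proposition \ref{etalehs}), so the restrictions are $c\circ\mathbb{D}$ and $\mathbb{D}[\mathbf{v}_m]\circ\mathbb{D}$ pushed into $S[\mathbf{w}_m,\mathbf{v}_m]$, which coincide by the $\mathfrak{g}$-iterativity of $\mathbb{D}$ on $R$. Call this common map $\gamma\colon R\to S[\mathbf{w}_m,\mathbf{v}_m]$.

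The second step is to interpret $\alpha$ and $\beta$ as two lifts in a single \'{e}tale lifting problem. Let $\pi\colon S[\mathbf{w}_m,\mathbf{v}_m]\to S$ be the augmentation sending $\mathbf{w}_m,\mathbf{v}_m\mapsto \mathbf{0}$, whose kernel $N$ is nilpotent (indeed $N^{2ep^m}=0$). I claim $\pi\circ\alpha=\pi\circ\beta=\id_S$. For $\beta$ this is immediate: $\mathbb{D}'[\mathbf{v}_m]$ reduces modulo $\mathbf{w}_m$ to $\id_{S[\mathbf{v}_m]}$, and $\mathbb{D}'$ reduces modulo $\mathbf{v}_m$ to $\id_S$. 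For $\alpha$ one uses the Hopf algebra identity $(\epsilon\otimes\epsilon)\circ c=\epsilon$ for the counit $\epsilon$ of $k[\mathbf{v}_m]$: this gives $\pi\circ c = \epsilon_{\mathbf{w}_m}$, and since $\mathbb{D}'$ is a section, $\epsilon_{\mathbf{w}_m}\circ \mathbb{D}'=\id_S$. Thus both $\alpha$ and $\beta$ fit into the commutative square
\begin{equation*}
\xymatrixcolsep{3pc}\xymatrix{R \ar[r]^-{\gamma}\ar[d] & S[\mathbf{w}_m,\mathbf{v}_m] \ar[d]^{\pi} \\ S\ar[r]_-{\id_S}\ar@<0.5ex>[ur]^{\alpha}\ar@<-0.5ex>[ur]_{\beta} & S}
\end{equation*}
as liftings of the identity.

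The final step is to invoke the \'{e}tale lifting property: since $R\to S$ is \'{e}tale and $N=\ker\pi$ is nilpotent, there is at most one $k$-algebra map $S\to S[\mathbf{w}_m,\mathbf{v}_m]$ filling the square (see \cite[p.~193]{mat}). Therefore $\alpha=\beta$, which is the desired $\mathfrak{g}$-iterativity of $\mathbb{D}'$. I do not expect a genuine obstacle here; the only slightly delicate point is bookkeeping the fact that both $\alpha$ and $\beta$ are sections of $\pi$, which is why the \'{e}tale uniqueness can be applied rather than the weaker \'{e}tale existence statement.
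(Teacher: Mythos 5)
Your proof is correct and is essentially the argument the paper invokes: the paper's proof simply defers to the ``moreover'' part of Matsumura's Theorem 27.2, which is exactly this uniqueness-of-\'{e}tale-lifting argument applied to the two legs of the iterativity square. Your writeup usefully makes explicit the one point the paper leaves implicit for a general comultiplication $c$ (that $\pi\circ c\circ\mathbb{D}'=\id_S$ via the counit axiom), but the strategy is the same.
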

\begin{proof}
The proof of the moreover part of \cite[theorem 27.2]{mat} may be applied here, similarly as in the proof of Proposition \ref{etalehs}.
\end{proof}
\begin{remark}\label{etalegextsch}
As before, Proposition \ref{etalegext} easily generalizes to $\mathfrak{g}$-derivations on $k$-schemes.
\end{remark}
\noindent
We prove a version of the ``Wronskian theorem" \cite[Thm. II.1]{kol1} for the case of $\mathfrak{g}$-derivations.
\begin{prop}\label{wronskian}
Let $\mathbf{K}$ be a $\mathfrak{g}[1]$-field and $C$ be its field of constants. Then for any positive integer $l$ and any $x_1,\ldots,x_l\in K$, the elements $x_1,\ldots,x_l$ are linearly independent over $C$ if and only if the rank of the following ``Wronskian matrix"
$$\big(D_{\mathbf{i}}(x_j)\big)_{\mathbf{i}\in [p]^e,j\leqslant l}$$
is strictly smaller than $l$.
\end{prop}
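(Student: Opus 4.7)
The plan is to prove the stated equivalence by the classical Wronskian argument of \cite[Thm.~II.1]{kol1}, adapted to $\mathfrak{g}[1]$-derivations. The key preliminary is that each $D_{\mathbf{i}}$ is $C$-linear, which follows from the Leibniz rule $D_{\mathbf{i}}(cx) = \sum_{\mathbf{j}+\mathbf{k}=\mathbf{i}} D_{\mathbf{j}}(c) D_{\mathbf{k}}(x)$ together with Lemma \ref{absconst}: if $c \in C$, then by the lemma $D_{\mathbf{j}}(c) = 0$ for every $\mathbf{j} \neq \mathbf{0}$, so only the $\mathbf{j} = \mathbf{0}$ term survives. Using this, any nontrivial $C$-linear relation $\sum_j c_j x_j = 0$ immediately produces a nontrivial relation $\sum_j c_j D_{\mathbf{i}}(x_j) = 0$ among the columns of the Wronskian matrix for every $\mathbf{i} \in [p]^e$, pinning the rank strictly below $l$.

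For the other direction I would induct on $l$. The case $l=1$ is handled by the entry at $\mathbf{i} = \mathbf{0}$. For the inductive step, assume the Wronskian rank is strictly less than $l$ and, without loss of generality, that $x_1, \ldots, x_{l-1}$ are $C$-linearly independent (otherwise one is already done by shortening the tuple). By the induction hypothesis the submatrix indexed by the first $l-1$ columns has rank $l-1$, so one can select indices $\mathbf{i}_1, \ldots, \mathbf{i}_{l-1} \in [p]^e$ for which the corresponding $(l-1)\times(l-1)$ block is invertible. The rank deficiency then forces $D_{\mathbf{i}}(x_l) = \sum_{j<l} a_j D_{\mathbf{i}}(x_j)$ for certain $a_j \in K$ and every $\mathbf{i} \in [p]^e$, and evaluating at $\mathbf{i} = \mathbf{0}$ gives $x_l = \sum_{j<l} a_j x_j$.

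The main step is to upgrade the scalars $a_j$ from $K$ to $C$, thereby exhibiting (or witnessing the absence of) the stated $C$-linear dependence. Apply $D_{\mathbf{i}}$ for $\mathbf{i} \neq \mathbf{0}$ to $x_l = \sum_j a_j x_j$, expand via Leibniz, and subtract the known column relation to obtain
$$\sum_{j<l} \sum_{\mathbf{0} \neq \mathbf{j} \leqslant \mathbf{i}} D_{\mathbf{j}}(a_j)\, D_{\mathbf{i}-\mathbf{j}}(x_j) = 0$$
for every such $\mathbf{i}$. Induction on $|\mathbf{i}|$ combined with the invertibility of the selected $(l-1)\times(l-1)$ Wronskian block of $x_1,\ldots,x_{l-1}$ forces $D_{\mathbf{j}}(a_j) = 0$ for every $\mathbf{j} \neq \mathbf{0}$, hence $a_j \in C$ by Lemma \ref{absconst}. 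The main obstacle is organizing this final induction cleanly, but it is routine because the Leibniz rule is unaffected by $\mathfrak{g}$-iterativity (which constrains only the compositions $D_{\mathbf{j}} \circ D_{\mathbf{i}}$ via the structural constants $c_{\mathbf{i},\mathbf{j}}^{\mathbf{k}}$ of Lemma \ref{newton}, not products), so the classical Wronskian argument transfers essentially verbatim to the $\mathfrak{g}[1]$-setting.
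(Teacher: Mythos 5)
Your forward direction is fine and matches the paper (Leibniz plus Lemma \ref{absconst} gives $C$-linearity of each $D_{\mathbf{i}}$). The gap is in the backward direction, at the ``upgrade the scalars'' step. After subtracting the column relation from the Leibniz expansion you have, for each $\mathbf{i}\in[p]^e$, one equation
\[
\sum_{j<l}\ \sum_{\mathbf{0}\neq\mathbf{j}',\ \mathbf{j}'+\mathbf{k}=\mathbf{i}} D_{\mathbf{j}'}(a_j)\,D_{\mathbf{k}}(x_j)=0 .
\]
That is a system of at most $p^e-1$ nontrivial equations in the $(l-1)(p^e-1)$ quantities $D_{\mathbf{j}'}(a_j)$; for $l\geqslant 3$ it is underdetermined, and no induction on $|\mathbf{i}|$ together with invertibility of the selected block can force all of these quantities to vanish. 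Concretely, take $p=2$, $e=1$, $m=1$, $K=\Ff_2(x,y)$ and the $1$-truncated HS-derivation given by the derivation $D_1=\partial_x+y\,\partial_y$. Then $C=\ker D_1=K^2$, the elements $x_1=1$, $x_2=x$ are $C$-independent, and the block $\big(D_i(x_j)\big)_{i\in\{0,1\},\,j\in\{1,2\}}$ is unipotent upper-triangular, hence invertible. With $x_3=y$ the full $2\times 3$ Wronskian has rank $2<3$, and the column relation is $y=a_1\cdot 1+a_2\cdot x$ with $a_1=y+xy$, $a_2=y$. Your displayed equation (only $\mathbf{i}=1$ is nontrivial) reads $D_1(a_1)\cdot 1+D_1(a_2)\cdot x=xy+yx=0$: it is satisfied, yet $D_1(a_1)=xy\neq 0$ and $D_1(a_2)=y\neq 0$. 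So the inference you rely on is invalid.

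The underlying misconception is your closing claim that the classical argument ``transfers essentially verbatim'' because iterativity only constrains compositions and not products. It is exactly the compositions that Kolchin's argument needs: one applies a derivation $\delta$ to \emph{each} relation $\theta(x_{r+1})=\sum_s c_s\theta(x_s)$ and must then rewrite $\delta\circ\theta$ in terms of operators of the family, so that those terms cancel against the other relations. The truncated family $\{D_{\mathbf{i}}\}_{\mathbf{i}\in[p]^e}$ is not closed under composition, and $\mathfrak{g}$-iterativity is precisely the ingredient that repairs this: the paper applies $D_{\mathbf{1}}=D_{(1,0,\ldots,0)}$ to every relation $\sum_s c_sD_{\mathbf{i}}(x_s)=0$, expands $D_{\mathbf{1}}\circ D_{\mathbf{i}}=\sum_{\mathbf{l}}c^{\mathbf{l}}_{\mathbf{1},\mathbf{i}}D_{\mathbf{l}}$ with \emph{constant} structure coefficients (Remark \ref{gderint2}), kills those terms using the relations again, and is left with $\sum_{s\leqslant r}D_{\mathbf{1}}(c_s)D_{\mathbf{i}}(x_s)=0$ for all $\mathbf{i}$ --- now only $r$ unknowns against $p^e$ equations, so the invertible block forces $D_{\mathbf{1}}(c_s)=0$, and similarly for the other coordinate derivations. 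Without iterativity the proposition is actually false: in the example above $[K:C]=4>p^e=2$ (compare Corollary \ref{dimconstants}, which needs iterativity), and $1,x,y$ are $C$-independent although the Wronskian has rank $<3$. Hence any correct proof of this direction must invoke the iterativity identity, which your argument never does.
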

\begin{proof}
Assume that $x_l=c_1x_1+\ldots+c_{l-1}x_{l-1}$ for some $c_1,\ldots, c_{l-1}\in C$. By Lemma \ref{absconst}, each $D_{\mathbf{i}}$ is $C$-linear. Hence we obtain that the rank of our Wronskian matrix is smaller than $l$ as in the standard case (see \cite[Thm. II.1]{kol1}).
\\
Let the rank of the matrix $\big(D_{\mathbf{i}}(x_j)\big)_{\mathbf{i}\in [p]^e,j\leqslant l}$ be equal to $r<l$. After reordering $x_1,\ldots,x_l$, we may assume that the matrix $\big(D_{\mathbf{i}}(x_j)\big)_{\mathbf{i}\in [p]^e,j\leqslant r}$ has rank $r$. Therefore there exist $\lambda_1,\ldots, \lambda_{r+1}\in K$, not all equal to $0$, such that for each tuple $\mathbf{k}$ we have
\begin{equation}
\sum\limits_{s=1}^{r+1}\lambda_sD_{\mathbf{k}}(x_s)=0.\tag{$*$}
\end{equation}
\noindent
Reordering and dividing by $c_{r+1}$ if need be, we may assume that $c_{r+1}=1$. By Remark \ref{gderint2}, there are $c_{\mathbf{i},\mathbf{j}}^{\mathbf{k}}\in C$ such that for all tuples $\mathbf{i},\mathbf{j}$ we have
$$D_{\mathbf{j}}\circ D_{\mathbf{i}}=\sum_{\mathbf{k}}c_{\mathbf{i},\mathbf{j}}^{\mathbf{k}}D_{\mathbf{k}}.$$
Hence for $\mathbf{1}:=(1,0,\ldots,0)$ and every tuple $\mathbf{i}$ we get (using $(*)$ with $\mathbf{k}=\mathbf{1}$ for the last equality)
\begin{IEEEeqnarray*}{rCl}
0 & = & \sum\limits_{s=1}^{r+1}D_{\mathbf{1}}(\lambda_s)D_{\mathbf{i}}(x_s)+\sum\limits_{s=1}^{r+1}\lambda_sD_{\mathbf{1}}(D_{\mathbf{i}}(x_s))  \\
 & = & \sum\limits_{s=1}^{r}D_{\mathbf{1}}(\lambda_s)D_{\mathbf{i}}(x_s)+\sum\limits_{\mathbf{l}}c_{\mathbf{1},\mathbf{i}}^{\mathbf{l}}\sum\limits_{s=1}^{r+1}\lambda_sD_{\mathbf{l}}(x_s) \\
 & = & \sum\limits_{s=1}^{r}D_{\mathbf{1}}(\lambda_s)D_{\mathbf{i}}(x_s).
\end{IEEEeqnarray*}
Thus $D_{(1,0,\ldots,0)}(\lambda_s)=0$ for every $s\leqslant r$. After similar reasoning for the derivations $D_{(0,1,0,\ldots,0)},\ldots,D_{(0,\ldots,0,1)}$ we get $\lambda_1,\ldots,\lambda_{r+1}\in C$. By setting $\mathbf{k}=\mathbf{0}$ in $(*)$, we obtain that $x_1,\ldots,x_l$ are linearly dependent over $C$.
\end{proof}
\noindent
The next result generalizes the first part of \cite[Lemma 2.1]{Zieg2} (the second part is generalized in Proposition \ref{constantdisjoint}(1)). For a group scheme action interpretation and more comments, see Remark \ref{actioncom}.
\begin{cor}\label{dimconstants}
Let $\mathbf{K}$ be a $\mathfrak{g}$-field and $C$ its field of constants. Then we have
$$[K:C]\leqslant p^e.$$
\end{cor}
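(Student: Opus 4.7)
The plan is to reduce to the order-one truncation $\mathbb{D}[1]$ and then invoke the Wronskian criterion of Proposition \ref{wronskian}. First, if $\mathbb{D}$ is a $\mathfrak{g}$-derivation on $K$, its truncation $\mathbb{D}[1]$ is a $\mathfrak{g}[1]$-derivation: the structural constants $c_{\mathbf{i},\mathbf{j}}^{\mathbf{k}}$ for indices $\mathbf{i},\mathbf{j},\mathbf{k}\in[p]^e$ are inherited from those of $\mathfrak{g}$, so the iterativity identities of Remark \ref{gderint2} restrict accordingly. Moreover, the field of constants $C$ of $\mathbf{K}$ is defined in terms of the first-order operators $D_{(1,0,\ldots,0)},\ldots,D_{(0,\ldots,0,1)}$ alone, so it coincides with the field of constants of $(K,\mathbb{D}[1])$.

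Next, suppose $x_1,\ldots,x_l\in K$ are linearly independent over $C$. Applying Proposition \ref{wronskian} to the $\mathfrak{g}[1]$-field $(K,\mathbb{D}[1])$ (in its correct form: linear independence over $C$ is equivalent to the Wronskian matrix having full rank $l$), we conclude that the matrix
$$\big(D_{\mathbf{i}}(x_j)\big)_{\mathbf{i}\in [p]^e,\; j\leqslant l}$$
has rank $l$. Since this matrix has exactly $|[p]^e|=p^e$ rows, its rank is at most $p^e$, so $l\leqslant p^e$. As this bound is uniform over all $C$-linearly independent tuples from $K$, we obtain $[K:C]\leqslant p^e$.

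The only subtlety is the bookkeeping in the first step, namely that the $\mathfrak{g}$-iterativity of $\mathbb{D}$ truncates to $\mathfrak{g}[1]$-iterativity of $\mathbb{D}[1]$; this is immediate from the fact that $\mathfrak{g}[1]$ sits inside $\mathfrak{g}$ as the kernel of $\fr_{\mathfrak{g}}$ and from the compatibility of the comultiplication with the Hopf-algebra quotient $H\to H[1]$. Everything else is linear algebra (a matrix has rank at most its number of rows) together with the observation that constants are detected already at the first truncation.
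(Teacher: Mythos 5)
Your proof is correct and is essentially the paper's own argument: both rest on the observation that the Wronskian matrix of Proposition \ref{wronskian} has only $p^e$ rows, hence rank at most $p^e$, so no $p^e+1$ elements can be $C$-independent (you state this contrapositively; the paper takes $l=p^e+1$ elements and concludes dependence). Your extra care in checking that $\mathbb{D}[1]$ is a $\mathfrak{g}[1]$-derivation with the same constants, and in reading Proposition \ref{wronskian} in its corrected form (``dependent iff rank $<l$''), only makes explicit what the paper's proof uses tacitly.
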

\begin{proof}
Let $l=p^e+1$ and $x_1,\ldots,x_l\in K$. The rank of the corresponding Wronskian matrix from Proposition \ref{wronskian} is at most $p^e$, since there are only $p^e$ operators of the form $D_{\mathbf{i}}$. By Proposition \ref{wronskian}, $x_1,\ldots,x_l$ are linearly dependent over $C$.
\end{proof}
\noindent
We can generalize now the appropriate results from \cite{Zieg2} to the context of $\mathfrak{g}$-derivations. Let $K$ be a field of characteristic $p$. Then $[K:K^p]=p^l$, where $l\in \Nn\cup \{\infty\}$. We call $l$ the \emph{degree of imperfection} of $K$. For the definition and properties of separable algebras/field extensions the reader is referred to \cite[Sect. 26]{mat}. The next result is a crucial characterization of the \'{e}tale extensions which we will need for the quantifier elimination results in Section \ref{secmc}.
\begin{lemma}\label{constantdisjoint}
Let $\mathbf{K}\subseteq \mathbf{L}$ be an extension of $\mathfrak{g}$-fields. Let $C_K$ (resp. $C_L$) be the constant field of $\mathbf{K}$ (resp. $\mathbf{L}$). Then we have the following.
\begin{enumerate}
\item The field $K$ is linearly disjoint from $C_L$ over $C_K$.

\item If $\mathbf{K}$ is strict (see Def. \ref{firstdefstrict}(2)), then the extension $K\subseteq L$ is separable.

\item If the extension $K\subseteq L$ is \'{e}tale and $K$ has a finite degree of imperfection, then $\mathbf{K}$ is strict if and only if $\mathbf{L}$ is strict.
\end{enumerate}
\end{lemma}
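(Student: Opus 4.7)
My plan is to prove the three parts in order. Part (1) will be a direct generalization of the classical Kolchin argument for linear disjointness of constants in a differential field extension. Part (2) will then follow quickly from (1) together with MacLane's criterion for separability. Part (3) will combine (1) with standard facts about \'{e}tale extensions of fields of finite degree of imperfection.

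For (1), I would argue by contradiction: suppose $c_1,\ldots,c_n \in C_L$ are $C_K$-linearly independent but $K$-linearly dependent; choose a relation $\sum_i a_i c_i = 0$ with $a_i \in K$ and $n$ minimal, and rescale so that $a_n = 1$. For any $\mathbf{j} \in [p]^e\setminus\{\mathbf{0}\}$, Lemma \ref{absconst} gives $D_{\mathbf{j}'}(c_i) = 0$ for every nonzero $\mathbf{j}' \in [p]^e$, so the HS-Leibniz expansion
$$D_{\mathbf{j}}(a_i c_i) = \sum_{\mathbf{j}'+\mathbf{j}''=\mathbf{j}} D_{\mathbf{j}'}(a_i) D_{\mathbf{j}''}(c_i)$$
collapses to $D_{\mathbf{j}}(a_i)\,c_i$. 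Applying $D_{\mathbf{j}}$ to the original relation will then yield $\sum_{i<n} D_{\mathbf{j}}(a_i)\,c_i = 0$, a shorter $K$-dependence among $C_K$-independent elements. By minimality of $n$, this forces $D_{\mathbf{j}}(a_i) = 0$ for every $i<n$ and every such $\mathbf{j}$, and Lemma \ref{absconst} will then place every $a_i$ in $C_K$, contradicting the $C_K$-independence of the $c_i$.

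For (2), strictness of $\mathbf{K}$ gives $C_K = K^p$, and (1) says $K$ and $C_L$ are linearly disjoint over $K^p$. Restricting this disjointness to the intermediate ring $K^p \subseteq L^p \subseteq C_L$ will show that $K$ and $L^p$ are linearly disjoint over $K^p$, which is precisely MacLane's separability criterion for $K \subseteq L$.

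For (3), I will invoke two facts about an \'{e}tale extension $K \subseteq L$ when $[K:K^p]=p^l$ is finite: the extension is separable (so $K \cap L^p = K^p$), and $[L:L^p] = p^l$ together with $L = K\cdot L^p$ (equivalently, the multiplication map $K \otimes_{K^p} L^p \to L$ is an isomorphism). Assuming $\mathbf{K}$ is strict, (1) yields $[K \cdot C_L : C_L] = [K:C_K] = p^l$, while $L = K \cdot L^p \subseteq K \cdot C_L \subseteq L$ forces $K \cdot C_L = L$; hence $[L:C_L] = p^l = [L:L^p]$, and in the tower $L^p \subseteq C_L \subseteq L$ this collapses to $C_L = L^p$, so $\mathbf{L}$ is strict. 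Conversely, if $\mathbf{L}$ is strict then $C_K = K \cap C_L = K \cap L^p = K^p$ by separability, so $\mathbf{K}$ is strict. The main obstacle here will be extracting from \cite[p.~193]{mat} the \'{e}tale structure equalities $[L:L^p]=[K:K^p]$ and $L = K\cdot L^p$; once these are in hand, everything else is a routine dimension count in a tower of $p$-powers.
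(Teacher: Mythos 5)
Your proof is correct and takes essentially the same route as the paper: parts (2) and (3) coincide with its argument (MacLane's criterion via Frobenius, and the count $[L:L^p]=[K:K^p]=[KC_L:C_L]=[L:C_L]$ in the tower $L^p\subseteq C_L\subseteq L$), while for (1) you run the classical minimal-length-relation argument directly, using Lemma \ref{absconst} exactly where the paper instead invokes its Wronskian Proposition \ref{wronskian} --- the same underlying computation. The \'{e}tale inputs you flag as the remaining obstacle, $[L:L^p]=[K:K^p]$ and $L=KL^p$, are precisely what the paper also extracts from Matsumura (a $p$-basis of $K$ remains a $p$-basis of $L$), so nothing further is needed.
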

\begin{proof}
For $(1)$, we apply Lemma \ref{wronskian} as in the standard case (see  \cite[Cor. 1, p. 87]{kol1}).
\\
The item $(2)$ follows directly from $(1)$ using \cite[Thm. 26.4]{mat}.
\\
The right-to-left implication in $(3)$ is clear (and only the condition $L^p\cap K=K^p$ is used). For the left-to-right implication, the \'{e}tale assumption implies that $[L:L^p]=[K:K^p]$ and $KL^p=L$. Since $L^p\subseteq C_L$, by $(1)$ (used for the second equality below) we get
$$[L:L^p]=[K:K^p]=[KC_L:C_L]=[L:C_L].$$
Since $[L:L^p]$ is finite, we get $C_L=L^p$.
\end{proof}

\subsection{Formal group actions}
Recall that $F$ is a formal group law over $k$ which may be identified with a direct system of finite group schemes over $k$ and $G$ is an algebraic group over $k$.
\begin{definition} We define the following.
\begin{enumerate}
\item An \emph{$F$-derivation} on $V$ is a direct system of $F[m]$-derivations on $V$.

\item A \emph{$G$-derivation} is a $\widehat{G}$-derivation (see Section \ref{gpschemetrun} for the definition of $\widehat{G}$).

\item Similarly we get the notions of an $F$-derivation and a $G$-derivation on $R$.
\end{enumerate}
\end{definition}
\begin{remark}
\begin{enumerate}
\item As in Remarks \ref{gderint}, \ref{gderint2}, any $F$-derivation is an $e$-dimensional HS-derivation which satisfies the \emph{$F$-iterativity law}.


\item Note that a $G$-derivation on $V$ (i.e. a direct system of group scheme actions of $G[m]$ on $V$) is \emph{not} the same as an algebraic action of $G$ on $V$. Clearly, any algebraic action of $G$ on $V$ gives a $G$-derivation by restricting the action of $G$ to $G[m]$ for each $m$. The difference between these two notions is easy to observe for $R=k[t]$ and $G=\ga$. If $\mathbb{D}$ is a $\ga$-derivation on $R$ (i.e. an iterative HS-derivation on $R$), then $\mathbb{D}$ comes from a $\ga$-action on $\spec(R)$ if and only if there is $n$ such that for all $i>n$, we have $D_i(t)=0$.

\end{enumerate}
\end{remark}
\begin{example}\label{canex}
Let $R=k\llbracket \mathbf{X}\rrbracket$ and $K=k((\mathbf{X}))$. Similarly as in \cite[Section 3.2]{HK1}, we define a \emph{canonical $F$-derivation} on $R$ and $K$. As a $k$-algebra map, it is defined on $R$ as follows
$$\mathbb{D}^F=\ev_F:R\to R\llbracket Y_1,\ldots,Y_e\rrbracket,\ \ \mathbb{D}^F(f)=f(F).$$
By Proposition \ref{etalegext}, $\mathbb{D}^F$ uniquely extends to an $F$-derivation on $K$ which we also call canonical and also denote by $\mathbb{D}^F$. For any $m$, we call $\mathbb{D}^F[m]$ (on $R$ or on $K$) a \emph{canonical $F[m]$-derivation}.
\end{example}
\noindent
We point out here that for a given $\mathfrak{g}$, there may exist non-isomorphic formal groups $F_1,F_2$ such that $F_1[m]=\mathfrak{g}=F_2[m]$.
\begin{prop}\label{canstrict}
The canonical $F$-derivation (equivalently, $F[m]$-derivation) is strict.
\end{prop}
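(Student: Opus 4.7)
The plan is to handle the case of $R = k\llbracket \mathbf{X} \rrbracket$ by a direct chain-rule computation, extend to $K = k((\mathbf{X}))$ by the same argument, and deduce the truncated case $\mathbb{D}^F[m]$ for free, since the first-order components of an HS-derivation are unaffected by truncation as long as $m \geq 1$. Throughout I read off the $D_{\mathbf{i}}$'s from the explicit formula $\mathbb{D}^F(f) = f(F_1(\mathbf{X}, \mathbf{Y}), \ldots, F_e(\mathbf{X}, \mathbf{Y}))$ as the coefficients of $\mathbf{Y}^{\mathbf{i}}$.

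The crucial step is computing the coefficient of $Y_j$ for each $1 \leq j \leq e$. The formal group law axioms $F(\mathbf{X}, \mathbf{0}) = \mathbf{X}$ and $F(\mathbf{0}, \mathbf{Y}) = \mathbf{Y}$ give $F_i(\mathbf{X}, \mathbf{Y}) = X_i + Y_i + S_i$ with $S_i \in (\mathbf{X}) \cdot (\mathbf{Y})$. The chain rule then produces
\[
D_{(0, \ldots, 1, \ldots, 0)}(f) \;=\; \sum_{i = 1}^{e} \frac{\partial f}{\partial X_i}(\mathbf{X}) \cdot M_{ij}(\mathbf{X}), \qquad M_{ij}(\mathbf{X}) := \frac{\partial F_i}{\partial Y_j}(\mathbf{X}, \mathbf{0}),
\]
where the $1$ in the multi-index sits in position $j$. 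The key observation is $M(\mathbf{0}) = I_e$, so $M$ is a unit of $\gl_e(R) \subseteq \gl_e(K)$, and consequently the simultaneous kernel of the $D_{(0, \ldots, 1, \ldots, 0)}$'s agrees with $\bigcap_{i = 1}^{e} \ker(\partial / \partial X_i)$ in both $R$ and $K$.

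Since $k$ is perfect of characteristic $p$ and the monomials $\{\mathbf{X}^{\mathbf{i}} : \mathbf{i} \in [p]^e\}$ form a basis of $R$ over $R^p$ (and, after localizing, of $K$ over $K^p$), a routine monomial argument shows that $\bigcap_i \ker(\partial/\partial X_i) = R^p$ in $R$ and $= K^p$ in $K$. This gives strictness of $\mathbb{D}^F$. For $\mathbb{D}^F[m]$ the first-order operators are unchanged, hence so is the ring of constants, so strictness is inherited. The main obstacle is the bookkeeping that secures $M(\mathbf{0}) = I_e$ from the formal group law axioms, together with the standard identification of $\bigcap_i \ker(\partial/\partial X_i)$ with $R^p$ (respectively $K^p$); neither step is genuinely delicate, and the formal group law axioms do all the serious work.
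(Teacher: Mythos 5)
Your proof is correct and follows the same basic route as the paper: expand $f(F(\mathbf{X},\mathbf{Y}))$ by the chain rule, use the formal group law axiom $F(\mathbf{X},\mathbf{Y})\equiv \mathbf{X}+\mathbf{Y}$ modulo $(\mathbf{X})(\mathbf{Y})$ to control the degree-one coefficients, and identify $\bigcap_i\ker(\partial/\partial X_i)$ with the $p$-th powers. The one genuine difference is that you keep the full Jacobian matrix $M_{ij}=\frac{\partial F_i}{\partial Y_j}(\mathbf{X},\mathbf{0})$ and argue via $M(\mathbf{0})=I_e$, hence $M\in\gl_e(R)$, that the joint kernel of the first-order operators equals $\bigcap_i\ker(\partial/\partial X_i)$; the paper instead writes the single-term formula $\partial_i^F(f)=\frac{\partial f}{\partial X_i}\cdot\frac{\partial F_i}{\partial X_i}(\mathbf{X},\mathbf{0})=\frac{\partial f}{\partial X_i}$, which silently drops the off-diagonal chain-rule contributions $\frac{\partial f}{\partial X_{i'}}\cdot\frac{\partial F_{i'}}{\partial Y_i}(\mathbf{X},\mathbf{0})$ for $i'\neq i$ (and these need not vanish for $e>1$, e.g.\ for the group law of Example \ref{gitex}(3) the diagonal entries are $1+X_2$ rather than $1$). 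So your version is the more careful rendering of the same argument, and the invertibility of $M$ over $k\llbracket\mathbf{X}\rrbracket$ is exactly the point that makes the kernel comparison legitimate; the remaining steps (monomial basis of $R$ over $R^p$ for perfect $k$, passage to $K$, and the observation that truncation does not change the first-order components) are all routine and correctly handled.
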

\begin{proof}
We introduce the following notation:
$$\partial^F_1=D_{(1,0,\ldots,0)},\ldots,\partial^F_e=D_{(0,\ldots,0,1)}.$$
Let $i\leqslant e$ and $f\in K$. By the chain rule, we get
\begin{equation}
\partial^F_i(f)=\Big(\frac{\partial F_i}{\partial Y_j}(\mathbf{X},0)\Big)_{i,j}\cdot \Big(\frac{\partial f}{\partial X_1},\ldots,\frac{\partial f}{\partial X_e}\Big).\tag{$*$}
\end{equation}
Let $J$ denote the Jacobian matrix appearing in $(*)$. Then $J$ is the matrix of the derivative (at $\mathbf{0}$) of the formal map which is the ``$F$-multiplication by $\mathbf{X}$''. Since this formal map is (formally) invertible, the matrix $J$ is non-singular. Therefore $f$ belongs to the field of constants if and only if $\frac{\partial f}{\partial X_i}=0$ for each $i$. The latter condition occurs if and only if $f$ is a $p$-th power, so the result is proved.
\end{proof}
\noindent
In the case when $F=\widehat{G}$, we can define a canonical $F$-derivation on a localization of a $k$-algebra of finite type.
\begin{example}\label{cangex}
Let $\mathcal{O}_G$ be the local ring of $G$ at the identity. Since $G$ is a smooth variety over $k$, $\mathcal{O}_G$ is a regular local ring. Let $\mathbf{x}=(x_1,\ldots,x_e)$ be a sequence of local parameters in $\mathcal{O}_G$. By \cite[Thm 30.6(i)]{mat}, the ring $\widehat{\mathcal{O}}_G$ is the power series ring in the variables $\mathbf{x}$. If $F=\widehat{G}$, then $F(\mathbf{x},\mathbf{Y})\in \mathcal{O}_G\llbracket \mathbf{Y}\rrbracket$ (the group action is algebraic!). Hence $\mathcal{O}_G$ is a $G$-subring (after identifying $\mathbf{x}$ with $\mathbf{X}$) of $(k\llbracket \mathbf{X}\rrbracket,\mathbb{D}^F)$. Therefore the field of rational functions $k(G)$ has a natural $G$-derivation on it which we call the \emph{canonical  $G$-derivation} on $k(G)$.
We also get a canonical $G[m]$-derivation on $k(G)$.
\\
If $G$ is affine, then we also have a canonical $G$-derivation on $k[G]$. The natural extensions
$$k[G]\subseteq K\llbracket \mathbf{X}\rrbracket,\ \ \ k(G)\subseteq K((\mathbf{X})),$$
where the local parameters on $G$ are understood as variables as in \cite[Thm 30.6(i)]{mat}, are $G$-extensions by our construction.
\end{example}

\subsection{Strict $\mathfrak{g}$-derivations and group scheme actions}\label{secstr}
In this subsection we will investigate strict $\mathfrak{g}$-rings using group scheme actions. Let us fix $\mathbb{D}$, a $\mathfrak{g}$-derivation on $V$.
For the notion of a (free) action of a group scheme on a scheme and its (good) quotient, the reader is advised to consult Section 12 of \cite{MuAbel}.
\begin{theorem}\label{quotient}
We have the following.
\begin{enumerate}
\item The quotient scheme $V/\mathfrak{g}$ exists.

\item If $V=\spec(R)$, then $V/\mathfrak{g}=\spec(C_{\mathbb{D}})$, where $C_{\mathbb{D}}$ is the ring of absolute constants of $\mathbf{R}$ (see Definition \ref{firstdefstrict}(3)).
\end{enumerate}
\end{theorem}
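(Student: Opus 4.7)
My plan is to establish (2) first, which is essentially the affine case, and then deduce (1) by a gluing argument.

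For (2), I would interpret the $\mathfrak{g}$-action on $R$ via Hopf duality as a coaction $\mathbb{D}\colon R\to R\otimes_k k[\mathbf{v}_m]$. An element $r\in R$ is $\mathfrak{g}$-invariant (i.e.\ $\mathbb{D}(r)=r\otimes 1$) if and only if $D_{\mathbf{i}}(r)=0$ for every $\mathbf{i}\neq \mathbf{0}$, which by Definition \ref{firstdefstrict}(3) is exactly the condition $r\in C_{\mathbb{D}}$. So $C_{\mathbb{D}}$ is the ring of $\mathfrak{g}$-invariants. The universal property of the quotient then becomes transparent: a morphism $\spec(R)\to \spec(S)$ with the trivial $\mathfrak{g}$-action on the target is $\mathfrak{g}$-equivariant if and only if the dual $k$-algebra map $S\to R$ lands inside $C_{\mathbb{D}}$, and hence factors uniquely through $\spec(C_{\mathbb{D}})\to \spec(R)$. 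Furthermore, because $R^{p^m}\subseteq C_{\mathbb{D}}$ (by the remark following Definition \ref{firstdefstrict}), the extension $C_{\mathbb{D}}\hookrightarrow R$ is integral; combined with the faithful flatness of $k[\mathbf{v}_m]$ over $k$, this yields a finite surjective morphism $\spec(R)\to \spec(C_{\mathbb{D})}$ whose geometric fibres are the $\mathfrak{g}$-orbits, so that $\spec(C_{\mathbb{D}})$ satisfies the axioms of a good quotient in the sense of \cite[Section 12]{MuAbel}.

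For (1), I would reduce to the affine case by covering $V$ with $\mathfrak{g}$-stable affine opens. This is always possible because $\mathfrak{g}$ has finite underlying scheme over $k$: every $\mathfrak{g}$-orbit in $V$ is contained in some affine open, and taking the intersection (or union of translates) of that open under the finitely many ``geometric points'' of $\mathfrak{g}$ produces a $\mathfrak{g}$-stable affine open containing the orbit, exactly as in the proof for finite group actions adapted to the group scheme setting of \cite[Section 12]{MuAbel}. Writing $V=\bigcup_i U_i$ with $U_i=\spec(R_i)$ a $\mathfrak{g}$-stable affine open, part (2) produces affine quotients $U_i/\mathfrak{g}=\spec\big(C_{\mathbb{D}|_{R_i}}\big)$, and these glue because the passage to absolute constants commutes with localization: if $f\in C_{\mathbb{D}}$, then by the uniqueness in Proposition \ref{etalegext} the induced $\mathfrak{g}$-derivation on $R[f^{-1}]$ has $C_{\mathbb{D}}[f^{-1}]$ as its ring of absolute constants.

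The main technical obstacle is the existence of the $\mathfrak{g}$-stable affine cover; once this is granted, everything else reduces to the Hopf-algebraic identification of $C_{\mathbb{D}}$ with the ring of invariants and to the compatibility of invariants with localization. Both points are standard consequences of the finiteness of $\mathfrak{g}$ and of the uniqueness part of Proposition \ref{etalegext}.
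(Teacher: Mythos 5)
Your argument is correct in its essential content and is really the same argument as the paper's, with the black box opened: the paper simply quotes \cite[Thm 1(A), p.~111]{MuAbel} for part (1) and then reads off from the proof of that theorem that in the affine case the quotient is $\spec$ of the invariants $\{r\in R\ :\ \mathbb{D}(r)=r\otimes 1\}$, which is exactly your identification of the invariants with the ring of absolute constants of Definition \ref{firstdefstrict}(3). So the core of part (2) --- the Hopf-dual translation of ``invariant'' into ``killed by all $D_{\mathbf{i}}$, $\mathbf{i}\neq\mathbf{0}$'' and the universal property via factoring the dual algebra map through $C_{\mathbb{D}}$ --- coincides with what the paper does, and your gluing strategy for (1) is precisely Mumford's.

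Three caveats on the extra claims you make along the way. First, the assertion that the geometric fibres of $\spec(R)\to\spec(C_{\mathbb{D}})$ are the orbits and that one gets a \emph{good} quotient is the content of Mumford's Theorem 1(B), which requires the action to be free; it is not part of the statement being proved and should not be asserted for an arbitrary $\mathfrak{g}$-action. Second, integrality of $R$ over $C_{\mathbb{D}}$ does follow from $R^{p^m}\subseteq C_{\mathbb{D}}$ (each $r$ is a root of $X^{p^m}-r^{p^m}$), but \emph{finiteness} of the morphism does not follow without finiteness hypotheses on $R$; fortunately neither finiteness nor goodness is needed for the categorical quotient. Third, your construction of $\mathfrak{g}$-stable affine opens by intersecting translates under the ``finitely many geometric points of $\mathfrak{g}$'' is borrowed from the \'{e}tale case and is a red herring here: the group scheme $\mathfrak{g}=\spec(k[\mathbf{v}_m])$ is infinitesimal, so it has a single point, every orbit is topologically a point, and in fact \emph{every} open subscheme of $V$ is automatically $\mathfrak{g}$-stable (the action map restricted to $\mathfrak{g}\times U$ agrees topologically with the projection to $U$). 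This makes the covering step trivial rather than the ``main technical obstacle'' you identify; the only genuine care needed in the gluing is covering $U_i\cap U_j$ by opens distinguished with respect to the invariant subrings, which works because $D(f)=D(f^{p^m})$ and $f^{p^m}\in C_{\mathbb{D}}$.
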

\begin{proof}
For $(1)$, we quote \cite[Thm 1(A), p. 111]{MuAbel}.
\\
By the proof of \cite[Thm 1(A), p. 111]{MuAbel} we have $V/\mathfrak{g}=\spec(C')$ where
$$C'=\{r\in R\ |\ \mathbb{D}(r)=r\}.$$
Clearly $C'$ coincides with $C_{\mathbb{D}}$ giving $(2)$.
\end{proof}

\begin{remark}\label{actioncom}
Let $\mathbb{D}$ be a $\mathfrak{g}[1]$-derivation on a field $K$ with the field of constants $C$. Using Proposition \ref{wronskian}, it is easy to see that if $x_1,\ldots,x_n\in K$ are linearly independent over $C$, then $\mathbb{D}(x_1),\ldots,\mathbb{D}(x_n)\in K[\mathbf{v}_1]$ are linearly independent over $K$. Therefore the induced $K$-linear map
$$\widetilde{\mathbb{D}}:K\otimes_CK\to K[\mathbf{v}_1],\ \ \ \ \ \ \widetilde{\mathbb{D}}(a_1\otimes b_1+\ldots+a_n\otimes b_n)=a_1\mathbb{D}(b_1)+\ldots+a_n\mathbb{D}(b_n)$$
is an embedding. Since we have
$$\dim_KK\otimes_CK=[K:C]\leqslant p^e,\ \ \ \dim_KK[\mathbf{v}_1]=p^e,$$
the following are equivalent:
\begin{enumerate}
\item $[K:C]=p^e$,

\item the map $\widetilde{\mathbb{D}}$ is onto,

\item the map $\widetilde{\mathbb{D}}$ is an isomorphism.
\end{enumerate}
\noindent
In terms of group scheme actions, the above equivalences mean that the action of $\mathfrak{g}$ on $\spec(K)$ is free if and only if $[K:C]=p^e$, and if this action is free, then the corresponding quotient $\spec(C)=\spec(K)/\mathfrak{g}$ is a good quotient. Note that the general theorem about group scheme actions \cite[Thm 1(B), p. 112]{MuAbel} gives the left-to-right implication above.
\end{remark}
\noindent
We comment below on an interpretation of the notion of strictness using group scheme actions.
\begin{remark}\label{factorstrict}
Let $V^{\fr^m}$ be $V$ twisted by the $m$-th power of the Frobenius automorphism as in Section \ref{gpschemetrun}. From the universal property of quotients, there is a unique morphism $\Psi$ making the following diagram commutative
\begin{equation*}
\xymatrixcolsep{4pc}
\xymatrix{V \ar[d]_{\fr^m_V} \ar[r]^{}& V/\mathfrak{g} \ar[dl]_{\Psi}\\
	   V^{\fr^m} .  }
\end{equation*}
If $V=\spec(R)$ and $m=1$, then $\mathbf{R}$ is strict if and only if $\Psi$ is an isomorphism. It is also easy to see that for a reduced $R$ and arbitrary $m$,  $\mathbf{R}$ is strict if and only if $\Psi$ is an isomorphism.
\end{remark}
\noindent
We will need the following result in Section \ref{sececf}.
\begin{lemma}\label{deronc}
Let $\mathbb{D}$ be a $\mathfrak{g}$-derivation on $R$ and $C$ be the ring of constants of $(R,\mathbb{D})$. Then we have the following.
\begin{enumerate}
\item $C$ is a $\mathfrak{g}$-subring.

\item A $\mathfrak{g}$-action on $\spec(C)$ naturally induces a $\mathfrak{g}[m-1]^{\fr}$-action on $\spec(C)$, hence $C$ is naturally a $\mathfrak{g}[m-1]^{\fr}$-ring.
\end{enumerate}
\end{lemma}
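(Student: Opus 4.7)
The plan is geometric and rests on identifying $\spec(C)$ as a quotient of $V=\spec(R)$ by the Frobenius kernel $\mathfrak{g}[1]\subseteq \mathfrak{g}$. First, I would apply Lemma \ref{absconst} to replace $C$ by the ring of absolute constants of $(R,\mathbb{D}[1])$, viewing $\mathbb{D}[1]$ as a $\mathfrak{g}[1]$-derivation obtained from $\mathbb{D}$ by restricting the $\mathfrak{g}$-action along the closed immersion $\mathfrak{g}[1]\hookrightarrow \mathfrak{g}$. Theorem \ref{quotient}(2), applied to this $\mathfrak{g}[1]$-action, then identifies $\spec(C)$ with the good quotient $V/\mathfrak{g}[1]$, realised on coordinate rings by the inclusion $C\hookrightarrow R$.

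For part (1), I would use that $\mathfrak{g}[1]$ is the kernel of the relative Frobenius morphism $\fr_{\mathfrak{g}}:\mathfrak{g}\to \mathfrak{g}^{(1)}$ (Remark \ref{gm}), hence a closed normal subgroup scheme of $\mathfrak{g}$. The standard descent theorem for group scheme actions along quotients by normal subgroups (see \cite[Section 12]{MuAbel}) then produces a $\mathfrak{g}$-action on $\spec(C)=V/\mathfrak{g}[1]$ fitting into a commutative square with the quotient morphism $V\to \spec(C)$. Translating this square back to coordinate rings shows that $\mathbb{D}:R\to R\otimes_k k[\mathbf{v}_m]$ restricts to a map $C\to C\otimes_k k[\mathbf{v}_m]$ with the required iterativity inherited from $\mathbb{D}$, i.e.\ $C$ is a $\mathfrak{g}$-subring of $R$.

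For part (2), the descended $\mathfrak{g}$-action on $\spec(C)$ is, by its very construction, trivial on $\mathfrak{g}[1]$, because the $\mathfrak{g}[1]$-orbits in $V$ are exactly the fibres of $V\to V/\mathfrak{g}[1]$. Hence this action factors through the quotient group scheme $\mathfrak{g}/\mathfrak{g}[1]$. Applying the first isomorphism theorem to $\fr_{\mathfrak{g}}$ together with the image formula $\fr_{\mathfrak{g}}(\mathfrak{g})=\mathfrak{g}^{(1)}[m-1]$ from Remark \ref{gm}, I would identify $\mathfrak{g}/\mathfrak{g}[1]\cong \mathfrak{g}^{(1)}[m-1]=\mathfrak{g}[m-1]^{\fr}$, which exhibits the $\mathfrak{g}$-action on $\spec(C)$ as induced from a natural $\mathfrak{g}[m-1]^{\fr}$-action.

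The main obstacle will be making the descent of the $\mathfrak{g}$-action through $V/\mathfrak{g}[1]$ rigorous in the scheme-theoretic category. While the abstract intuition — that the action of a group descends to the quotient by a normal subgroup — is clear, the formal construction must pass through the existence and universal property of good quotients, as developed in \cite[Section 12]{MuAbel}. An alternative, purely algebraic route would verify $\mathbb{D}(C)\subseteq C\otimes_k k[\mathbf{v}_m]$ directly from the coassociativity of $\mathbb{D}$ and the characterisation $C=\{r\in R:\mathbb{D}[1](r)=r\otimes 1\}$; this works automatically when $\mathfrak{g}$ is commutative (the structure constants $c_{\mathbf{i},\mathbf{j}}^{\mathbf{k}}$ of Remark \ref{gderint2} are symmetric), but it becomes notationally heavy in the non-commutative case, where the geometric normality of $\mathfrak{g}[1]$ is really the cleanest input.
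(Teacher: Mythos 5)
Your proposal is correct and follows essentially the same route as the paper: identify $\spec(C)$ with $\spec(R)/\mathfrak{g}[1]$ via Theorem \ref{quotient}(2) (with Lemma \ref{absconst} supplying the passage from constants to absolute constants of $\mathbb{D}[1]$), descend the $\mathfrak{g}$-action to this quotient using normality of $\mathfrak{g}[1]$, and then factor it through $\mathfrak{g}/\mathfrak{g}[1]\cong\mathfrak{g}[m-1]^{\fr}$ via Remark \ref{gm}. Your write-up is in fact somewhat more explicit than the paper's about the inputs being used.
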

\begin{proof}
We work on the level of group scheme actions. By Theorem \ref{quotient}(2), we have $\spec(C)=\spec(R)/\mathfrak{g}[1]$. As in the case of the usual group actions (one can work it out on the level of rational points), we get the induced action of $\mathbb{D}$ on $\spec(C)$ (since $\mathfrak{g}[1]$ is normal in $\mathfrak{g}$) giving $(1)$.

For $(2)$, a similar argument gives a natural action of $\mathfrak{g}/\mathfrak{g}[1]$ on $\spec(C)$. By Remark \ref{gm}, we have
$$\mathfrak{g}[m-1]^{\fr}\cong \mathfrak{g}/\mathfrak{g}[1],$$
which proves $(2)$.
\end{proof}
\begin{remark}\label{dpidpj}
We can describe the $\mathfrak{g}[m-1]^{\fr}$-action on $\spec(C)$ from Lemma \ref{deronc}(2) more specifically, since it is given by
$(D_{p\mathbf{j}}|_C)_{\mathbf{j}\in [p^{m-1}]^{\times e}}$. Hence for any $c\in C$ and $\mathbf{i},\mathbf{j}\in [p^{m-1}]^{\times e}$, we get the following:
$$D_{p\mathbf{j}}(D_{p\mathbf{i}}(c))=\sum_{\mathbf{k}\in [p^{m-1}]^{\times e}}\left(c_{\mathbf{i},\mathbf{j}}^{\mathbf{k}}\right)^pD_{p\mathbf{k}}(c).$$
\end{remark}

\noindent
The next result (Proposition \ref{gstrictexist}) is rather easy in the case of full HS-derivations and turned out to be quite problematic for us in the restricted case. The proof follows the lines of the proof of \cite[Fact 2.5]{K3}, but, in the general case here, a different set of values of the highest order operators needs to be taken (see Remark \ref{values}). Firstly, we need a general lemma.
\begin{lemma}\label{hsideal}
For $i\leqslant e$ and $j<m$, we define $(p^j)_i\in [p^m]^{\times e}$ as a sequence consisting of zeroes, except for the $i$-th coordinate where it has $p^j$. Assume $R$ is a $\mathfrak{g}$-ring and let $\mathcal{P}$ be a subset of $[p^m]^{\times e}$ such that
$$\{(p^j)_i\ |\ i\leqslant e,j<m\}\subseteq \mathcal{P}.$$
Then for any $B\subset R$, the ideal generated by the set
$$B_{\mathcal{P}}=\{D_{\mathbf{k}}(b)\ |\ b\in B,\mathbf{k}\in \mathcal{P}\}$$
is an HS-ideal.
\end{lemma}
\begin{proof}
It follows from Remark \ref{gderint2} and Lemma \ref{newton}.
\end{proof}
\begin{prop}\label{gstrictexist}
Any $\mathfrak{g}$-field $\mathbf{K}$ has a strict $\mathfrak{g}$-field extension.
\end{prop}
\begin{proof}
We consider the theory of $\mathfrak{g}$-fields as a universal theory in the language of $\mathfrak{g}$-fields (containing $-$ and $\div$). Then any  $\mathfrak{g}$-field embeds into an existentially closed $\mathfrak{g}$-field. Therefore, it is enough to prove that existentially closed $\mathfrak{g}$-fields are strict, so we may assume that $\mathbf{K}$ is existentially closed. Assume that $\mathbf{K}$ is not strict, i.e. there is $a\in K\setminus K^p$ which is a constant of $\mathbf{K}$. To reach a contradiction (with the assumption that $\mathbf{K}$ is existentially closed), it is enough to find a $\mathfrak{g}$-extension $\mathbf{K}\subseteq \mathbf{L}$ such that $a^{1/p}\in L$. Let $C$ denote the field of constants of $\mathbf{K}$ and $B$ be a $p$-basis of $C$ over $K^p$ such that $a\in B$.
\\
\\
{\bf Claim 1}
\\
There is a $\mathfrak{g}[m-1]$-derivation on $C^{1/p}$ extending the one we have on $K$ and such that for each $b\in B$ and each $\mathbf{j}\in [p^{m-1}]^{\times e}$, we have
$$D_{\mathbf{j}}(b^{1/p})=(D_{p\mathbf{j}}(b))^{1/p}.$$
\begin{proof}[Proof of Claim 1]
By Lemma \ref{deronc}(2) and Remark \ref{dpidpj},
$$\mathbb{D}':=\left(D_{p\mathbf{j}}|_C\right)_{\mathbf{j}\in [p^{m-1}]^{\times e}}$$
is a $\mathfrak{g}[m-1]^{\fr}$-derivation on $C$. Let $\fr_C^{-1}:C\cong C^{1/p}$, and $\mathbb{D}''$ be $\mathbb{D}'$ transported to $C^{1/p}$ using $\fr_C^{-1}$. Then $\mathbb{D}''$ is a $\mathfrak{g}[m-1]$-derivation on $C^{1/p}$ and by the construction it has the required properties (see also Lemma \ref{transport} and the proof of Proposition \ref{strictext}).
\end{proof}
We consider the following rings (the set $B$ indexes the variables):
$$R:=K\left[X_b^{(\mathbf{i})}\ |\ \mathbf{i}\in [p^m]^{\times e},\ b\in B\right],$$
$$R':=K\left[X_b^{(\mathbf{j})}\ |\ \mathbf{j}\in [p^{m-1}]^{\times e},\ b\in B\right];$$
where for each $b\in B$, $X_b$ is identified with $X_b^{(0,\ldots,0)}$. We put a $\mathfrak{g}$-ring structure on $R$ which is $\mathfrak{g}$-extending $\mathbf{K}$ in the following way:
$$D_{\mathbf{j}}\left(X_b^{(\mathbf{i})}\right):=\sum_{\mathbf{f}\in [p^m]^{\times e}}c^{\mathbf{f}}_{\mathbf{i},\mathbf{j}}X_b^{(\mathbf{f})}.$$
Then $R'$ is a $\mathfrak{g}[m-1]$-subring of $R$.
Let us define a subset $W\subset R'$ as follows:
$$W:=\left\{\left(X_b^{(\mathbf{j})}\right)^p-D_{p\mathbf{j}}(b)\ |\ \mathbf{j}\in [p^{m-1}]^{\times e},\ b\in B\right\}.$$
We define the following $K$-algebra map
$$\Psi:R'\to C^{1/p},\ \ \Psi\left(X_b^{(\mathbf{j})}\right)=D_{p\mathbf{j}}(b)^{1/p}$$
and let $\mathfrak{m}=\ker(\Psi)$.
By Claim 1, $\Psi$ is a $\mathfrak{g}[m-1]$-map, so $\mathfrak{m}$ is a maximal $\mathfrak{g}[m-1]$-ideal of $R'$ containing $W$. We will show that the $\mathfrak{g}$-ideal $J$ in $R$ which is $\mathfrak{g}$-generated by $\mathfrak{m}$ is prime (then we can take $L$ as the field of fractions of $R/J$). Let us order the set $[p^{m-1}]^{\times e}$ as $\mathbf{k}_{(1)},\ldots,\mathbf{k}_{(p^{(m-1)e})}$ such that for $i\leqslant j$, we have $|\mathbf{k}_{(i)}|\leqslant |\mathbf{k}_{(j)}|$ (in particular $\mathbf{k}_{(1)}=(0,\ldots,0)$). We also order $B=(b_s)_{s<\kappa}$ such that $b_0=a$.
\\
\\
{\bf Claim 2}
\\
There is a set of generators of $\mathfrak{m}$ consisting of elements of the following two types:
$$\left(X_b^{(\mathbf{j})}\right)^p-D_{p\mathbf{j}}(b),\ \ \ \ \ \ \ \ \ \ \ \ X_{b_s}^{(\mathbf{k}_{(j)})} - \sum_{i=1}^{j-1}\sum _{l=0}^{p-1}\alpha_{t,i,l}\left(X_{b_s}^{(\mathbf{k}_{(i)})}\right)^l - \sum_{t<s}\sum_{\mathbf{j}\in [p^{m-1}]^{\times e}}\beta_{t,\mathbf{j}}X_{b_t}^{(\mathbf{j})};$$
where $b\in B$, $s<\kappa$, $\mathbf{j}\in [p^{m-1}]^{\times e}$, $j\in \{1,\ldots,p^{(m-1)e}\}$ and $\alpha_{i,l},\beta_{t,\mathbf{j}}\in K$.
\begin{proof}[Proof of Claim 2]
We construct a required set of generators in $p^{(m-1)e}\cdot \kappa$ steps.

In Step (0,1), we add $X^p-b_0$ to the (so far empty) set of generators.

In Step (0,2), we consider two cases.
\\
Case 1: $D_{p\mathbf{k}_{(2)}}(b_0)\notin K^p(b_0)$.
\\
In this case, we add the element
$$\left(X_{b_0}^{(\mathbf{k}_{(2)})}\right)^p-D_{p\mathbf{k}_{(2)}}(b_0)$$
to the set of generators.
\\
Case 2: $D_{p\mathbf{k}_{(2)}}(b_0)\in K^p(b_0)$.
\\
In this case
$$D_{p\mathbf{k}_{(2)}}(b_0)=\sum_{l=0}^{p-1}\alpha_l^pb_0^l,$$
for some $\alpha_0,\ldots,\alpha_{p-1}\in K$, and we add the element
$$X_{b_0}^{(\mathbf{k}_{(2)})}-\sum_{l=0}^{p-1}\alpha_l^pX_{b_0}^l$$
to the set of generators.

In Step (0,3), we proceed as in Step (0,2) with the field $K^p(b_0,D_{p\mathbf{k}_{(2)}}(b_0))$ replacing the field $K^p(b_0)$.

Continuing like this, after $p^{(m-1)e}$ steps we obtain our desired set of generators for $b_0$. We continue in a similar way (although e.g. Step (1,1) already consist of two cases) for $(b_i)_{i>0}$.
\end{proof}
By Lemma \ref{hsideal}, it is enough to apply to the generators from Claim 2 the operators $D_{\mathbf{i}}$, where $\mathbf{i}\in [p^{m-1}]^{\times e}$ or where $\mathbf{i}$ is of the form $(p^{m-1})_i$ (see Lemma \ref{hsideal} for the definition of $(p^{m-1})_i$). If $\mathbf{i}\in [p^{m-1}]^{\times e}$ then we get elements of $\mathfrak{m}$, so we may focus on the other case. To treat the first type of the generators from Claim 2, we just need to use that there is $\mathbf{l}\in [p^{m-1}]^{\times e}$ such that $\mathbf{i}=p\mathbf{l}$.
We obtain the following:
\begin{IEEEeqnarray*}{rCl}
D_{p\mathbf{l}}\left(\left(X_b^{(\mathbf{j})}\right)^p-D_{p\mathbf{j}}(b)\right) & = & \left(D_{\mathbf{l}}\left(X_b^{(\mathbf{j})}\right)\right)^p-D_{p\mathbf{l}}\left(D_{p\mathbf{j}}(b)\right)  \\
 & = & \left(\sum_{\mathbf{k}\in [p^{m-1}]^{\times e}} c_{\mathbf{i},\mathbf{j}}^{\mathbf{k}}X_b^{(\mathbf{k})}\right)^p  -
 \sum_{\mathbf{k}\in [p^{m-1}]^{\times e}} (c_{\mathbf{i},\mathbf{j}}^{\mathbf{k}})^pD_{p\mathbf{k}}(b)
   \\
 & = & \sum_{\mathbf{k}\in [p^{m-1}]^{\times e}} \left(c_{\mathbf{i},\mathbf{j}}^{\mathbf{k}}\right)^p\left(\left(X_b^{(\mathbf{k})}\right)^p-D_{p\mathbf{k}}(b)\right),
\end{IEEEeqnarray*}
where the second equality holds by Remark \ref{dpidpj}, since $b\in C$. Hence we get that $D_{p\mathbf{l}}((X_b^{(\mathbf{j})})^p-D_{p\mathbf{j}}(b))$ belongs to the ideal generated by $W$ in $R'$, so it also belongs to $\mathfrak{m}$.

After applying $D_{\mathbf{i}}$ to the second type of generators, by Lemma \ref{newton} we get the following:
$$D_{\mathbf{i}}\left(   X_{b_s}^{(\mathbf{k}_{(j)})} - \sum_{i=1}^{j-1}\sum_{l=0}^{p-1}\alpha_{i,l}\left(X_{b_s}^{(\mathbf{k}_{(i)})}\right)^l - \sum_{t<s}\sum_{\mathbf{j}\in [p^{m-1}]^{\times e}}\beta_{t,\mathbf{j}}X_{b_t}^{(\mathbf{j})} \right)  = $$
$$ {\mathbf{k}_{(j)}+\mathbf{i}\choose \mathbf{i}}X_{b_s}^{(\mathbf{k}_{(j)}+\mathbf{i})} + \sum_{i=1}^{j-1}\sum_{l=0}^{p-1}d_{i,l}\left(X_{b_s}^{(\mathbf{k}_{(i)}+\mathbf{i})}\right)^l + Q,$$
 for some $d_{i,l}\in K$, where
$${\mathbf{i}+\mathbf{j}\choose \mathbf{i}}:={i_1+j_1 \choose i_1}\cdot \ldots \cdot {i_e+j_e \choose i_e},$$
and where
$$Q\in K\left[X_{b_t}^{(\mathbf{r})},\ X^{(\mathbf{j})}_{b_s}\ |\ t<s,\ \mathbf{r}\in [p^m]^{\times e},\ \mathbf{j}\in [p^{m-1}]^{\times e}\right].$$
We forget now about the HS-differential structure on $R$ and $J$. The obtained set of generators of $J$ is of the form $J_1\cup J_2$, where $J_1\subseteq \mathfrak{m}$ and $J_2\subseteq R'[X_{\gamma}]_{\gamma\in \mathcal{I}}$ (after renaming variables), and
$$\mathcal{I}:=\kappa\times \{1,\ldots,e\}\times (\mathbf{k}_{i})_{i\in \{1,\ldots,p^{(m-1)e}\}}$$
with the lexicographical order. Moreover, there is a subset $\mathcal{I}_0\subset \mathcal{I}$ such that without loss of generality (since $p$ does not divide ${\mathbf{k}_{(j)}+\mathbf{i}\choose \mathbf{i}}$), we have
$$J_2=\{X_{\gamma}-Q_{\gamma}\ |\ \gamma\in \mathcal{I}_0\},$$
where for each $\gamma\in \mathcal{I}_0$, $Q_{\gamma}\in R'[X_{\delta}]_{\delta<\gamma}$. Therefore, the ring $R/J$ is isomorphic to a polynomial ring over $R'/\mathfrak{m}$, so the ideal $J$ is prime.
\end{proof}
\begin{remark}\label{values}
\begin{enumerate}
\item The proof above may seem to be technical, but the idea is very simple. For $\mathbf{j}\in [p^{m-1}]^{\times e}$, the value $D_{\mathbf{j}}(a^{1/p})$ is determined by Claim 1. For $i\leqslant e$, we set $D_{(p^{m-1})_i}(a^{1/p})$ as a new formal variable. The values of the remaining operators on $a^{1/p}$ are determined by the $\mathfrak{g}$-iterativity rule.

\item 

\item The above proof may be also used to show that a $\mathfrak{g}$-field is strict if and only if it is \emph{differentially perfect}, i.e. each of its differential extension is separable. This generalizes a result of Kolchin (see Chapter II, Section 3., Proposition 5.(a) in \cite{kol1}).
\end{enumerate}
\end{remark}
\noindent
We need one more lemma for the proof of the amalgamation property for a class of $\mathfrak{g}$-fields.
\begin{lemma}\label{forap}
Assume that $\mathbf{R}$ and $\mathbf{S}$ are $\mathfrak{g}$-rings. Then there is a unique $\mathfrak{g}$-ring structure on $R\otimes_kS$ such that the natural maps $\mathbf{R}\to \mathbf{R}\otimes_k\mathbf{S}$, $\mathbf{S}\to \mathbf{R}\otimes_k\mathbf{S}$ are $\mathfrak{g}$-homomorphisms.
\end{lemma}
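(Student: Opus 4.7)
The plan is to construct the $\mathfrak{g}$-derivation on $R \otimes_k S$ ``diagonally'' from the two given ones, verify the needed axioms by checking them on the pair of generating subalgebras $R \otimes 1$ and $1 \otimes S$, and then observe that the same generation statement forces uniqueness. Concretely, I would define
\[
\mathbb{D} : R \otimes_k S \longrightarrow (R \otimes_k S)[\mathbf{v}_m]
\]
to be the unique $k$-algebra homomorphism characterised by $\mathbb{D}(r \otimes 1) = (\iota_R)_*(\mathbb{D}_R(r))$ and $\mathbb{D}(1 \otimes s) = (\iota_S)_*(\mathbb{D}_S(s))$, where $\iota_R \colon R \hookrightarrow R \otimes_k S$ and $\iota_S \colon S \hookrightarrow R \otimes_k S$ are the canonical maps and $(\iota_R)_*, (\iota_S)_*$ are their extensions after tensoring with $k[\mathbf{v}_m]$. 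Equivalently, $\mathbb{D}$ is the composite of $\mathbb{D}_R \otimes \mathbb{D}_S \colon R \otimes_k S \to R[\mathbf{v}_m] \otimes_k S[\mathbf{v}_m]$ with the multiplication map $R[\mathbf{v}_m] \otimes_k S[\mathbf{v}_m] \to (R \otimes_k S)[\mathbf{v}_m]$; the map exists by the universal property of the tensor product of $k$-algebras.

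Next I would verify that $\mathbb{D}$ is a section of the projection $(R \otimes_k S)[\mathbf{v}_m] \to R \otimes_k S$, which is immediate on the generators $r \otimes 1$ and $1 \otimes s$ since $\mathbb{D}_R$ and $\mathbb{D}_S$ are sections and the projection is a $k$-algebra homomorphism. The fact that $\iota_R$ and $\iota_S$ are then $\mathfrak{g}$-homomorphisms is built into the definition of $\mathbb{D}$.

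The one non-trivial point is the $\mathfrak{g}$-iterativity condition, i.e.\ the commutativity of the square of Remark~\ref{gderint} for $\mathbb{D}$. Both composites in this square are $k$-algebra homomorphisms from $R \otimes_k S$ to $(R \otimes_k S)[\mathbf{w}_m, \mathbf{v}_m]$, so it suffices to test equality on the generating set $\{r \otimes 1,\; 1 \otimes s\}$. On $r \otimes 1$ the statement reduces, via the injection $R[\mathbf{w}_m, \mathbf{v}_m] \hookrightarrow (R \otimes_k S)[\mathbf{w}_m, \mathbf{v}_m]$ obtained from $\iota_R$, to the $\mathfrak{g}$-iterativity of $\mathbb{D}_R$, and symmetrically for $1 \otimes s$. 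I expect this to be the main (though still routine) step.

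Uniqueness is then automatic: any $\mathfrak{g}$-derivation on $R \otimes_k S$ for which $\iota_R$ and $\iota_S$ are $\mathfrak{g}$-homomorphisms must agree with $\mathbb{D}_R$ on $R \otimes 1$ and with $\mathbb{D}_S$ on $1 \otimes S$, hence must coincide with $\mathbb{D}$ since those two subalgebras generate $R \otimes_k S$ as a $k$-algebra. As a conceptual sanity check, on spectra the construction corresponds to the pull-back of the two given actions along the diagonal $\mathfrak{g} \to \mathfrak{g} \times \mathfrak{g}$, acting on $\spec(R) \times_k \spec(S) = \spec(R \otimes_k S)$; this picture is not needed for the proof but explains why no further coherence has to be checked.
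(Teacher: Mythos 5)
Your proof is correct and is essentially the paper's argument: the paper constructs the diagonal $\mathfrak{g}$-action on $\spec(R)\times_k\spec(S)$ (as for ordinary group actions on a product, with equivariant projections) and then dualizes, which is exactly the tensor-product coaction you write down explicitly, together with the generation-by-$R\otimes 1$ and $1\otimes S$ argument for uniqueness. Your concluding ``sanity check'' is in fact the paper's entire proof.
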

\begin{proof}
It is convenient to show a more general result. Assume that $V,W$ are schemes over $k$ and $\mathbb{D}_V$ and $\mathbb{D}_W$ are $\mathfrak{g}$-derivations on $V$ and $W$ respectively. As in the case of the usual group actions, we get a unique $\mathfrak{g}$-derivation on $V\times W$ such that the projections $V\times W\to V,W$ are $\mathfrak{g}$-invariant. By considering the affine schemes and dualizing, we get what we need.
\end{proof}
\begin{prop}\label{lambdaap}
Let $\mathbf{K}\subseteq \mathbf{L}_1,\mathbf{K}\subseteq \mathbf{L}_2$ be $\mathfrak{g}$-field extensions and assume that $\mathbf{K}$ is strict. Then $\mathbf{L}_1$ and $\mathbf{L}_2$ can be $\mathfrak{g}$-amalgamated over $\mathbf{K}$ into a $\mathfrak{g}$-field.
\end{prop}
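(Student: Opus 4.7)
The plan is to realize the amalgam as a suitable quotient of $L_1 \otimes_K L_2$, equipped with a $\mathfrak{g}$-derivation inherited from the tensor product structure. First I would invoke Lemma \ref{forap} to endow $L_1 \otimes_k L_2$ with its canonical $\mathfrak{g}$-derivation making both natural maps $L_i \to L_1 \otimes_k L_2$ into $\mathfrak{g}$-homomorphisms. The kernel $I$ of the surjection $L_1 \otimes_k L_2 \twoheadrightarrow L_1 \otimes_K L_2$ is generated by the elements $a \otimes 1 - 1 \otimes a$ for $a \in K$. Applying the comodule map $\mathbb{D}$ to such a generator yields
$$\sum_{\mathbf{i}}\bigl(D_{\mathbf{i}}(a)\otimes 1 - 1 \otimes D_{\mathbf{i}}(a)\bigr)\mathbf{v}^{\mathbf{i}},$$
and since $a\in K$ forces $D_{\mathbf{i}}(a) \in K$, every summand lies in $I\cdot (L_1\otimes_k L_2)[\mathbf{v}_m]$. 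Hence $I$ is a $\mathfrak{g}$-ideal, and $L_1 \otimes_K L_2$ inherits a $\mathfrak{g}$-structure making both inclusions $\mathfrak{g}$-homomorphisms over $\mathbf{K}$.

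Next I would exploit strictness of $\mathbf{K}$: by Lemma \ref{constantdisjoint}(2), the extension $K\subseteq L_1$ is separable, so $L_1\otimes_K L_2$ is reduced. This reduces the amalgamation problem to finding a $\mathfrak{g}$-stable prime in the reduced $\mathfrak{g}$-ring $R := L_1 \otimes_K L_2$, and then taking the fraction field of the quotient and extending the $\mathfrak{g}$-derivation via the étale localization map (by Propositions \ref{etalehs} and \ref{etalegext}). Since $L_1$ and $L_2$ are fields, the compositions $L_i \to R \to \mathrm{Frac}(R/\mathfrak{p})$ will automatically be injective $\mathfrak{g}$-embeddings for any prime $\mathfrak{p}$ we choose.

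The main obstacle is thus showing that \emph{some} prime of $R$ (say a minimal one) is $\mathfrak{g}$-stable; morally this should hold because $\mathfrak{g}$ is an infinitesimal group scheme, hence topologically trivial, so it must preserve each irreducible component of $\spec(R)$. To argue this concretely, I would fix a minimal prime $\mathfrak{p}$ of $R$ and consider the étale localization $R \to R_{\mathfrak{p}}$; since $R$ is reduced and $\mathfrak{p}$ is minimal, $R_{\mathfrak{p}}$ is a field and $\mathfrak{p} R_{\mathfrak{p}} = 0$. The $\mathfrak{g}$-derivation on $R$ extends uniquely to $R_{\mathfrak{p}}$ by Propositions \ref{etalehs} and \ref{etalegext}, and for any $a \in \mathfrak{p}$ the element $a$ dies in $R_{\mathfrak{p}}$, forcing each coefficient $D_{\mathbf{i}}(a)$ to map to zero there as well. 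Because the kernel of $R \to R_{\mathfrak{p}}$ consists of elements annihilated by some $s\notin \mathfrak{p}$, and in the reduced ring $R$ any such element automatically lies in $\mathfrak{p}$, this yields $D_{\mathbf{i}}(a) \in \mathfrak{p}$, i.e. $\mathfrak{p}$ is a $\mathfrak{g}$-ideal.

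Finally, I would set $M := \mathrm{Frac}(R/\mathfrak{p})$. By the previous step $R/\mathfrak{p}$ is a $\mathfrak{g}$-integral domain, and by Propositions \ref{etalehs} and \ref{etalegext} the $\mathfrak{g}$-derivation extends uniquely to its field of fractions. The induced maps $\mathbf{L}_1, \mathbf{L}_2 \hookrightarrow \mathbf{M}$ are $\mathfrak{g}$-homomorphisms compatible with the given embeddings of $\mathbf{K}$, which gives the desired $\mathfrak{g}$-amalgamation.
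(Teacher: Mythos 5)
Your proof is correct, and it takes a genuinely different route from the paper's at the key step. Both arguments start from $L_1\otimes_K L_2$ carrying the $\mathfrak{g}$-structure descended from Lemma \ref{forap} (the paper leaves this descent from $\otimes_k$ to $\otimes_K$ implicit; you spell it out, correctly, by checking that the kernel of $L_1\otimes_k L_2\to L_1\otimes_K L_2$ is a $\mathfrak{g}$-ideal). The difference lies in how one gets from this ring to a field. The paper first replaces $K$, $L_1$, $L_2$ by their separable closures, using Proposition \ref{etalegext} and the fact that strictness passes to $K^{\sep}$ (Lemma \ref{constantdisjoint} together with Corollary \ref{dimconstants}); once $K$ is separably closed, the separable extension $K\subseteq L_1$ is regular, so $L_1\otimes_K L_2$ is a domain and one simply takes its fraction field. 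You instead keep $K$ as given, use separability of $K\subseteq L_1$ only to conclude that $R=L_1\otimes_K L_2$ is reduced, and then show that any minimal prime $\mathfrak{p}$ of $R$ is $\mathfrak{g}$-stable: $R_{\mathfrak{p}}$ is a field since $R$ is reduced and $\mathfrak{p}$ is minimal, the localization map is \'{e}tale and hence carries a compatible $\mathfrak{g}$-structure by Propositions \ref{etalehs} and \ref{etalegext}, and the kernel of $R\to R_{\mathfrak{p}}$ is contained in $\mathfrak{p}$, which forces $D_{\mathbf{i}}(\mathfrak{p})\subseteq\mathfrak{p}$. Each of these steps checks out, and iterativity passes to the quotient and to the fraction field as you say. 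Your route avoids the passage to separable closures and the linear-disjointness/regularity argument via Lang, at the price of the (correct and self-contained) auxiliary fact that minimal primes of a reduced $\mathfrak{g}$-ring are $\mathfrak{g}$-ideals; the paper's route avoids having to select a prime at all because the tensor product is already a domain.
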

\begin{proof}
By Proposition \ref{etalegext}, we can assume that
$L_1,L_2$ are separably closed (since separable algebraic extensions are \'{e}tale). Then $K^{\sep}$ is a subfield of $L_1$ and $L_2$. By Remark \ref{defgen}(5), it is a $\mathfrak{g}$-subfield. By Lemma \ref{constantdisjoint} (using Corollary \ref{dimconstants}), the $\mathfrak{g}$-field structure on $K^{\sep}$ is strict. Therefore we can assume that $K$ is separably closed as well.
\\
The proof will be finished exactly as in \cite[Prop. 2.6]{Zieg2}. We can assume that $L_1,L_2$ are subfields of a big field $\Omega$ and that they are algebraically disjoint over $K$. By Lemma \ref{constantdisjoint}, the extension $K\subseteq L_1$ is separable. Since $K$ is separably closed, this extension is regular (see \cite[p. 367]{lang2002algebra}). By \cite[Thm. VIII 4.12]{lang2002algebra}, $L_1$ and $L_2$ are linearly disjoint over $K$, thus $L_1\otimes_KL_2$ is a domain. By Lemma \ref{forap}, there is a $\mathfrak{g}$-structure on $L_1\otimes_KL_2$ extending those on $L_1$ and $L_2$. By Proposition \ref{etalegext} again, we get the required $\mathfrak{g}$-structure on the field of fractions of $L_1\otimes_KL_2$.
\end{proof}

\section{Model companions}\label{secmc}
\noindent
In this section we turn our attention to the model-theoretic properties of (truncated) HS-fields. Our results here generalize the corresponding results from \cite{Zieg2} and \cite{K3}, and our proofs do not differ much.

\subsection{Existentially closed $\mathfrak{g}$-fields}\label{sececg}
In this subsection, we assume that $\mathfrak{g}$ is of the form $W[m]$ for a formal group law $W$ (see Remark \ref{intgs}).
\\
\\
Let $K$ be a field of characteristic $p$ and $\lambda$ be the following \emph{$p$-th root function}:
$$\lambda:K\to K,\ \ \ \lambda(x)= \begin{cases} x^{1/p}\text{ for }x\in K^p, \\ 0\ \ \ \ \text{ for }x\notin K^p. \end{cases}$$
We introduce several languages.
\begin{itemize}
\item Let $L^{\lambda}$ be the language of rings expanded by a unary function symbol $\lambda$.

\item Let $L_{e,m}$ be the language of $k$-algebras (so there are constants in the language for the elements of $k$) with $m$-truncated $e$-dimensional HS-derivations.

\item Let $L^{\lambda}_{e,m}=L^{\lambda}\cup L_{e,m}$.
\end{itemize}
\noindent
\begin{lemma}\label{lamb}
Each field of characteristic $p$ has a natural $L^{\lambda}$-structure and we have the following.
\begin{enumerate}
\item A field extension $K\subseteq L$ is an $L^{\lambda}$-extension if and only if $L^p\cap K=K^p$.

\item If $\mathbf{K}\subseteq \mathbf{L}$ is an $L_{e,m}$-extension and $\mathbf{K}$ is strict, then $\mathbf{K}\subseteq \mathbf{L}$ is an $L^{\lambda}_{e,m}$-extension.

\item Suppose $R$ is a subring of a field $L$, $K$ is the field of fractions of $R$ and $R\subseteq L$ is an $L^{\lambda}$-extension. Then $K\subseteq L$ is an $L^{\lambda}$-extension.
\end{enumerate}
\end{lemma}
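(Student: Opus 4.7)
The plan is to handle the three parts in order. Parts (1) and (2) are essentially unpacking definitions, while (3) is the main computation and is the place where a small trick is needed.

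For part (1), I would unpack the definition of an $L^{\lambda}$-extension: $K \subseteq L$ is an $L^{\lambda}$-extension iff $\lambda_L|_K = \lambda_K$. Fix $x \in K$. If $x \in K^p$, both functions return the unique $p$-th root of $x$ (uniqueness is just injectivity of Frobenius on the field $L$, and the root lies in $K$). If $x \in K \setminus K^p$, then $\lambda_K(x) = 0$, and this matches $\lambda_L(x)$ exactly when $x \notin L^p$. So $\lambda_L|_K = \lambda_K$ is equivalent to $L^p \cap K \subseteq K^p$, and the reverse inclusion is automatic.

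For part (2), I would combine two observations. First, for any HS-derivation $\mathbb{D}$ one has $\mathbb{D}(x^p) = \mathbb{D}(x)^p$, so $D_{\mathbf{i}}(x^p) = 0$ unless every $i_j$ is a multiple of $p$; in particular each basic derivation $D_{(0,\ldots,1,\ldots,0)}$ kills $L^p$ (this is the remark following Definition \ref{firstdefstrict}). Second, since $\mathbf{K} \subseteq \mathbf{L}$ is an $L_{e,m}$-extension, the basic derivations on $L$ restrict to those on $K$. So any $x \in L^p \cap K$ lies in the ring of constants of $\mathbf{K}$, which equals $K^p$ by strictness. Hence $L^p \cap K \subseteq K^p$, and part (1) concludes the argument.

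For part (3), the main step, take $x \in L^p \cap K$; by part (1) it suffices to show $x \in K^p$. Write $x = a/b$ with $a, b \in R$ and $b \neq 0$, and pick $y \in L$ with $y^p = x$. The key calculation is
\[ (by)^p \;=\; b^p y^p \;=\; b^p \cdot \frac{a}{b} \;=\; a b^{p-1} \;\in\; R. \]
Thus $ab^{p-1} \in L^p \cap R$, and its unique $p$-th root in $L$ is $by$. The hypothesis that $R \subseteq L$ is an $L^{\lambda}$-extension means that $R$ is closed under $\lambda_L$, so $by = \lambda_L(ab^{p-1}) \in R$; therefore $y = (by)/b \in K$ and $x = y^p \in K^p$. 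The main obstacle is just deciding to clear denominators by the trick of multiplying through by $b^p$ inside the $p$-th root — after that, the argument is a direct appeal to the definitions and part (1).
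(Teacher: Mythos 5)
Your proof is correct. Parts (1) and (3) are precisely the ``clear''/``easy calculation'' steps the paper leaves to the reader, and your denominator-clearing computation $(by)^p=ab^{p-1}\in R$ is the intended argument for (3). The one place you genuinely diverge is part (2): the paper deduces it from Lemma \ref{constantdisjoint}(2), i.e.\ from the fact that a strict $\mathfrak{g}$-field admits only separable extensions (which in turn rests on the Wronskian argument of Proposition \ref{wronskian}), whereas you argue directly that the coordinate derivations kill $p$-th powers, so any $x\in L^p\cap K$ lies in the ring of constants of $\mathbf{K}$, which is $K^p$ by strictness. Your route is more elementary and, notably, needs no iterativity at all: it works for an arbitrary $L_{e,m}$-extension, which is exactly the generality in which Lemma \ref{lamb}(2) is stated, while the cited Lemma \ref{constantdisjoint}(2) is formally a statement about $\mathfrak{g}$-field extensions. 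What the paper's route buys instead is the stronger conclusion that $K\subseteq L$ is separable, of which $L^p\cap K=K^p$ is only the first consequence. Both arguments are sound; yours is the leaner and more self-contained of the two.
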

\begin{proof}
The item $(1)$ is clear and $(2)$ follows from Lemma \ref{constantdisjoint}(2). The last item follows by an easy computation.
\end{proof}
\noindent
We need the following well-known description of elementary extensions of separably closed fields.
\begin{lemma}\label{elementaryetale}
Let us assume that $\mathbf{K}$ and $\mathbf{L}$ are $\mathfrak{g}$-fields such that $K$ and $L$ are separably closed and that $K$ has finite imperfectness degree. Then the following are equivalent.
\begin{enumerate}
\item The fields $K$ and $L$ have the same (absolute) $p$-basis.

\item The $K$-algebra $L$ is \'{e}tale.

\item The extension $K\subseteq L$ is elementary (in the language of rings).

\item The extension $\mathbf{K}\subseteq \mathbf{L}$ is $L^{\lambda}_{e,m}$-elementary.

\item The extension $\mathbf{K}\subseteq \mathbf{L}$ is $L_{e,m}$-elementary.
\end{enumerate}
\end{lemma}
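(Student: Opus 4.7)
The plan is to close the equivalences in the cycle $(1)\Rightarrow(2)\Rightarrow(4)\Rightarrow(5)\Rightarrow(3)\Rightarrow(1)$. The implications $(4)\Rightarrow(5)$ and $(5)\Rightarrow(3)$ are immediate from the language inclusions $L_{\mathrm{rings}}\subseteq L_{e,m}\subseteq L^{\lambda}_{e,m}$. The equivalence $(1)\Leftrightarrow(2)$ is the $p$-basis characterization of \'{e}tale extensions for fields of characteristic $p$ (\cite[Sect.~26]{mat}): a separable field extension is \'{e}tale iff a $p$-basis of the base remains a $p$-basis of the extension. For $(3)\Rightarrow(1)$, the equation ``$[K:K^p]=p^e$'' is first-order in $L_{\mathrm{rings}}$ when $e$ is finite (it asserts existence of $e$ and nonexistence of $e+1$ $p$-independent elements, and $p$-independence of a fixed tuple is definable), so it is preserved under elementary extensions; a $p$-basis $B$ of $K$ then stays $p$-independent in $L$, and as $|B|=e=[L:L^p]$ it is a $p$-basis of $L$.

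The substance lies in $(2)\Rightarrow(4)$. Given \'{e}tale $K\subseteq L$, \'{e}taleness gives $L^p\cap K=K^p$, so by Lemma~\ref{lamb}(1) the extension $\mathbf{K}\subseteq\mathbf{L}$ is an $L^{\lambda}$-extension. The classical Ersov--Wood--Delon quantifier elimination for separably closed fields of fixed finite imperfection degree $e$ in the language $L^{\lambda}$ then gives $K\preceq L$ as $L^{\lambda}$-structures. To upgrade to $L^{\lambda}_{e,m}$-elementarity I invoke Proposition~\ref{etalegext}: the $\mathfrak{g}$-derivation on $\mathbf{L}$ is the \emph{unique} \'{e}tale extension of the one on $\mathbf{K}$, so for each $a\in K$ and each multi-index $\mathbf{i}$ the element $D_{\mathbf{i}}(a)$ is the same whether computed in $\mathbf{K}$ or in $\mathbf{L}$. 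A Tarski--Vaught argument completes the step: an existential $L^{\lambda}_{e,m}$-statement over $K$ witnessed in $\mathbf{L}$ can be rewritten, after introducing extra existentially quantified variables for the HS-derivation values of the witness, as an existential $L^{\lambda}$-statement over $K$ that is witnessed in $L$, hence realised in $K$ by $L^{\lambda}$-elementarity; Proposition~\ref{etalegext} applied to the \'{e}tale $K$-subalgebra generated by the $K$-realisation then guarantees that the matching $\mathfrak{g}$-structure on this subalgebra is the one induced from $\mathbf{K}$, so the realisation satisfies the original $L^{\lambda}_{e,m}$-formula.

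The main obstacle is precisely this last coding step: the symbols $D_{\mathbf{i}}$ are genuinely new function symbols, not definable from the ring structure, and one must argue that quantifier-free $L^{\lambda}_{e,m}$-conditions on a witness $b\in L$ can be faithfully replayed inside $L^{\lambda}$ by quantifying over the finitely many relevant values $D_{\mathbf{i}}(b),\,D_{\mathbf{i}}D_{\mathbf{j}}(b),\ldots$ The enabling fact is the rigidity provided by Proposition~\ref{etalegext}: because the $\mathfrak{g}$-structure along an \'{e}tale extension is uniquely determined by the ground field, the auxiliary variables introduced for the coding have unambiguous interpretations on both sides, and the reduction to $L^{\lambda}$ is consistent.
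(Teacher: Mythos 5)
Most of your cycle is fine: $(1)\Leftrightarrow(2)$, $(3)\Rightarrow(1)$, and the reduct implications $(4)\Rightarrow(5)\Rightarrow(3)$ all go through, and deducing ring-elementarity from a common $p$-basis via Delon is exactly what the paper does. The genuine gap is in $(2)\Rightarrow(4)$, precisely at the coding step you yourself flag as the main obstacle. Replacing each occurrence of $D_{\mathbf{i}}(b)$ by a fresh existentially quantified variable $y_{\mathbf{i}}$ produces an $L^{\lambda}$-formula whose realisation in $K$ is some tuple $(b',\bar y')$ satisfying the same ring conditions; nothing forces $y'_{\mathbf{i}}=D_{\mathbf{i}}(b')$. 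Proposition~\ref{etalegext} cannot rescue this: it says that the $\mathfrak{g}$-derivation on $L$ extending the one on $K$ is unique, i.e.\ it constrains the actual derivation along the \'{e}tale extension, but it says nothing about an arbitrary tuple $\bar y'\in K$ that happens to satisfy a quantifier-free ring formula, and the ``\'{e}tale $K$-subalgebra generated by the $K$-realisation'' is just a subring of $K$, on which the proposition gives no grip on $\bar y'$. (A further problem: the Tarski--Vaught test requires witnesses for $\exists x\,\varphi$ with $\varphi$ an \emph{arbitrary} formula, so even if your coding worked for quantifier-free $\varphi$ you would only obtain that $K$ is existentially closed in $L$, not that the extension is elementary.)

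The missing idea --- and it is the exact opposite of your remark that the $D_{\mathbf{i}}$ are ``not definable from the ring structure'' --- is that over these fields they \emph{are} ring-definable with parameters, and this is what the paper's one-line argument for $(3)\Rightarrow(4),(5)$ uses. Since $K^{p^m}$ lies in the ring of absolute constants, each $D_{\mathbf{i}}$ is $K^{p^m}$-linear; writing $x=\sum_{\mathbf{j}\in[p^m]^e}c_{\mathbf{j}}(x)^{p^m}b^{\mathbf{j}}$ for the (ring-definable) decomposition of $x$ over the monomials in the common $p$-basis $B$, one gets $D_{\mathbf{i}}(x)=\sum_{\mathbf{j}}c_{\mathbf{j}}(x)^{p^m}D_{\mathbf{i}}(b^{\mathbf{j}})$, a formula with parameters $B\cup\{D_{\mathbf{i}}(b^{\mathbf{j}})\}\subseteq K$ which defines $D_{\mathbf{i}}$ in $K$ and, because $B$ is also a $p$-basis of $L$ and the derivation on $L$ extends that on $K$, defines $D_{\mathbf{i}}$ in $L$ by the very same formula; likewise for $\lambda$. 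Hence every $L^{\lambda}_{e,m}$-formula is uniformly equivalent in $K$ and in $L$ to a ring formula with parameters in $K$, and the equivalence of $(3)$ with $(4)$ and $(5)$ is immediate. Your step $(2)\Rightarrow(4)$ should be replaced by this definability argument.
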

\begin{proof}
By the proof of \cite[Theorem 2.1]{Del1} and by \cite[Claim 2.2]{Del1}, the extension $K\subseteq L$ is elementary if and only if $K$ and $L$ have the same (absolute) $p$-basis. By \cite[Theorem 26.7]{mat}, it happens if and only if this extension is \'{e}tale, so we get the equivalence of $(1)$, $(2)$ and $(3)$.
\\
Since both the $\lambda$-function and the $\mathfrak{g}$-derivation are defined (over the field of the $p^m$-th powers) using the field operations and the elements of a $p$-basis, we get the equivalence of $(3)$ with $(4)$ and $(5)$.
\end{proof}
\noindent
We introduce now several theories.
\begin{itemize}
\item Let $\gdf$ be the theory of $\mathfrak{g}$-fields in the language $L_{e,m}$.

\item Let $\gdf_{\lambda}$ be the theory of strict $\mathfrak{g}$-fields in the language $L_{e,m}^{\lambda}$ i.e. $\gdf_{\lambda}$ contains the natural interpretation of $\lambda$ and the following extra axiom:
$$\forall x \Big((x\neq 0\ \wedge \ \lambda(x)=0)\ \rightarrow\ (D_{(1,0,\ldots,0)}(x)\neq 0\ \vee\ \ldots \vee D_{(0,\ldots,0,1)}(x)\neq 0)\Big).$$

\item Let $\gdcf$ be the theory $\gdf$ with the extra axioms for strict $\mathfrak{g}$-fields, and for separably closed fields of imperfectness degree $e$.

\item Similarly for $\gdcf_{\lambda}$.
\end{itemize}
\noindent
\begin{lemma}\label{consistent}
The theories $\gdcf$ and $\gdcf_{\lambda}$ are consistent.
\end{lemma}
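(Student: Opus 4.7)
The plan is to exhibit a concrete model of $\gdcf$ by taking the separable closure of a canonical formal-Laurent-series $\mathfrak{g}$-field, and then to observe that with its natural $\lambda$-function this same model satisfies $\gdcf_\lambda$.

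Pick a formal group law $W$ over $k$ with $W[m]=\mathfrak{g}$ (available by our standing assumption in Section \ref{sececg}). Let $K_0:=k((X_1,\ldots,X_e))$ equipped with $\mathbb{D}:=\mathbb{D}^W[m]$, the $m$-truncation of the canonical $W$-derivation from Example \ref{canex}. By Proposition \ref{canstrict}, $(K_0,\mathbb{D})$ is a strict $\mathfrak{g}$-field. Since $k$ is perfect, $K_0^p=k((X_1^p,\ldots,X_e^p))$ and the monomials $\mathbf{X}^{\mathbf{i}}$ for $\mathbf{i}\in[p]^e$ form a basis of $K_0$ over $K_0^p$, so $[K_0:K_0^p]=p^e$. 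Set $L:=K_0^{\sep}$. Separable algebraic extensions are \'etale (as recalled in the proof of Proposition \ref{lambdaap}), so Proposition \ref{etalegext} lifts $\mathbb{D}$ uniquely to a $\mathfrak{g}$-derivation on $L$. The same \'etaleness forces $[L:L^p]=[K_0:K_0^p]=p^e$ (this is used inside the proof of Lemma \ref{constantdisjoint}(3)), and Lemma \ref{constantdisjoint}(3) itself transfers strictness from $K_0$ to $L$. Hence $\mathbf{L}$ is a strict $\mathfrak{g}$-field whose underlying field is separably closed of imperfection degree $e$, that is, $\mathbf{L}\models\gdcf$.

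For $\gdcf_\lambda$, endow $\mathbf{L}$ with its natural $L^\lambda$-structure supplied by Lemma \ref{lamb}. The only axiom beyond $\gdcf$ is the defining implication relating $\lambda$ to the derivations. Given $x\in L\setminus\{0\}$ with $\lambda(x)=0$, one has $x\notin L^p$ by definition of $\lambda$; by strictness of $\mathbf{L}$ the ring of constants equals $L^p$, which in turn is $\bigcap_{i=1}^{e}\ker D_{(0,\ldots,1,\ldots,0)}$, so at least one $D_{(0,\ldots,1,\ldots,0)}(x)$ must be nonzero, which is exactly the required conclusion. Thus $\mathbf{L}\models\gdcf_\lambda$ as well.

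I do not anticipate a genuine obstacle: the construction is completely explicit and every ingredient is already supplied by a preceding result. The only point requiring minor care is the simultaneous preservation, under the \'etale base change $K_0\subseteq K_0^{\sep}$, of all three structural properties that $\gdcf$ demands -- being a $\mathfrak{g}$-derivation (Proposition \ref{etalegext}), strictness (Lemma \ref{constantdisjoint}(3)), and imperfection degree exactly $e$ (again part of Lemma \ref{constantdisjoint}(3)) -- together with the elementary verification that strictness is precisely what forces the $\lambda$-axiom.
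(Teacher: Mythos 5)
Your proof is correct and follows essentially the same route as the paper: take the $m$-truncation of the canonical $W$-derivation on $k((X_1,\ldots,X_e))$ (Example \ref{canex}), invoke Proposition \ref{canstrict} for strictness, extend to the separable closure via Proposition \ref{etalegext}, and use Lemma \ref{constantdisjoint}(3) together with \'etaleness to preserve strictness and the imperfection degree $e$. Your explicit verification of the $\lambda$-axiom (strictness forces some $D_{(0,\ldots,1,\ldots,0)}(x)\neq 0$ when $x\notin L^p$) is a detail the paper leaves implicit, but it is exactly what the paper's definition of $\gdcf_\lambda$ intends.
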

\begin{proof}
We take the canonical $F$-derivation $\mathbb{D}^F$ on the field $K$ from Example \ref{canex}. By Proposition \ref{canstrict}, $\mathbf{K}$ is strict and clearly the imperfection degree of $K$ is $e$. By Proposition \ref{etalegext}, $\mathbb{D}^F$ extends to the separable closure of $K$ (a separable algebraic extension is \'{e}tale, see \cite[Thm. 26.7]{mat}). By Lemma \ref{constantdisjoint}(3), this extension is still strict. Since it is \'{e}tale, the degree of imperfection  does not change.
\end{proof}
\noindent
The main technical result needed for quantifier elimination is the proposition below. It does not differ much from \cite[3.1]{Zieg2} where the case of full (i.e. non-truncated) HS-derivations is considered. For reader's convenience we include a proof following the lines of the proof of \cite[3.1]{Zieg2}.
\begin{prop}\label{elemamal}
Let us take $\mathbf{L}\models \gdcf_{\lambda}$, $\mathbf{F}$ being a strict $\mathfrak{g}$-field and assume that $\mathbf{K}$ is an  $L^{\lambda}_{e,m}$-substructure of both $\mathbf{F}$ and $\mathbf{L}$. Then there is an $L^{\lambda}_{e,m}$-embedding of $\mathbf{F}$ over $\mathbf{K}$ into an elementary extension of $\mathbf{L}$. \end{prop}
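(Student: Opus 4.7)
The approach will follow the structure of \cite[Prop. 3.1]{Zieg2}, combining the amalgamation of strict $\mathfrak{g}$-fields (Proposition \ref{lambdaap}) with the étale characterization of Proposition \ref{strictetale}. The goal is to produce a single $\mathfrak{g}$-field $\mathbf{M}$ which is simultaneously a model of $\gdcf_\lambda$, an $L^\lambda_{e,m}$-elementary extension of $\mathbf{L}$, and contains an $L^\lambda_{e,m}$-embedded copy of $\mathbf{F}$ over $\mathbf{K}$.

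First, I would reduce to the case where $\mathbf{K}$ is a field. By Lemma \ref{lamb}(3), the fraction field $\tilde{K}$ of $K$ is still an $L^\lambda$-substructure of both $F$ and $L$; since localizations are étale, the $\mathfrak{g}$-structure extends uniquely to $\tilde{K}$ (Remark \ref{defgen}(1)) and agrees with the structures inherited from $F$ and $L$. So we may assume $K$ is a field. Moreover $\mathbf{K}$ is strict: because $\mathbf{L}$ is strict (as $\mathbf{L}\models\gdcf_\lambda$), the constants satisfy $C_K = K\cap C_L = K\cap L^p$, and the $L^\lambda$-substructure hypothesis (Lemma \ref{lamb}(1)) gives $K\cap L^p = K^p$.

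Next, Proposition \ref{lambdaap} applied to the strict $\mathfrak{g}$-field $\mathbf{K}$ yields a $\mathfrak{g}$-field $\mathbf{N}$ into which $\mathbf{F}$ and $\mathbf{L}$ are $\mathfrak{g}$-amalgamated over $\mathbf{K}$. The crucial step is to observe that the extension $L\subseteq N$ is \'{e}tale: since $\mathbf{L}$ is strict with $[L:L^p]=p^e$ and $\mathbf{L}\subseteq\mathbf{N}$ is a $\mathfrak{g}$-extension, this is exactly the conclusion of Proposition \ref{strictetale}. From Lemma \ref{constantdisjoint}(3) we then deduce that $\mathbf{N}$ itself is strict, and \'{e}taleness preserves imperfection so $[N:N^p]=p^e$.

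Finally, let $M$ be a separable closure of $N$. Because separable algebraic extensions are \'{e}tale, Proposition \ref{etalegext} lets us extend the $\mathfrak{g}$-structure uniquely to $M$, and by Lemma \ref{constantdisjoint}(3) again $\mathbf{M}$ remains strict with $[M:M^p]=p^e$; thus $\mathbf{M}\models\gdcf_\lambda$. The composition $L\subseteq N\subseteq M$ is \'{e}tale, and $L,M$ are separably closed of the same finite imperfection degree, so Lemma \ref{elementaryetale} gives $\mathbf{L}\preceq\mathbf{M}$ in $L^\lambda_{e,m}$. The inclusion $\mathbf{F}\hookrightarrow\mathbf{N}\subseteq\mathbf{M}$ is an $L_{e,m}$-embedding over $\mathbf{K}$ by construction, and is an $L^\lambda_{e,m}$-embedding because $\mathbf{F}$ is strict (Lemma \ref{lamb}(2)).

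The main obstacle is the \'{e}taleness step: everything hinges on Proposition \ref{strictetale}, whose power relies on the fact that $\mathbf{L}$ already has maximal imperfection degree $p^e$ and is strict, so that the amalgam cannot acquire any new $p$-basis over $L$. Once \'{e}taleness of $L\subseteq N$ is established, strictness of $\mathbf{N}$ and $\mathbf{M}$, preservation of the imperfection degree, and the elementarity of $\mathbf{L}\preceq\mathbf{M}$ all flow from the structural lemmas developed in Section \ref{secsetup}.
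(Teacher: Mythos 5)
Your proof is correct and follows essentially the same route as the paper: reduce to a strict field $\mathbf{K}$, amalgamate via Proposition \ref{lambdaap}, and use Propositions \ref{strictetale} and \ref{etalegext} together with Lemma \ref{elementaryetale} to land in an elementary extension of $\mathbf{L}$. The only (harmless) divergence is at the final step: the paper first embeds the amalgam $\mathbf{F}'$ into a field-elementary extension $L'$ of $L$ (via Delon's embedding lemma) and then transports the $\mathfrak{g}$-structure to $L'$, whereas you take the separable closure $\mathbf{M}$ of the amalgam and invoke the implication ``\'{e}tale implies elementary'' of Lemma \ref{elementaryetale} for the separably closed fields $L\subseteq M$ --- both versions ultimately rest on the same model completeness of separably closed fields packaged in that lemma.
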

\begin{proof}
By Remark \ref{defgen}(1), Proposition \ref{etalegext} and Lemma \ref{lamb}(3), we can assume that $K$ is a field. By Lemma \ref{lamb}(1), we have $K^p=L^p\cap K$. Since $\mathbf{L}\models \gdcf_{\lambda}$, $\mathbf{L}$ is strict and hence $\mathbf{K}$ is strict as well.
By Proposition \ref{lambdaap} and Lemma \ref{lamb}(2), we can $L^{\lambda}_{e,m}$-amalgamate $\mathbf{F}$ and $\mathbf{L}$ over $\mathbf{K}$ into a strict $\mathfrak{g}$-field $\mathbf{F}'$. Since the extension $L\subseteq F'$ is separable, by \cite[Claim 2.2]{Del1} there is an elementary extension in the language of fields $L\preccurlyeq L'$ and a field embedding $\Upsilon:F'\to L'$ over $L$. The situation is described in the following diagram:
\begin{equation*}
\xymatrixcolsep{4pc}
\xymatrix{  &   &  L'\\
 &  \mathbf{F}'\ar[ru]^{\Upsilon} &  \\
\mathbf{F}\ar[ru]^{} &   & \mathbf{L}\ar[lu]^{} \ar[uu]^{\preccurlyeq}\\
  &  \mathbf{K}\ar[ru]^{}\ar[lu]^{}.  &  }
\end{equation*}
By Lemma \ref{elementaryetale}, the extension $L\subseteq L'$ is \'{e}tale. The $\mathfrak{g}$-field $\mathbf{F}'$ is strict, so by Corollary \ref{dimconstants}, we have $[F':(F')^p]\leqslant p^e$. Since the extension $L\subseteq F'$ is separable and $[L:L^p]=p^e$, by the equivalence between $(1)$ and $(2)$ in Lemma \ref{elementaryetale}, the extension $L\subseteq F'$ is \'{e}tale. Then the extension $F'\subseteq L'$ is  \'{e}tale as well. By Proposition \ref{etalegext}, there is a $\mathfrak{g}$-derivation on $L'$ extending the one on $F'$ (hence also extending the one on $L$). By Lemma \ref{elementaryetale}, the extension $\mathbf{L}\subseteq \mathbf{L}'$ is $L^{\lambda}_{e,m}$-elementary and we are done.
\end{proof}
\noindent
We are ready to prove the main result of this subsection.
\begin{theorem}\label{qegdcf}
We have the following.
\begin{enumerate}
\item The theory $\gdcf_{\lambda}$ has quantifier elimination in the language $L^{\lambda}_{e,m}$.

\item Each model of $\gdf$ embeds into a model of $\gdcf$.

\item The theory $\gdcf$ is a model companion of the theory $\gdf$.

\item The theory $\gdcf_{\lambda}$ is a model completion of the theory $\gdf_{\lambda}$.
\end{enumerate}
\end{theorem}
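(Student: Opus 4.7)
The plan is to derive the four items in order, with (1) carrying the real content via Proposition \ref{elemamal}, and (2)--(4) following by standard bookkeeping using the lemmas already proved.

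For (1), I would invoke the familiar criterion: a theory $T$ has quantifier elimination iff, given any two models $\mathbf{L}_1,\mathbf{L}_2\models T$ sharing a common substructure $\mathbf{K}$, there is an elementary extension $\mathbf{L}_1\preccurlyeq\mathbf{L}_1'$ together with an $L^{\lambda}_{e,m}$-embedding $\mathbf{L}_2\hookrightarrow \mathbf{L}_1'$ fixing $\mathbf{K}$. Taking $T=\gdcf_{\lambda}$ and setting $\mathbf{F}:=\mathbf{L}_2$ (which is in particular a strict $\mathfrak{g}$-field, as required), this is exactly the output of Proposition \ref{elemamal}. So (1) is essentially a direct reading of that proposition through the QE criterion.

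For (2), start with $\mathbf{K}\models\gdf$. By Proposition \ref{gstrictexist} one obtains a strict $\mathfrak{g}$-extension $\mathbf{M}$; tracing through its proof, $\mathbf{M}$ is a $\mathfrak{g}$-amalgam of $\mathbf{K}$ with the canonical HS-field of Example \ref{canex}, and the canonical one has degree of imperfection exactly $e$, which is preserved by the étale amalgamation map (since \'{e}tale extensions preserve $[K:K^p]$). Now extend the $\mathfrak{g}$-structure from $\mathbf{M}$ to its separable closure $\mathbf{M}^{\sep}$ using Proposition \ref{etalegext}; strictness survives by Lemma \ref{constantdisjoint}(3) and the imperfection degree is unchanged because separable algebraic extensions are étale. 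The resulting strict, separably closed, degree-$e$ $\mathfrak{g}$-field is a model of $\gdcf$ extending $\mathbf{K}$.

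For (3), (2) gives the embedding half, so I only need to verify model completeness of $\gdcf$ in $L_{e,m}$. Let $\mathbf{M}\subseteq\mathbf{N}$ be an $L_{e,m}$-extension of models of $\gdcf$. Both being strict, $M^p=C_M$ and $N^p=C_N$, so $N^p\cap M=C_N\cap M=C_M=M^p$; by Lemma \ref{lamb}(1)(2) the canonical $\lambda$-expansions give an $L^{\lambda}_{e,m}$-extension of models of $\gdcf_{\lambda}$. By (1) this extension is $L^{\lambda}_{e,m}$-elementary, hence $L_{e,m}$-elementary. For (4), every model of $\gdf_{\lambda}$ is a strict $\mathfrak{g}$-field, so (2) provides an embedding into a model of $\gdcf$ whose unique $\lambda$-expansion is a model of $\gdcf_{\lambda}$, and Lemma \ref{lamb}(2) guarantees the embedding preserves $\lambda$. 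Combined with (1), model-companion plus QE yields that $\gdcf_{\lambda}$ is a model completion of $\gdf_{\lambda}$.

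The main delicate point is the imperfection bookkeeping in (2): one must confirm that the amalgam of Proposition \ref{gstrictexist} really yields $[M:M^p]=p^e$ and that passing to $M^{\sep}$ neither destroys strictness nor shifts the imperfection. Once that is in hand, (3) and (4) are formal consequences of (1) via Lemmas \ref{lamb} and \ref{constantdisjoint}, so the entire argument hinges on Proposition \ref{elemamal} together with this one verification.
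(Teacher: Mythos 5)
Your proposal is correct, and item (1) and item (3) follow the paper's own route exactly: the QE criterion applied to Proposition \ref{elemamal}, and then transfer of model completeness from $\gdcf_{\lambda}$ to $\gdcf$ via the observation that extensions of strict fields are automatically $\lambda$-extensions (Lemma \ref{lamb}). Where you diverge is in (2) and (4), and both divergences are legitimate. For (2) the paper does not build the model of $\gdcf$ by hand: it first makes $\mathbf{F}$ strict via Proposition \ref{gstrictexist}, takes some model $\mathbf{L}\models\gdcf_{\lambda}$ from Lemma \ref{consistent}, observes that $k$ with the trivial $\mathfrak{g}$-structure is a common $L^{\lambda}_{e,m}$-substructure, and then applies Proposition \ref{elemamal} once more to embed $\mathbf{F}$ into an elementary extension of $\mathbf{L}$ --- which is a model of $\gdcf_{\lambda}$ for free, with no imperfection-degree bookkeeping. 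Your direct construction (unwinding the proof of Proposition \ref{gstrictexist} to see that the amalgam has imperfection degree $e$ because the extension of the canonical field is \'{e}tale, then passing to the separable closure) is valid, but it relies on information not contained in the \emph{statement} of Proposition \ref{gstrictexist}, which is exactly the ``delicate point'' you flag; the paper's reuse of \ref{elemamal} sidesteps it. For (4) the paper cites Proposition \ref{lambdaap} (amalgamation of strict $\mathfrak{g}$-fields) together with (1), i.e.\ the ``model companion plus amalgamation'' characterization of model completions, whereas you use the equally standard ``model companion plus quantifier elimination'' characterization; both are fine, and yours has the small advantage of not re-invoking the amalgamation machinery.
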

\begin{proof}
For $(1)$, we use the criterion from \cite[Theorem 13.1]{sacks} (and Proposition \ref{elemamal}) exactly as in \cite[3.1]{Zieg2}.
\\
For $(2)$, let us take any $\mathfrak{g}$-field $\mathbf{F}$. By Proposition \ref{gstrictexist}, we may assume that $\mathbf{F}$ is strict. By Lemma \ref{consistent}, there is $\mathbf{L}\models \gdcf_{\lambda}$. Clearly, $k$ with the trivial $\mathfrak{g}$-structure is a common $L^{\lambda}_{e,m}$-substructure of both $\mathbf{F}$ and $\mathbf{L}$. By Proposition \ref{elemamal}, $\mathbf{F}$ embeds into an elementary extension of $\mathbf{L}$, which is clearly a model of  $\gdcf_{\lambda}$ as well.
\\
By $(1)$, $\gdcf_{\lambda}$ is model complete, so using Lemma \ref{lamb}(2) and Lemma \ref{elementaryetale}, we get that $\gdcf$ is model complete as well. By $(2)$, $\gdcf$ is a model companion of the theory $\gdf$, so we get $(3)$.
\\
By Proposition \ref{lambdaap} and the item $(1)$, we get $(4)$.
\end{proof}
\begin{remark}
\begin{enumerate}
\item The theory $\gdf$ does not have the amalgamation property. The theory $\gdcf$ does not have quantifier elimination and the theory $\gdcf$ is not a model completion of the theory $\gdf$.

\item We have (after taking $k=\Ff_p$)
$$\ga^e[m]-\dcf =\sch_{p,e,m},$$
where $\sch_{p,e,m}$ is the theory considered in \cite{K3}.

\item If we take the algebraic group $U$ from Example \ref{gitex}(2), then a $U(1)$-field is the same as a field with two derivations $\partial_1,\partial_2$ such that $\partial_1^{(p)}=\partial_2$ and $\partial_2^{(p)}=0$. Hence $U(1)-\dcf$ corresponds to Wood's theory $2-\dcf$, see \cite{Wo2}. It should be possible to find algebraic groups governing the iterative rules for Wood's theories $m-\dcf$ for an arbitrary $m$.

\item After dropping the iterativity assumptions rather strange things happen. In the case of characteristic zero, model companions exist and are analyzed in \cite{MS2}. In the case of positive characteristic, it is shown in \cite[Prop 7.2]{MS2} that (in our terminology) the theory of fields with $m$-truncated $e$-dimensional HS-derivations has a model companion if and only if $m=1$.

\end{enumerate}
\end{remark}

\subsection{Existentially closed $F$-fields}\label{sececf}
Let $L_e$ be the language of $k$-algebras with $e$-dimensional HS-derivations. The main model-theoretic advantage of HS-derivations (over truncated HS-derivations) is that we do not need to consider the extra operator $\lambda$ to get quantifier elimination results.
\\
We define two $L_e$-theories.
\begin{itemize}
\item Let $\fdf$ be the theory of $F$-fields.

\item Let $\fdcf$ be the theory $\fdf$ with the extra axioms for strict $F$-fields, and for separably closed fields of imperfection degree $e$.
\end{itemize}
\noindent
The main algebraic difference between $\mathfrak{g}$-derivations and $F$-derivations is given by the proposition below which generalizes \cite[Lemma 2.4]{Zieg2}. First we need an obvious lemma which is a general fact about group scheme actions.
\begin{lemma}\label{transport}
Let $f\in \aut(k)$ and let $\varphi:R\to S$ be an isomorphism of rings extending $f$. For any $\mathfrak{g}$-derivation $\mathbb{D}$ on $R$, we define:
$$\mathbb{D}^{\varphi}:S\to S[\mathbf{v}_m],\ \ \ \mathbb{D}^{\varphi}:=\varphi[\mathbf{v}_m]\circ \mathbb{D}\circ \varphi^{-1}.$$
Then $\mathbb{D}^{\varphi}$ is a $\mathfrak{g}^{f}$-derivation, where $\mathfrak{g}^{f}$ is the group scheme $\mathfrak{g}$ twisted by $f$.
\end{lemma}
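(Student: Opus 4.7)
The plan is to work at the level of comodules/Hopf algebras, where the statement becomes a piece of pure functoriality. Writing $H = k[\mathbf{v}_m]$ for the Hopf algebra of $\mathfrak{g}$ with comultiplication $c \colon H \to H \otimes_k H$, a $\mathfrak{g}$-derivation on $R$ is the same datum as a $k$-algebra map $\mathbb{D} \colon R \to R \otimes_k H$ making the counit and the coassociativity (i.e.\ the $\mathfrak{g}$-iterativity) diagrams of Remark \ref{gderint} commute. The twisted group scheme $\mathfrak{g}^{f}$ corresponds to the Hopf algebra $H^{f}$ whose underlying ring equals $H$ but whose $k$-algebra structure is pulled back along $f \colon k \to k$; there is a canonical $f$-semilinear bijection $\iota \colon H \to H^{f}$, and by definition the comultiplication of $H^{f}$ is
$$c^{f} = (\iota \otimes \iota) \circ c \circ \iota^{-1}.$$

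First I would make the transport precise. Since $\varphi \colon R \to S$ is $f$-semilinear, the composite
$$\mathbb{D}^{\varphi} := (\varphi \otimes \iota) \circ \mathbb{D} \circ \varphi^{-1} \colon S \longrightarrow S \otimes_k H^{f}$$
is a genuine $k$-algebra map: $\varphi^{-1}$ is $f^{-1}$-semilinear, $\mathbb{D}$ is $k$-linear, and $\varphi \otimes \iota$ is $f$-semilinear, so the twists cancel. Next I would verify the two defining properties by a diagram chase. The counit square for $\mathbb{D}^{\varphi}$ reduces, after conjugation by $\varphi^{-1}$ and using that the counit of $H^{f}$ is the counit of $H$ precomposed with $\iota^{-1}$, to the counit square for $\mathbb{D}$. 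Coassociativity reads
$$(\mathbb{D}^{\varphi} \otimes \id_{H^{f}}) \circ \mathbb{D}^{\varphi} = (\id_{S} \otimes c^{f}) \circ \mathbb{D}^{\varphi},$$
and substituting the definitions of $\mathbb{D}^{\varphi}$ and $c^{f}$ exhibits both sides as the image under $\varphi \otimes \iota \otimes \iota$ of the two sides of the iterativity square for $\mathbb{D}$, which coincide by hypothesis.

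Equivalently, one may argue geometrically: the isomorphism $\varphi$ corresponds to an isomorphism of schemes $\spec(S) \to \spec(R)$ over $\spec(f) \colon \spec(k) \to \spec(k)$, and base changing the action $\mathfrak{g} \times \spec(R) \to \spec(R)$ along $\spec(f)$ gives an action $\mathfrak{g}^{f} \times \spec(S) \to \spec(S)$; unraveling shows that this action is precisely $\mathbb{D}^{\varphi}$. Either way, the argument is purely formal --- it is the functoriality of base change applied to the coaction formalism --- so I do not expect any essential obstacle. The one point requiring some care, rather than genuine difficulty, is the bookkeeping of twists: the maps $\varphi$ and $\iota$ are only $f$-(semi)linear, so one must check that each tensor product over $k$ used above makes sense and that the resulting composites are honestly $k$-linear.
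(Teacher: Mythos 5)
The paper gives no proof of Lemma \ref{transport} at all --- it is introduced as ``a very general fact about group scheme actions'' and left to the reader --- so there is nothing to compare against line by line. Your argument is correct and supplies exactly the formal content the authors are invoking: rewriting a $\mathfrak{g}$-derivation as a Hopf-algebra coaction $\mathbb{D}\colon R\to R\otimes_k H$ and observing that conjugation by the $f$-semilinear pair $(\varphi,\iota)$ carries the counit and coassociativity diagrams for $(H,c)$ to those for $(H^f,c^f)$; the geometric reformulation as base change of the action along $\spec(f)$ is the same statement in the language the paper actually uses. The only blemish is a small bookkeeping slip: the counit of $H^f$ is $f\circ\varepsilon\circ\iota^{-1}$ rather than $\varepsilon\circ\iota^{-1}$ (the latter is only $f^{-1}$-semilinear), but this does not affect the diagram chase, and it is precisely the kind of twist-tracking you flag yourself as requiring care.
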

\begin{prop}\label{strictext}
 Let $\mathbf{K}=(K,\mathbb{D})$ be an $F$-field. Then there is a smallest strict $F$-field extending $\mathbf{K}$.
\end{prop}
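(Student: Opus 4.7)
The plan is to construct the smallest strict $F$-extension $\mathbf{K}^s$ by iteratively adjoining $p$-th roots of constants, generalizing Ziegler's construction in \cite[Lemma 2.4]{Zieg2}. Fix an algebraic closure of $K$ and write $C$ for the field of constants of $\mathbf{K}$; by Definition \ref{firstdefstrict} we have $K^p\subseteq C$. Set $L_0:=K(C^{1/p})$ and extend $\mathbb{D}$ to a map $\mathbb{D}_0\colon L_0\to L_0\llbracket \mathbf{X}\rrbracket$ by prescribing $\mathbb{D}_0(c^{1/p}):=c^{1/p}$ (the constant power series) for every $c\in C$. Iterate: put $L_{n+1}:=L_n(C_{L_n}^{1/p})$ with the analogous extension $\mathbb{D}_{n+1}$, and let $\mathbf{K}^s:=(L_\infty,\mathbb{D}_\infty)$ where $L_\infty:=\bigcup_n L_n$.

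The main technical step is to verify that each $\mathbb{D}_n$ really defines an $F$-derivation. Since $L_0\llbracket \mathbf{X}\rrbracket$ has characteristic $p$, any element has at most one $p$-th root, so the constant series $c^{1/p}$ is forced to be the unique $p$-th root of $\mathbb{D}(c)=c$ in $L_0\llbracket \mathbf{X}\rrbracket$. This consistency with the defining relations $(c^{1/p})^p=c$ makes $\mathbb{D}_0$ a well-defined $k$-algebra section (it can also be realized as the base change of $\mathbb{D}$ along the canonical isomorphism $L_0\otimes_K K\llbracket \mathbf{X}\rrbracket_{\mathbb{D}}\cong L_0\llbracket \mathbf{X}\rrbracket$ sending $c^{1/p}\otimes 1$ to the constant series $c^{1/p}$). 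For $F$-iterativity, Remark \ref{gderint} reformulates the axiom as commutativity of a diagram of ring maps, so it suffices to check it on generators. On $K$ it holds by hypothesis; on $c^{1/p}$, which is absolutely constant by construction, the identity $D_{\mathbf{j}}\circ D_{\mathbf{i}}=\sum_{\mathbf{k}}c^{\mathbf{k}}_{\mathbf{i},\mathbf{j}}D_{\mathbf{k}}$ from Remark \ref{gderint2} reduces, for $\mathbf{i},\mathbf{j}\neq\mathbf{0}$, to $0=c^{\mathbf{0}}_{\mathbf{i},\mathbf{j}}\cdot c^{1/p}$. The vanishing of $c^{\mathbf{0}}_{\mathbf{i},\mathbf{j}}$ holds because it is the coefficient of $\mathbf{v}^{\mathbf{i}}\mathbf{w}^{\mathbf{j}}$ in $(\mathbf{w}+\mathbf{v}+\mathbf{s})^{\mathbf{0}}=1$ (cf.\ the proof of Lemma \ref{newton}).

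Strictness of $\mathbf{L}_\infty$ and minimality are then short. For strictness, compute $L_0^p=K^p\cdot C=C$, and inductively $L_{n+1}^p=L_n^p\cdot C_{L_n}=C_{L_n}$ using $L_n^p\subseteq C_{L_n}$; hence $L_\infty^p=\bigcup_n C_{L_n}=C_{L_\infty}$, which is strictness. For minimality, let $\mathbf{M}$ be any strict $F$-extension of $\mathbf{K}$. Then $C\subseteq C_M=M^p$, so every $c\in C$ has its unique $p$-th root $c^{1/p}$ inside $M$, and $\mathbb{D}_M(c^{1/p})$ must equal the unique $p$-th root of $\mathbb{D}_M(c)=c$ in $M\llbracket \mathbf{X}\rrbracket$, namely the constant series $c^{1/p}$. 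This yields a canonical $F$-embedding $(L_0,\mathbb{D}_0)\hookrightarrow\mathbf{M}$ over $\mathbf{K}$; iterating gives $\mathbf{L}_n\hookrightarrow\mathbf{M}$ for every $n$, and hence $\mathbf{K}^s=\mathbf{L}_\infty\hookrightarrow\mathbf{M}$ as desired, with uniqueness up to $F$-isomorphism over $\mathbf{K}$ following from the resulting universal property.

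The principal obstacle is checking that extending $\mathbb{D}$ across the purely inseparable steps $L_n\subseteq L_{n+1}$ preserves the $F$-iterativity rule; once one exploits the vanishing of $c^{\mathbf{0}}_{\mathbf{i},\mathbf{j}}$ for $\mathbf{i},\mathbf{j}\neq\mathbf{0}$ and the uniqueness of $p$-th roots in characteristic $p$, everything else falls into place by direct computation.
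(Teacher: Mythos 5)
There is a genuine gap at the heart of your argument: you treat every element of the field of constants $C$ as an \emph{absolute} constant. The field of constants is $C=\ker(D_{(1,0,\ldots,0)})\cap\ldots\cap\ker(D_{(0,\ldots,0,1)})$, and Lemma \ref{absconst} only identifies it with the absolute constants of the truncation $\mathbb{D}[1]$; it does \emph{not} give $D_{\mathbf{i}}(c)=0$ for all $\mathbf{i}\neq\mathbf{0}$. Already for the canonical iterative HS-derivation on $\Ff_p(t)$ one has $D_p(t^p)=1\neq 0$, so $t^p$ is a constant with $\mathbb{D}(t^p)=t^p+X^p$, not the constant series $t^p$. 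Consequently your prescription $\mathbb{D}_0(c^{1/p}):=c^{1/p}$ (constant series) is incompatible with the relation $(c^{1/p})^p=c$: any ring-homomorphic extension must satisfy $\mathbb{D}_0(c^{1/p})^p=\mathbb{D}_0(c)=\mathbb{D}(c)=\sum_{\mathbf{i}}D_{\mathbf{i}}(c)\mathbf{X}^{\mathbf{i}}$, which is not the constant series $c$ in general. So $\mathbb{D}_0$ is not well defined, and the subsequent iterativity check, which rests on ``$c^{1/p}$ is absolutely constant by construction,'' collapses. The only possible extension is $D'_{\mathbf{i}}(c^{1/p})=(D_{p\mathbf{i}}(c))^{1/p}$, which is typically nonzero for $\mathbf{i}\neq\mathbf{0}$.

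That forced formula is where the real work lies, and it is exactly what the paper's proof addresses: one shows that the maps $(D_{p\mathbf{i}}|_C)_{\mathbf{i}}$ define a Frobenius-twisted iterative structure on $C$ (Lemma \ref{deronc}(2), applied to each truncation $F[m]$ and using $\mathfrak{g}/\mathfrak{g}[1]\cong\mathfrak{g}[m-1]^{\fr}$ acting on $\spec(C)$), and then transports this structure along the isomorphism $\fr_C^{-1}:C\to C^{1/p}$ (Lemma \ref{transport}) to obtain an honest $F$-derivation on $C^{1/p}$ extending $\mathbb{D}$. Your surrounding architecture --- iterating $L_{n+1}=L_n(C_{L_n}^{1/p})$, taking the union, the computation $L_\infty^p=C_{L_\infty}$ for strictness, and the uniqueness-of-$p$-th-roots argument for minimality --- is sound and matches the intended construction, but the central step must be replaced as above.
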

\begin{proof}
Let $C$ be the field of constants of $\mathbf{K}$. It is enough to show that there is a unique $F$-derivation $\mathbb{D}'$ on $C^{1/p}$ extending $\mathbb{D}$. By Lemma \ref{deronc}(1), $C$ is an $F$-subfield of $K$. For $\mathbf{i}\in \Nn^e$ and $a\in C^{1/p}$, we have the only option (as in \cite[Lemma 2.4]{Zieg2}):
$$D'_{\mathbf{i}}(a):=(D_{p\mathbf{i}}(a^p))^{1/p}.$$
Clearly, each $D'_{\mathbf{i}}$ extends $D_{\mathbf{i}}$. We need to show that $\mathbb{D}'=(D'_{\mathbf{i}})_{\mathbf{i}\in \Nn^e}$ is an $F$-derivation. By Lemma \ref{deronc}(2), the sequence of maps
$$\mathbb{D}'':=(D_{p\mathbf{i}}(b))_{\mathbf{i}\in [p^m]^e}$$
is an $F[m-1]^{\fr}$-derivation on $C$. It is easy to see that $\mathbb{D}'[m-1]$ coincides with $\mathbb{D}''$ ``transported'' to $C^{1/p}$ using the ring isomorphism $\fr^{-1}_C:C\to C^{1/p}$. By Lemma \ref{transport}, $\mathbb{D}'[m-1]$ is an $F[m-1]$-derivation. Since it happens for all $m\in \Nn$, we get that $\mathbb{D}'$ is an $F$-derivation.
\end{proof}
\noindent
Proceeding similarly as in the proof of \cite[Prop. 2.6]{Zieg2} (or Proposition \ref{lambdaap}) and using Proposition \ref{strictext} one shows the following.
\begin{prop}\label{apf}
The class of $F$-fields has the amalgamation property.
\end{prop}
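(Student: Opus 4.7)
The plan is to reduce to the case of strict $F$-fields, where the argument of Proposition \ref{lambdaap} can be applied with only minor modifications. The key new ingredient is Proposition \ref{strictext}, which gives us a canonical strict $F$-field extension of any $F$-field.

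Given $F$-field extensions $\mathbf{K} \subseteq \mathbf{L}_1$ and $\mathbf{K} \subseteq \mathbf{L}_2$, I would first apply Proposition \ref{strictext} to obtain smallest strict $F$-field extensions $\mathbf{L}_i'$ of $\mathbf{L}_i$ for $i=1,2$, and $\mathbf{K}'$ of $\mathbf{K}$. Since each $\mathbf{L}_i'$ is a strict $F$-field extension of $\mathbf{K}$, the minimality of $\mathbf{K}'$ produces canonical $F$-field embeddings $\mathbf{K}' \hookrightarrow \mathbf{L}_i'$ over $\mathbf{K}$. It therefore suffices to amalgamate the strict $F$-fields $\mathbf{L}_1'$ and $\mathbf{L}_2'$ over the strict $F$-field $\mathbf{K}'$, and then restrict the resulting $F$-extension to amalgamate $\mathbf{L}_1, \mathbf{L}_2$ over $\mathbf{K}$.

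To amalgamate in the strict case, I would mimic the proof of Proposition \ref{lambdaap}. Using Proposition \ref{etalegext} (applied level-by-level, cf.\ Remark \ref{etalegextsch}), I may assume that $L_1'$ and $L_2'$ are separably closed. Then $(K')^{\sep}$ sits inside both as an $F$-subfield, and is strict by Lemma \ref{constantdisjoint}(3), so I may further assume $K'$ itself is separably closed. Embed $L_1', L_2'$ in a large ambient field $\Omega$ so that they are algebraically disjoint over $K'$. By Lemma \ref{constantdisjoint}(2), applied to each $F[m]$-structure, the extension $K' \subseteq L_i'$ is separable; since $K'$ is separably closed, it is regular, whence $L_1'$ and $L_2'$ are linearly disjoint over $K'$ and $L_1' \otimes_{K'} L_2'$ is a domain. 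Applying Lemma \ref{forap} at each truncation level $m$ yields compatible $F[m]$-structures on $L_1' \otimes_{K'} L_2'$, and passing to the direct limit produces an $F$-derivation on it extending the ones on $L_1'$ and $L_2'$. Finally, Proposition \ref{etalegext} (again level-by-level) extends this $F$-derivation uniquely to the field of fractions of $L_1' \otimes_{K'} L_2'$, which is the desired amalgamation.

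The main subtlety I anticipate is the bookkeeping with direct limits: one must check that the $F[m]$-structures produced by Lemma \ref{forap} on $L_1' \otimes_{K'} L_2'$ are compatible across different $m$, so that they genuinely assemble into an $F$-derivation in the sense $F = \coli F[m]$. This should be formal once one observes that the construction in Lemma \ref{forap} is itself natural in the group scheme, and the truncation maps $\mathfrak{g}[m+1] \to \mathfrak{g}[m]$ induced by $F$ intertwine the structures. The other potential concern, verifying that Proposition \ref{strictext} applied to $\mathbf{L}_i$ receives the embedding of $\mathbf{K}'$, is immediate from the stated minimality property of the smallest strict extension.
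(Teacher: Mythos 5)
Your proposal is correct and follows exactly the route the paper indicates: reduce to the strict case via the smallest strict extension of Proposition \ref{strictext}, then run the amalgamation argument of Proposition \ref{lambdaap} (separable closure, linear disjointness, Lemma \ref{forap}, \'{e}tale extension to the fraction field), applied level-by-level over the truncations $F[m]$. The compatibility across truncation levels that you flag is indeed handled by the uniqueness clause in Lemma \ref{forap}, so there is no gap.
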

\noindent
We can conclude now as in Section \ref{sececg}.
\begin{theorem}
The theory $\fdcf$ is a model completion of the theory $\fdf$ (so it eliminates quantifiers).
\end{theorem}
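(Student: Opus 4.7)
The plan is to replay the proof of Theorem \ref{qegdcf} in the simpler language $L_e$, now using the two ingredients available for full HS-derivations: amalgamation for \emph{all} $F$-fields (Proposition \ref{apf}) and the existence of a smallest strict extension of any $F$-field (Proposition \ref{strictext}). The essential simplification over the truncated setting is that strictness no longer has to be encoded by an auxiliary operator, because amalgamation does not require the base to be strict.

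First I would establish the consistency of $\fdcf$ exactly as in Lemma \ref{consistent}: take the canonical $F$-derivation on $k((\mathbf{X}))$, which is strict of imperfection degree $e$ by Proposition \ref{canstrict}, extend it to the separable closure via Proposition \ref{etalegext}, and note that strictness and imperfection degree are preserved by Lemma \ref{constantdisjoint}(3) because separable algebraic extensions are \'{e}tale.

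The main step is an embedding lemma in the style of Proposition \ref{elemamal}: given $\mathbf{L}\models\fdcf$, an arbitrary $F$-field $\mathbf{F}$, and a common $L_e$-substructure $\mathbf{K}$ of both, one shows that $\mathbf{F}$ embeds over $\mathbf{K}$ into an $L_e$-elementary extension of $\mathbf{L}$. By Remark \ref{defgen}(1) and Proposition \ref{etalegext} one may assume that $\mathbf{K}$ is a field. Using Proposition \ref{apf}, amalgamate $\mathbf{F}$ with $\mathbf{L}$ over $\mathbf{K}$ inside an $F$-field $\mathbf{F}_0$, then replace $\mathbf{F}_0$ by its smallest strict extension $\mathbf{F}_0^{\mathrm{str}}$ from Proposition \ref{strictext}; since $\mathbf{L}$ is already strict, it remains a strict $F$-subfield of $\mathbf{F}_0^{\mathrm{str}}$. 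By Lemma \ref{constantdisjoint}(2) the field extension $L\subseteq F_0^{\mathrm{str}}$ is separable, so Delon's Claim 2.2 from \cite{Del1} (invoked exactly as in the proof of Proposition \ref{elemamal}) provides an elementary field extension $L\preccurlyeq L'$ and a field embedding $\Upsilon\colon F_0^{\mathrm{str}}\to L'$ over $L$. Applying Proposition \ref{strictetale} at every truncation level shows that $L\subseteq F_0^{\mathrm{str}}$ is \'{e}tale, while Lemma \ref{elementaryetale} ensures $L\subseteq L'$ is \'{e}tale, so $F_0^{\mathrm{str}}\subseteq L'$ is \'{e}tale as well. Proposition \ref{etalegext} then lifts the $F$-derivation from $F_0^{\mathrm{str}}$ uniquely to $L'$, and Lemma \ref{elementaryetale} again yields $\mathbf{L}\preccurlyeq\mathbf{L}'$ in $L_e$.

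Having proved the embedding lemma, the rest is formal. Sacks's criterion \cite[Theorem 13.1]{sacks} yields quantifier elimination for $\fdcf$ in $L_e$; every $F$-field embeds into a model of $\fdcf$ by applying the embedding lemma with $\mathbf{K}=k$ (carrying the trivial $F$-structure) and any $\mathbf{L}\models\fdcf$ from the consistency step; combined with Proposition \ref{apf}, this makes $\fdcf$ a model completion of $\fdf$. The main subtlety I expect is verifying that the reduction to the strict case does not damage the structure of $\mathbf{L}$ inside the amalgam: the point is that $L$ itself is not enlarged by passing to $\mathbf{F}_0^{\mathrm{str}}$, so the $L$-constants computed in $\mathbf{F}_0^{\mathrm{str}}$ still coincide with $L^p$ and the imperfection degree of $L$ is unchanged, which is exactly the hypothesis needed to apply Proposition \ref{strictetale} to $L\subseteq F_0^{\mathrm{str}}$.
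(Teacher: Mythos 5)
Your proposal is correct and follows exactly the route the paper intends: the paper's own justification is simply ``conclude as in Section \ref{sececg}'', i.e.\ replay the proof of Theorem \ref{qegdcf} using Proposition \ref{apf} and Proposition \ref{strictext} in place of the $\lambda$-machinery, which is precisely what you do. Your fleshed-out embedding lemma (amalgamate via Proposition \ref{apf}, pass to the smallest strict extension, then use separability, Delon's claim, Proposition \ref{strictetale} and Proposition \ref{etalegext}) is the intended argument, correctly executed.
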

\begin{remark}
\begin{enumerate}
\item The extra property which makes the theory $\fdf$ nicer than the theory $\gdf$ is the existence of the \emph{smallest} strict extensions (Proposition \ref{strictext}). It gives the amalgamation property for \emph{all} (i.e. not necessarily strict) $F$-fields (Proposition \ref{apf}) and the quantifier elimination for $\fdcf$.

\item In this context, quantifier elimination for $\fdcf$ implies elimination of imaginaries for $\fdcf$ exactly as in Section 4 of \cite{Zieg2}.
\end{enumerate}
\end{remark}

\begin{theorem}\label{chain}
The theories $F[m]-\dcf$ form an increasing chain and we have
$$F-\dcf = \bigcup_{m=1}^{\infty}F[m]-\dcf.$$
\end{theorem}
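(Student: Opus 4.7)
The plan is to deduce both assertions directly from the description $F = \coli F[m]$ as a direct limit of finite group schemes, together with the explicit form of the axioms of $\fdcf$ and $F[m]-\dcf$. To fix conventions, I will view every theory in the common language $L_e$ by interpreting $L_{e,m}$-sentences via the inclusion $L_{e,m} \subseteq L_e$ of languages, so that the $F[m]$-derivation symbols $D_{\mathbf{i}}$ with $\mathbf{i} \in [p^m]^e$ are naturally a subset of the $F$-derivation symbols $D_{\mathbf{i}}$ with $\mathbf{i} \in \Nn^e$.

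For the chain assertion, the plan is to fix $m_1 \leqslant m_2$ and take $\mathbf{K} \models F[m_2]-\dcf$. The group-scheme inclusion $F[m_1] \hookrightarrow F[m_2]$ coming from the direct system shows that restricting an $F[m_2]$-derivation to the symbols $D_{\mathbf{i}}$ with $\mathbf{i} \in [p^{m_1}]^e$ yields an $F[m_1]$-derivation. Strictness is a condition on $D_{(1,0,\ldots,0)}, \ldots, D_{(0,\ldots,0,1)}$ alone, so it is already encoded at level $1$ and a fortiori at level $m_1$; the field-theoretic axioms (separably closed of imperfection degree $e$) are untouched by truncation. Hence $\mathbf{K} \models F[m_1]-\dcf$, which is the increasing chain property.

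For the equality $\fdcf = \bigcup_{m=1}^\infty F[m]-\dcf$, I will verify both inclusions, which amount to a bit of bookkeeping. If $\mathbf{K} \models \fdcf$, then $\mathbf{K}$ is an $F$-field, hence in particular an $F[m]$-field for each $m$ by the very definition of an $F$-derivation as a direct system of $F[m]$-derivations; the remaining axioms (strictness, separably closed, imperfection degree $e$) are identical in $\fdcf$ and $F[m]-\dcf$, so $\mathbf{K} \models F[m]-\dcf$ for every $m$. Conversely, if $\mathbf{K} \models F[m]-\dcf$ for every $m$, then for each $m$ the structure $\mathbf{K}$ carries an $F[m]$-derivation; compatibility between the various levels is automatic, since the symbols $D_{\mathbf{i}}$ are fixed in $L_e$ and the $F[m_1]$-derivation is literally the restriction of the $F[m_2]$-derivation for any $m_2 \geqslant m_1$. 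Therefore $\mathbf{K}$ is an $F$-field, and the remaining axioms transfer verbatim, giving $\mathbf{K} \models \fdcf$.

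The only genuine obstacle is notational: making sure the various languages are consistently interpreted so that ``$\fdcf = \bigcup_m F[m]-\dcf$'' has a well-defined meaning as an equality of $L_e$-theories. Once this convention is in place, the result is a routine consequence of three facts already established: that $F = \coli F[m]$, that strictness is a level-$1$ condition (visible from Definition \ref{firstdefstrict}), and that the separably closed and imperfection-degree axioms are field-theoretic and therefore independent of $m$.
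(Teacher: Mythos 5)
Your proposal is correct and takes essentially the same route as the paper, whose entire proof is ``It follows just by inspecting the axioms of the theories in question''; your write-up simply carries out that inspection explicitly (truncation of the derivation, strictness being a level-$1$ condition, and the purely field-theoretic axioms being independent of $m$).
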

\begin{proof}
It follows just by inspecting the axioms of the theories in question.
\end{proof}

\subsection{Bi-interpretability with a theory of separably closed fields}
Clearly, each model of $\fdcf$ restricts to a model of $\scf_{p,e}$. In this subsection, we discuss the opposite problem: can any model of $\scf_{p,e}$ be expanded to a model of $\fdcf$? The same question can be asked for $\mathfrak{g}$ in place of $F$. Ziegler showed in \cite{Zieg2} that the answer is affirmative for $F=\widehat{\ga^e}$. The second author showed the same in \cite{K3} for $\mathfrak{g}=\ga^e[m]$. In this subsection, we generalize the above results to the case when $F$ is of the form $\widehat{G}$ and $\mathfrak{g}$ of the form $G[m]$.

We actually show more, i.e. we will see that (after adding some extra constants) the theory $G-\dcf$ is \emph{extension by definitions} (see \cite[page 59]{shologic}) of the theory $\scf_{p,e}$.

By $L$, we denote the language of $k$-algebras, and by $\scf_{p,e}$, the theory of separably closed $k$-algebras with the degree of imperfection $e$
in the language $L$ (so our notation does not reflect the dependence on $k$, we hope it will not cause any confusion). Recall that $L_e$ denotes the language of $k$-algebras with $e$-dimensional HS-derivations. We introduce two new languages (obtained after adding $e$ extra constant symbols):
$$L^{\mathbf{b}}:=L\cup\lbrace b_1,\ldots,b_e\rbrace,\qquad L_e^{\mathbf{b}}:=L_e\cup\lbrace b_1,\ldots,b_e\rbrace.$$
Let $\beta$ be a sentence in the language $L^{\mathbf{b}}$ saying that $b_1,\ldots,b_e$ form a $p$-basis.
Now we add $\beta$ to the theory $\scf_{p,e}$ and to the theory $F-\dcf$ to obtain the theory
$$\scf_{p,e}^{\mathbf{b}}:=\scf_{p,e}\;\cup\;\lbrace\beta\rbrace$$
in the language $L^{\mathbf{b}}$, and the theory
$$F-\dcf^{\mathbf{b}}:=F-\dcf\;\cup\;\lbrace\beta\rbrace$$
in the language $L_e^{\mathbf{b}}$.
\begin{lemma}\label{finsep}
There are $x_1,\ldots,x_e\in k(G)$ algebraically independent over $k$ such that the field extension $k(x_1,\ldots,x_e)\subseteq k(G)$ is finite and separable. 
\end{lemma}
\begin{proof}
Let $x_1,\ldots,x_e\in \mathcal{O}_G$ be a sequence of local parameters (see Example \ref{cangex}). By Example \ref{cangex}, \cite[30.6(ii)]{mat} and Proposition on page 276 of \cite{mat0}, $\{x_1,\ldots,x_e\}$ is a $p$-basis of $\mathcal{O}_G$, so it is also a $p$-basis of $k(G)$. Hence the field extension $k(x_1,\ldots,x_e)\subseteq k(G)$ is finite and separable.
\end{proof}
Let $x_1,\ldots,x_e$ be as in Lemma \ref{finsep}. By Abel's theorem, there is $y\in k(G)$ such that $k(G)=k(x_1,\ldots,x_e,y)$. Let $H(X_1,\ldots,X_e,Y)\in k[X_1,\ldots,X_e,Y]$ be such that $H(x_1,\ldots,x_e,y)=0$ and the polynomial $H(x_1,\ldots,x_e,Y)$ is irreducible.

We need to add one more constant to the languages we consider to obtain the following languages:
$$L^{{\mathbf{b}},c}:=L\cup\lbrace b_1,\ldots,b_e,c\rbrace,\qquad L_e^{{\mathbf{b}},c}:=L_e\cup\lbrace b_1,\ldots,b_e,c\rbrace.$$
The meaning of this extra constant $c$ is that it generates the field $k(G)$ over $k$ extended by the chosen $p$-basis.
Formally, we specify one more axiom $\gamma$ in the language $L^{{\mathbf{b}},c}$  
$$\gamma:=\ \ \beta \ \wedge  \ (H(b_1,\ldots,b_e,c)=0).$$
We define the following theories.
$$\scf_{p,e}^{{\mathbf{b}},c}:=\scf_{p,e}\;\cup\;\lbrace \gamma\rbrace,\ \ F-\dcf^{{\mathbf{b}},c}:=F-\dcf\;\cup\;\lbrace \gamma\rbrace .$$

\begin{theorem}\label{biin}
The theory $G-\dcf^{{\mathbf{b}},c}$ is an extension by definitions of the theory $\scf^{{\mathbf{b}},c}_{p,e}$.
\end{theorem}
\begin{proof}
Take $x_1,\ldots,x_e\in k(G)$ as in Lemma \ref{finsep}, denote $(x_1,\ldots,x_e)$ by $\mathbf{x}$ and let $y\in k(G)$ be separable algebraic over $k(\mathbf{x})$ such that $k(\mathbf{x},y)=k(G)$ (such $y$ exists by Lemma \ref{finsep} and Abel's theorem). Then for any $\mathbf{i},\mathbf{j}\in \Nn^e$, there are polynomials $F_{\mathbf{j},\mathbf{i}},H_{\mathbf{j},\mathbf{i}}\in k[\mathbf{X},Y]$ such that for the canonical $F$-derivation
$\mathbb{D}^{F}=(D_{\mathbf{j}}^F)_{\mathbf{j}\in\mathbb{N}^e}$ on $k(G)$, we have
$$D_{\mathbf{j}}^F(\mathbf{x}^{\mathbf{i}})=\frac{F_{\mathbf{j},\mathbf{i}}(\mathbf{x},y)}{H_{\mathbf{j},\mathbf{i}}(\mathbf{x},y)}\in k(G).$$
We describe now an extension by definitions of the theory $\scf_{p,e}^{{\mathbf{b}},c}$ coinciding with the theory $G-\dcf^{{\mathbf{b}},c}$. To ease the notation, we will also denote by $\mathbf{b}$ the tuple (of constant symbols or elements of a model) $(b_1,\ldots,b_e)$.
Take $\mathbf{j}=(j_1,\ldots,j_e)\in\mathbb{N}^e$, for every $n\geqslant \max\lbrace j_1,\ldots,j_e\rbrace$
we add to the theory $\scf_{p,e}^{{\mathbf{b}},c}$ the following defining axiom
\begin{equation}\label{scfD}\tag{$\clubsuit$}
D_{\mathbf{j}}(x)=y \leftrightarrow \left(\exists^{\mathbf{i}\in[p^n]^e}\; \alpha_{\mathbf{i}}\right)
\left(x=\sum\limits_{\mathbf{i}\in[p^n]^e}\alpha_{\mathbf{i}}^{p^n}\mathbf{b}^{\mathbf{i}}
\;\wedge\; y=\sum\limits_{\mathbf{i}\in[p^n]^e}\alpha_{\mathbf{i}}^{p^n}\frac{F_{\mathbf{j},\mathbf{i}}(\mathbf{b},c)}{H_{\mathbf{j},\mathbf{i}}(\mathbf{b},c)} \right).
\end{equation}
Take any $(K,\mathbf{b},c)\models \scf_{p,e}^{{\mathbf{b}},c}$.  It is not hard to verify that the defining axioms (\ref{scfD}) determine well-defined functions on $K$. The axioms $\beta,\gamma$ guarantee that
 $$k(\mathbf{b},c)\cong_k k(G)$$ 
 and that $\mathbb{D}:=(D_{\mathbf{j}})_{\mathbf{j}\in \Nn^e}$ restricted to $k(\mathbf{b},c)$ corresponds to the canonical $G$-derivation on $k(G)$.
 Because $\mathbf{b}$ is a $p$-basis of $K$, the extension
 $k(\mathbf{b})\subseteq K$ is \'{e}tale
 (see \cite[Theorem 26.8]{mat}). Hence the extension $k(\mathbf{b},c)\subseteq K$ is also \'{e}tale. Due to Proposition \ref{etalehs}, we can extend the canonical $G$-derivation from $k(G)$ to the field $K$. Let $\mathbb{D}'=\left(D'_{\mathbf{j}}\right)_{\mathbf{j}\in\mathbb{N}^e}$
 denote the $G$-derivation on $K$ extending the canonical $G$-derivation on $k(G)$.

 Note that $D'_{\mathbf{j}}(x)=D_{\mathbf{j}}(x)$ for every $x\in K$, hence $\mathbb{D}'=\mathbb{D}$.
 Moreover the canonical $G$-derivation on $k(G)$ is strict, so also $\mathbb{D}'|_{k(\mathbf{b},c)}$ is strict.
 Lemma \ref{constantdisjoint} implies that $\mathbb{D}'=\mathbb{D}$ is strict, so $(K,\mathbb{D},\mathbf{b},c)\models G-\dcf^{{\mathbf{b}},c}$.
\end{proof}
\noindent
\begin{remark}
In many cases, the theory $G-\dcf^{\mathbf{b}}$ is an extension by definitions of the theory $\scf^{{\mathbf{b}}}_{p,e}$, so in such cases the constant symbol $c$ is not necessary. For example, it is the case for $G$ being the group of Witt vectors, see \cite{Hoff2}.
\end{remark}
It is unclear to us how to proceed in the case of an arbitrary formal group $F$. The crucial question is whether the canonical derivation on $k((\mathbf{X}))$ can be restricted to $k(\mathbf{X})$ or to $k(\mathbf{X})^{\sep}$. This question has been investigated in \cite{HK2} (it is related to Matsumura's \emph{integrability question}). To prove a partial result (Theorem \ref{onedimext}), we need the following lemma.
\begin{lemma}\label{isodef}
Assume that $F_1$ and $F_2$ are one-dimensional formal group laws over $k$ such that $F_1\cong F_2$. Then the theory $F_1-\dcf$ is an extension by definitions of the theory $F_2-\dcf$.
\end{lemma}
\begin{proof}
Let $f\in Xk\llbracket X\rrbracket$ be an isomorphism between $F_1$ and $F_2$. For any $F_1$-derivation $\mathbb{D}:K\to K\llbracket X\rrbracket$, the following composition
\begin{equation*}
\xymatrix{K \ar[r]^{\mathbb{D}}  & K\llbracket X\rrbracket \ar[r]^{\ev_f}  & K\llbracket X\rrbracket}
\end{equation*}
is an $F_2$-derivation. It is easy to see that for each positive integer $n$, there are $c_{n,1},\ldots,c_{n,n}\in k$ such that if $\mathbb{D}=(D_i)_{i\in \Nn}$ then the corresponding $F_2$-derivation $\ev_f\circ \mathbb{D}$ is of the form
$$(\id,c_{1,1}D_1,c_{2,1}D_1+c_{2,2}D_2,\ldots),$$
hence the result follows.
\end{proof}
Using \cite{LubinAns} and \cite{Bakur} (which we needed to prove the crucial \cite[Thm. 4.3]{HK2}), we can now show the following.
\begin{theorem}\label{onedimext}
Suppose that $F$ is a one-dimensional formal group law over an algebraically closed field $k$ (in particular $e=1$). Then the theory $F-\dcf^{\mathbf{b}}$ is an extension by definitions of the theory $\scf^{\mathbf{b}}_{p,1}$.
\end{theorem}
\begin{proof}
By \cite[Thm. 4.3]{HK2}, there is a formal group law $\widetilde{F}$ over $k$ such that $F\cong \widetilde{F}$ and the canonical $\widetilde{F}$-derivation restricts to $k[X]$. By Lemma \ref{isodef}, we can assume that $F=\widetilde{F}$. We can repeat now the proof of Theorem \ref{biin} (and we do not need to worry about the extra constant $c$).
\end{proof}
\begin{remark}
Passing from the base field $k$ to its algebraic closure expands the language with extra constants. But any existentially closed $F$-field is separably closed, so its field of absolute constants contains $k^{\alg}$. Hence such an expansion is not important for the theories we consider, since the models of $F-\dcf$ in the language with constants for elements of $k$ are the same as models of $F-\dcf$ in the language with constants for elements of $k^{\alg}$. Therefore, the assumption in Theorem \ref{onedimext} that $k$ is algebraically closed is harmless.
\end{remark}


\section{Geometric axioms}\label{secaxioms}
\noindent
In this section, we give geometric axioms for the theories $\mathfrak{g}-\dcf$ and $F-\dcf$.  The presentation follows the one in \cite{K3}, however (unlike in \cite{K3}) we notice here that the existence of canonical $p$-bases (see Section \ref{secgtup}) is \emph{not} necessary for the geometric axioms. We do \emph{not} assume in this section that $\mathfrak{g}$ is of the form $W[m]$ for a formal group law $W$.

\subsection{Prolongation and comultiplication}\label{secproco}
The notions introduced in this subsection are special cases of the notions considered in \cite{MS}. These notions originate from Buium \cite{Bui2} and also appeared (among others) in \cite{PiZie} and \cite{KP5}.
\\
\\
We fix a field $K$ with a $\mathfrak{g}$-derivation $\mathbb{D}$. Our first definitions do not use the $\mathfrak{g}$-iterativity condition.
\begin{itemize}
\item Let $\mathcal{D}$ be the functor from the category of $K$-algebras to the category of $K[\mathbf{v}_m]$-algebras defined in the following way:
$$\mathcal{D}(R)=R\otimes_{K,\mathbb{D}}K[\mathbf{v}_m].$$
Since $\mathcal{D}$ commutes with localizations, it also defines a functor from $K$-schemes to $K[\mathbf{v}_m]$-schemes.

\item The functor $\mathcal{D}$ considered as a functor from $K$-algebras to $K$-algebras has a left-adjoint functor $\nabla$ which extends to $K$-schemes. A crucial natural bijection is the following one:
    $$(\nabla V)(R)\longleftrightarrow V(\mathcal{D}(R)).$$

\item For any $K$-scheme $V$ we have a (non-algebraic!) map
$$\mathbb{D}_{V}:V(K)\to V(\mathcal{D}(K))=\nabla V(K)$$
induced by the $K$-algebra homomorphism $\mathbb{D}:K\to K[\mathbf{v}_m]=\mathcal{D}(K)$.
\end{itemize}
\begin{remark}
Our notation here differs from the notation used in \cite{Bui2} and \cite{MS}, where the left-adjoint functor considered above is denoted $\tau$, and the  notation $\nabla$ is used for $\mathbb{D}_{V}$.
\end{remark}

\noindent
The second set of definitions uses the $\mathfrak{g}$-iterativity condition.
\begin{itemize}

\item We have a natural transformation (of functors on the category of $K$-algebras) given by the Hopf algebra comultiplication (coming from $\mathfrak{g}$)
$$\mu:\mathcal{D}\to \mathcal{D}\circ \mathcal{D}.$$

\item We define a natural transformation of functors on the category of $K$-schemes
$$c:\nabla\to \nabla\circ \nabla$$
using the commutative diagram below
\begin{equation*}
\xymatrixcolsep{4pc}
\xymatrix{V(\mathcal{D}(R))  \ar[r]^{V(\mu)} &  V(\mathcal{D}(\mathcal{D}(R)))\\
  \nabla V(R) \ar[u]^{\cong} \ar[r]^{c_V} &  \nabla(\nabla V)(R) \ar[u]^{\cong}. }
\end{equation*}

\end{itemize}
\noindent
Below we give explicit descriptions of the maps $\mathbb{D}_V$ and $c_V$.
\\
For any positive integer $n$, our HS-derivation $\mathbb{D}$ naturally extends to the following HS-derivation
$$\left(D_{\mathbf{j}}:K[X_1,\ldots,X_n]\to K[X_1^{(\mathbf{i})},\ldots,X_n^{(\mathbf{i})}\ |\ \mathbf{i}\in[p^m]^e\ ]\right)_{\mathbf{j}\in[p^m]^e},\ \ \mathbb{D}_{\mathbf{j}}(X)=X^{(\mathbf{j})},$$
where $X_k^{(\mathbf{0})}=X_k$ and for $\mathbf{j}\neq \mathbf{0}$, $X_k^{(\mathbf{j})}$ is a new variable. We will use the following notation $$K\{X_1,\ldots,X_n\}:=K[X_1^{(\mathbf{i})},\ldots,X_n^{(\mathbf{i})}\ |\ \mathbf{i}\in[p^m]^e\ ].$$
If $R=K[X_1,\ldots,X_n]/I$, then $\nabla(R)=K\{X_1,\ldots,X_n\}/(\mathbb{D}(I))$. Hence, for an affine variety $V=\spec(R)$, the variety $\nabla(V)$ is defined by the ideal $(\mathbb{D}(I))$ and $\mathbb{D}_V$ is given in coordinates as the $n$-th Cartesian product of $\mathbb{D}$ (considered as a map from $K$ to $K^{p^{me}}$).
\\
Let $c_n$ denote $c_{\Aa^{np^{me}}}$. For every $(b_{\mathbf{i},1},\ldots,b_{\mathbf{i},n})_{\mathbf{i}\in [p^m]^e}\in K^{np^{me}}$ we have
$$c_n\left((b_{\mathbf{i},1},\ldots,b_{\mathbf{i},n})_{\mathbf{i}\in [p^m]^e}\right)=
\left(\sum\limits_{\mathbf{k}\in[p^m]^e}c_{\mathbf{i},\mathbf{j}}^{\mathbf{k}}b_{\mathbf{k},1},\ldots,
\sum\limits_{\mathbf{k}\in[p^m]^e}c_{\mathbf{i},\mathbf{j}}^{\mathbf{k}}b_{\mathbf{k},n}\right)_{\mathbf{i},\mathbf{j}\in [p^m]^e},$$
where $c_{\mathbf{i},\mathbf{j}}^{\mathbf{k}}$ are the ``structural constants'' from Remark \ref{gderint2}.
We will need the following.
\begin{lemma}\label{nablaL}
Let $V$ be a $K$-scheme and suppose that $(K,\mathbb{D})\subseteq(L,\mathbb{D}')$ is an extension of $m$-truncated $e$-dimensional HS-fields. Then for any $a\in V(L)$, we have
 $\mathbb{D}'_V(a)\in\nabla V(L)$.
\end{lemma}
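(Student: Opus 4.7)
The plan is to reduce to the affine case and then verify the assertion by direct inspection in coordinates. Since $\mathcal{D}$ commutes with localizations, so does the prolongation functor $\nabla$, and hence the map $\mathbb{D}'_{(-)}$ is compatible with open immersions. Therefore it suffices to check $\mathbb{D}'_V(a)\in\nabla(V)(L)$ on any affine open of $V$ containing the point corresponding to $a$. So I may assume $V=\spec(R)$ with $R=K[X_1,\ldots,X_n]/I$; then $a=(a_1,\ldots,a_n)\in L^n$ satisfies $f(a)=0$ for every $f\in I$, and, as recalled in the excerpt just above the lemma, the prolongation $\nabla V\subseteq\Aa^{np^{me}}$ is cut out by the polynomials $\mathbb{D}_{\mathbf{j}}(f)\in K[X_k^{(\mathbf{i})}\mid \mathbf{i}\in[p^m]^e,\,k\leqslant n]$ for $f\in I$ and $\mathbf{j}\in[p^m]^e$, while $\mathbb{D}'_V(a)$ is the tuple $(D'_{\mathbf{i}}(a_k))_{\mathbf{i},k}\in L^{np^{me}}$.

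The task is thus to verify that $\mathbb{D}_{\mathbf{j}}(f)$ vanishes on the tuple $(D'_{\mathbf{i}}(a_k))_{\mathbf{i},k}$ for every $f\in I$ and $\mathbf{j}$. I repackage all these scalar identities into one identity living in $L[\mathbf{v}_m]$:
$$
\sum_{\mathbf{j}\in[p^m]^e}\mathbb{D}_{\mathbf{j}}(f)\bigl(D'_{\mathbf{i}}(a_k)\bigr)_{\mathbf{i},k}\;\mathbf{v}_m^{\mathbf{j}}\;=\;\mathbb{D}'\bigl(f(a_1,\ldots,a_n)\bigr).
$$
Since $f$ has coefficients in $K$ and $\mathbb{D}'$ restricted to $K$ equals $\mathbb{D}$, the extensions of $\mathbb{D}$ and of $\mathbb{D}'$ to polynomial rings over $K$ and $L$ respectively (both sending $X_k$ to the formal sum $\sum_{\mathbf{i}}X_k^{(\mathbf{i})}\mathbf{v}_m^{\mathbf{i}}$) agree on $f$; so substituting $X_k^{(\mathbf{i})}=D'_{\mathbf{i}}(a_k)$, which amounts to substituting $X_k\mapsto\mathbb{D}'(a_k)$, computes $f(\mathbb{D}'(a_1),\ldots,\mathbb{D}'(a_n))$, and this equals $\mathbb{D}'(f(a_1,\ldots,a_n))$ because $\mathbb{D}'$ is a ring homomorphism. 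But $f(a_1,\ldots,a_n)=0$, so the right-hand side is $0$, and extracting the coefficient of each $\mathbf{v}_m^{\mathbf{j}}$ (which is a $K$-basis element of $K[\mathbf{v}_m]$, hence an $L$-basis element of $L[\mathbf{v}_m]$) gives the required vanishing.

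There is no real obstacle in this argument --- iterativity is not used at all, only that $\mathbb{D}'$ is a $k$-algebra section of the projection and extends $\mathbb{D}$ on $K$. The only bookkeeping point worth stating carefully is the equality $\mathbb{D}(f)=\mathbb{D}'(f)$ in $L\{X_1,\ldots,X_n\}$, which I have isolated above. So the whole proof is essentially one line once the identification of $\nabla V$ with the zero locus of $\mathbb{D}(I)$ is in hand.
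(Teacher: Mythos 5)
Your proof is correct and is essentially the coordinate-level unpacking of the paper's own one-line argument, which simply observes that since $\mathbb{D}'$ extends $\mathbb{D}$, the $k$-algebra map $\mathbb{D}':L\to \mathcal{D}(L)$ is $K$-linear, so that composing an $L$-point of $V$ with it lands in $V(\mathcal{D}(L))=\nabla V(L)$. Both arguments hinge on exactly the same fact (that $\mathbb{D}'|_K=\mathbb{D}$, with iterativity playing no role), so there is nothing to add beyond noting that your explicit verification via the identity $\sum_{\mathbf{j}}\mathbb{D}_{\mathbf{j}}(f)\bigl(D'_{\mathbf{i}}(a_k)\bigr)\mathbf{v}_m^{\mathbf{j}}=\mathbb{D}'(f(a))$ is a faithful expansion of that remark.
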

\begin{proof}
It is enough to notice that if $\mathbb{D}'$ extends $\mathbb{D}$, then the $k$-algebra map $\mathbb{D}':L\to \mathcal{D}(L)$ is $K$-linear.
\end{proof}
\noindent
The following lemma corresponds to \cite[Lemma 1.1(ii)]{K3} and is a direct consequence of the $\mathfrak{g}$-iterativity condition.
\begin{lemma}\label{gitnabla}
For any $K$-scheme $V$ we have the following:
$$\mathbb{D}_{\nabla(V)}\circ \mathbb{D}_V=c_V\circ \mathbb{D}_V.$$
\end{lemma}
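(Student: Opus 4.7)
The plan is to unravel the definitions so that each side of the desired equality is expressed as $V$ applied to a suitable ring map $K\to\mathcal{D}(\mathcal{D}(K))$, and then to recognize the resulting equation as the $\mathfrak{g}$-iterativity diagram from Remark \ref{gderint}.

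First I would rewrite $\mathbb{D}_V$ via the natural identification $\nabla V(K)\cong V(\mathcal{D}(K))$. Under this identification, $\mathbb{D}_V$ is simply $V(\mathbb{D})$, where $\mathbb{D}:K\to\mathcal{D}(K)=K[\mathbf{v}_m]$ is the $\mathfrak{g}$-derivation itself, viewed now as a $K$-algebra homomorphism. Next I would describe both sides of the equality inside $\nabla(\nabla V)(K)\cong V(\mathcal{D}(\mathcal{D}(K)))$. On the left, $\mathbb{D}_{\nabla V}$ is $(\nabla V)(\mathbb{D})$, which under the identification becomes $V(\mathcal{D}(\mathbb{D}))$, so that $\mathbb{D}_{\nabla V}\circ \mathbb{D}_V$ corresponds to $V\bigl(\mathcal{D}(\mathbb{D})\circ\mathbb{D}\bigr)$. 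On the right, $c_V$ is defined precisely to correspond, under the identification, to $V(\mu_K)$, so that $c_V\circ\mathbb{D}_V$ corresponds to $V\bigl(\mu_K\circ\mathbb{D}\bigr)$.

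Since $V$ is a functor, it suffices to verify the ring-level equality
\[
\mathcal{D}(\mathbb{D})\circ\mathbb{D} \;=\; \mu_K\circ\mathbb{D}
\]
as $K$-algebra maps $K\to \mathcal{D}(\mathcal{D}(K))$. Now $\mathcal{D}(\mathcal{D}(K))$ is naturally identified with the ring $K[\mathbf{w}_m,\mathbf{v}_m]$ from Remark \ref{gderint}, the natural transformation $\mu_K$ is by construction the Hopf-algebra comultiplication $c:K[\mathbf{w}_m]\to K[\mathbf{w}_m,\mathbf{v}_m]$ coming from $\mathfrak{g}$, and $\mathcal{D}(\mathbb{D})=\mathbb{D}\otimes_K\id_{K[\mathbf{v}_m]}$ is exactly the map denoted $\mathbb{D}[\mathbf{v}_m]$ in that remark. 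Hence the equality we need is precisely the commutativity of the $\mathfrak{g}$-iterativity diagram for $\mathbb{D}$, which holds by assumption.

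The only substantive work is bookkeeping the two copies of $\mathcal{D}$ and keeping straight which variables ($\mathbf{v}_m$ versus $\mathbf{w}_m$) come from which occurrence; once this is set up cleanly, the lemma is a formal consequence of the fact that $\mathfrak{g}$-iterativity is nothing other than the assertion that $\mathbb{D}$ is a coaction for the comultiplication $\mu$. This is where the mild difficulty lies; there is no obstacle beyond that, so there is no need to invoke any further geometric input (such as the explicit coordinate formulas for $c_n$ recorded just before the lemma).
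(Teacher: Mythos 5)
Your argument is correct and matches the paper's (unwritten) proof: the paper simply asserts that the lemma ``is a direct consequence of the $\mathfrak{g}$-iterativity condition,'' and your reduction of both sides to $V(\mathcal{D}(\mathbb{D})\circ\mathbb{D})$ and $V(\mu_K\circ\mathbb{D})$, followed by the identification of $\mathcal{D}(\mathbb{D})\circ\mathbb{D}=\mu_K\circ\mathbb{D}$ with the $\mathfrak{g}$-iterativity diagram of Remark \ref{gderint}, is exactly the intended unpacking of that assertion. No further input is needed.
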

\noindent
Let us fix a $|K|^+$-saturated algebraically closed field $\Omega$ containing $K$. We want to describe possible extensions of $\mathbb{D}$ to subfields of $\Omega$ in terms of the functor $\nabla$. Let $b_0\in\Omega^n$, $b=(b_0,\ldots,b_{p^{me}-1})\in\Omega^{np^{me}}$ and we set
$$V=\locus_K(b_0),\ \ \ W=\locus_K(b).$$
As in \cite[Lemma 3.3]{K3}, we can show the following.
\begin{lemma}\label{nablaext}
If $b\in \nabla V(\Omega)$ and $K(b_0)=K(b)$, then there is an $m$-truncated $e$-dimensional HS-derivation $\mathbb{D}'$ on $K(b)$ extending $\mathbb{D}$ such that $\mathbb{D}'_V(b_0)=b$.
\end{lemma}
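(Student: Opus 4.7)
The plan is to unfold the defining bijection $\nabla V(\Omega)\cong V(\mathcal{D}(\Omega))$. The point $b\in\nabla V(\Omega)$ corresponds to a $K$-algebra homomorphism
$$\phi:\mathcal{O}(V)\longrightarrow \mathcal{D}(\Omega)=\Omega[\mathbf{v}_m],$$
where the target carries its $K$-algebra structure via $\mathbb{D}:K\to K[\mathbf{v}_m]\hookrightarrow\Omega[\mathbf{v}_m]$. Writing $\mathcal{O}(V)=K[X_1,\ldots,X_n]/I$, in coordinates one has $\phi(X_k)=\sum_{\mathbf{i}\in[p^m]^e} b_{\mathbf{i},k}\,\mathbf{v}_m^{\mathbf{i}}$, so the image of $\phi$ lies in the subring $K(b)[\mathbf{v}_m]\subseteq\Omega[\mathbf{v}_m]$. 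Factoring $\phi$ through the surjection $\mathcal{O}(V)\twoheadrightarrow K[b_0]$ therefore yields a $k$-algebra map $\psi:K[b_0]\to K(b)[\mathbf{v}_m]$ extending $\mathbb{D}$ on $K$ and sending the tuple $b_0$ to $\bigl(\sum_{\mathbf{i}}b_{\mathbf{i},k}\mathbf{v}_m^{\mathbf{i}}\bigr)_{k\leqslant n}$. Composing $\psi$ with the augmentation $K(b)[\mathbf{v}_m]\to K(b)$ recovers the inclusion $K[b_0]\hookrightarrow K(b)$, so $\psi$ is a section of the projection in the sense required by Definition \ref{hstrdef}.

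Next I would extend $\psi$ to the fraction field $K(b_0)$. Since $K(b)[\mathbf{v}_m]$ is a local $K(b)$-algebra with nilpotent maximal ideal $(\mathbf{v}_m)$ and residue field $K(b)$, an element $\psi(f)$ is a unit in $K(b)[\mathbf{v}_m]$ if and only if its residue $f\in K(b)$ is nonzero; hence every nonzero element of $K[b_0]$ is inverted by $\psi$, and $\psi$ extends uniquely to a $k$-algebra map $\mathbb{D}':K(b_0)\to K(b)[\mathbf{v}_m]$. Alternatively, this extension is a direct consequence of Proposition \ref{etalehs} applied to the \'etale inclusion $K[b_0]\hookrightarrow K(b_0)$ (see Remark \ref{defgen}(1)). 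The hypothesis $K(b_0)=K(b)$ is now precisely what promotes $\mathbb{D}'$ to a map $K(b)\to K(b)[\mathbf{v}_m]$, and as $\mathbb{D}'$ remains a section of the projection, it is by definition an $m$-truncated $e$-dimensional HS-derivation on $K(b)$. Reading off the coefficients of $\mathbf{v}_m^{\mathbf{i}}$ in the coordinates of $\mathbb{D}'(b_0)$ returns exactly the components of $b$.

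The main (and in fact only) delicate point is verifying that $\psi$ extends from the finitely generated $K$-algebra $K[b_0]$ to its fraction field, which reduces to the local-ring argument above. The rest of the proof is a tautological unwinding of the adjunction $\nabla\dashv\mathcal{D}$: the prolongation functor is defined precisely so that $\nabla V(\Omega)$ parametrises the $K$-algebra maps $\mathcal{O}(V)\to\mathcal{D}(\Omega)$ extending $\mathbb{D}$. The hypothesis $K(b_0)=K(b)$ cannot be dispensed with, since without it the coefficients $b_{\mathbf{i}}$ for $\mathbf{i}\neq\mathbf{0}$ need not lie in $K(b_0)$, and $\mathbb{D}'$ would only map $K(b_0)$ into a strictly larger truncated polynomial ring.
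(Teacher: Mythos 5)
Your proof is correct, and it follows the same route the paper takes (the paper simply defers to \cite[Lemma 3.3]{K3}, whose argument is exactly this): unwind the adjunction to get a $K$-algebra map $K[b_0]\to K(b)[\mathbf{v}_m]$ lifting $\mathbb{D}$, then localize using that $K(b)[\mathbf{v}_m]$ is local with nilpotent maximal ideal, with $K(b_0)=K(b)$ ensuring the result is an HS-derivation on a single field. The verification that nonzero elements of $K[b_0]$ map to units, which you single out as the only delicate step, is indeed the heart of the matter and is handled correctly.
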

\noindent
Finally, we obtain the following lemma by using Lemmas \ref{nablaL}, \ref{gitnabla}, \ref{nablaext} (as in \cite{K3}).
\begin{lemma}\label{cVW}
The following are equivalent.
\begin{enumerate}
\item  There is a $\mathfrak{g}$-field extension $(K,\mathbb{D})\subseteq(K(b),\mathbb{D}')$ such that $\mathbb{D}'_V(b_0)=b$.

\item  There is a $\mathfrak{g}$-field extension $(K,\mathbb{D})\subseteq(L,\mathbb{D}')$ such that $\mathbb{D}'_V(b_0)=b$.

\item $c_{n}(W)\subseteq \nabla(W)$.
\end{enumerate}
\end{lemma}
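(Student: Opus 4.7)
I would prove the cycle $(1)\Rightarrow(2)\Rightarrow(3)\Rightarrow(1)$, following the pattern of \cite{K3}. The implication $(1)\Rightarrow(2)$ is immediate by taking $L=K(b)$.

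For $(2)\Rightarrow(3)$, evaluating the identity of Lemma \ref{gitnabla} at the point $b_0\in V(L)$, together with the hypothesis $\mathbb{D}'_V(b_0)=b$, yields
\[
\mathbb{D}'_{\nabla V}(b)\;=\;\mathbb{D}'_{\nabla V}\bigl(\mathbb{D}'_V(b_0)\bigr)\;=\;c_V\bigl(\mathbb{D}'_V(b_0)\bigr)\;=\;c_n(b).
\]
Since $b\in W(L)\subseteq\nabla V(L)$ and $\mathbb{D}'$ is applied coordinate-wise, the left-hand side coincides with $\mathbb{D}'_W(b)$, which lies in $\nabla W(L)$ by Lemma \ref{nablaL} applied to the extension $(K,\mathbb{D})\subseteq(L,\mathbb{D}')$ and the variety $W$. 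Hence $c_n(b)\in\nabla W(\Omega)$. Because $c_n$ is a morphism over $K$ and $W=\locus_K(b)$, the image $c_n(W)$ is the $K$-Zariski closure of $\{c_n(b)\}$ and therefore lies inside $\nabla W$.

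For $(3)\Rightarrow(1)$ I would first note that the containment $c_n(W)\subseteq\nabla W\subseteq\nabla(\nabla V)$, read off at the ``$\mathbf{j}=\mathbf{0}$'' slot, forces $b\in\nabla V(\Omega)$; reading off the other slots expresses each coordinate $b_{\mathbf{i}}$ as a $K$-polynomial in those of $b_0$, so $K(b_0)=K(b)$. Lemma \ref{nablaext} then supplies an $m$-truncated $e$-dimensional HS-derivation $\mathbb{D}'$ on $K(b)$ extending $\mathbb{D}$ with $\mathbb{D}'_V(b_0)=b$. What remains, and is the main obstacle, is to verify that $\mathbb{D}'$ satisfies $\mathfrak{g}$-iterativity. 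Since $K(b)$ is generated over $K$ by $b_0$ and the iterativity identities of Remark \ref{gderint2} propagate through the $k$-algebra structure, it suffices to check the single equality
\[
\mathbb{D}'_{\nabla V}(b)\;=\;c_n(b)
\]
in $\nabla(\nabla V)(K(b))$. Both sides lie in $\nabla W$: the right-hand side by hypothesis (3), and the left-hand side by Lemma \ref{nablaL} applied to $W$. As in \cite{K3}, their actual coincidence is extracted from the uniqueness inherent in Lemma \ref{nablaext}: the two $(2m)$-truncated HS-derivations $\mathbb{D}'[\mathbf{v}_m]\circ\mathbb{D}'$ and $c\circ\mathbb{D}'$ on $K(b)$ (compare the diagram in Remark \ref{gderint}) both extend the corresponding HS-derivation induced on $K$ by the $\mathfrak{g}$-iterativity of $\mathbb{D}$ and, thanks to (3), they send the generator $b_0$ to the same point of $\nabla(\nabla V)(K(b))$; uniqueness then forces them to agree on all of $K(b)$. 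This is precisely the iterativity identity required, completing the verification of (1).
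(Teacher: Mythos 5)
Your implications $(1)\Rightarrow(2)$ and $(2)\Rightarrow(3)$ are fine (modulo the harmless indexing slip that the block of $c_n(b)$ recovering $b$ is the one with trivial \emph{comultiplication} index, and that $c_n(W)$ need only be \emph{contained in} the $K$-closure of $c_n(b)$, which is all you use). The genuine gap is in $(3)\Rightarrow(1)$: your claim that condition $(3)$ forces $K(b_0)=K(b)$ is false. Take $\mathbb{D}$ trivial on $K$ and $b$ a tuple of elements algebraically independent over $K$ whose first block is $b_0$; then $W=\mathbb{A}^{np^{me}}$, $\nabla W=\mathbb{A}^{np^{2me}}$, so $(3)$ holds vacuously, yet $K(b)\supsetneq K(b_0)$. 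The entries of $c_n(b)$ are $k$-linear combinations of the $b_{\mathbf{k}}$, so membership of $c_n(b)$ in $\nabla W$ gives no expression of the $b_{\mathbf{i}}$ in terms of $b_0$. This matters: in the proof of Theorem \ref{ggeo} the lemma is invoked for a generic point $b$ of a $W$ that merely projects \emph{generically} (not birationally) onto $V$, so the case $K(b)\neq K(b_0)$ cannot be excluded. With $K(b_0)=K(b)$ gone, both your appeal to Lemma \ref{nablaext} (whose hypothesis this is) and your closing uniqueness argument (two $k$-algebra maps on $K(b)$ are only forced to agree once they agree on $K$ and on \emph{all} of $b$, not just on $b_0$) collapse.

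The repair is to shift the roles: apply Lemma \ref{nablaext} to the pair $(b,\,c_n(b))$ rather than to $(b_0,b)$. Indeed $\locus_K(b)=W$, condition $(3)$ gives $c_n(b)\in\nabla W(\Omega)$, and $K(c_n(b))=K(b)$ automatically, since $c_n$ is $K$-linear and its block with trivial comultiplication index returns $b$ itself. Lemma \ref{nablaext} then produces an $m$-truncated HS-derivation $\mathbb{D}'$ on $K(b)$ with $\mathbb{D}'(b)=c_n(b)$, i.e.\ $D'_{\mathbf{j}}(b_{\mathbf{i}})=\sum_{\mathbf{k}}c_{\mathbf{i},\mathbf{j}}^{\mathbf{k}}b_{\mathbf{k}}$; the case $\mathbf{i}=\mathbf{0}$ gives $\mathbb{D}'_V(b_0)=b$ (and $b\in\nabla V(\Omega)$ follows, as you essentially observed, because $I_V\subseteq I_W$). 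The $\mathfrak{g}$-iterativity of $\mathbb{D}'$ is then checked honestly rather than by the circular ``both sides lie in $\nabla W$'' step: the two $k$-algebra maps $\mathbb{D}'[\mathbf{v}_m]\circ\mathbb{D}'$ and $c\circ\mathbb{D}'$ agree on $K$ by the iterativity of $\mathbb{D}$, and they agree on each generator $b_{\mathbf{i}}$ by the coassociativity identity $\sum_{\mathbf{k}}c_{\mathbf{l},\mathbf{i}}^{\mathbf{k}}c_{\mathbf{k},\mathbf{j}}^{\mathbf{k}'}=\sum_{\mathbf{k}}c_{\mathbf{i},\mathbf{j}}^{\mathbf{k}}c_{\mathbf{l},\mathbf{k}}^{\mathbf{k}'}$ of the Hopf algebra comultiplication; since $b$ generates $K(b)$ over $K$, they coincide. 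The paper itself only cites Lemmas \ref{nablaL}, \ref{gitnabla}, \ref{nablaext} and \cite{K3} for this, but the intended use of Lemma \ref{nablaext} is the one just described, not the one you made.
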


\subsection{Geometric axioms}\label{secgag}
 In this subsection, we give geometric axioms for the theories $\gdcf$ and $\fdcf$. In the case of $F=\widehat{\ga^e}$, we will recover (and actually we will also correct, thanks to referee's comment, by adding the assumption that $K$ is separably closed) the geometric axioms for $\sch_{p,e}$ from \cite[Theorem 4.3]{K3}. First we deal with the truncated case.
\bigskip
\newline
{\bf Geometric axioms for $\gdcf$}
\begin{enumerate}
\item The field $K$ is separably closed.

\item For each positive integer $n$, suppose that $V\subseteq {\mathbb A}^n$ and $W\subseteq \nabla(V)$ are
absolutely irreducible $K$-varieties, and $Z$ is a proper $K$-subvariety
of $W$. If $W$ projects generically onto $V$, and $c_{V}(W)\subseteq
\nabla (W)$, then there is $a\in V(K)$ such that $\mathbb{D}_{V}(a)\in W(K)\setminus Z(K)$.
\end{enumerate}
\noindent
These axioms are first-order, since for a separably closed field $K$, any $K$-irreducible variety is absolutely irreducible.
\begin{theorem}\label{ggeo}
The $\mathfrak{g}$-field $(K,\mathbb{D})$ is an existentially
closed $\mathfrak{g}$-field if and only if $(K,\mathbb{D})$ satisfies the geometric axioms above.
\end{theorem}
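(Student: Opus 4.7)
The plan is to prove both directions directly using Lemma \ref{cVW}, following the lines of \cite[Theorem 4.3]{K3}.

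For the $(\Rightarrow)$ direction, I would assume that $(K,\mathbb{D})$ is existentially closed and take any $V, W, Z$ satisfying the hypotheses of the geometric axioms. Choose $b = (b_0, b_1, \ldots, b_{p^{me}-1})$ to be a generic point of $W$ over $K$ inside a $|K|^+$-saturated algebraically closed field $\Omega \supseteq K$. Since $W$ projects generically onto $V$, the component $b_0$ is a generic point of $V$. By Lemma \ref{cVW}, the assumption $c_V(W) \subseteq \nabla(W)$ (condition (3)) yields a $\mathfrak{g}$-field extension $(K,\mathbb{D}) \subseteq (K(b), \mathbb{D}')$ with $\mathbb{D}'_V(b_0) = b$ (condition (1)). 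Since $Z$ is a proper $K$-subvariety of $W$ and $b$ is $W$-generic, $b \notin Z$. The conditions $b_0 \in V$, $\mathbb{D}'_V(b_0) \in W$ and $\mathbb{D}'_V(b_0) \notin Z$ together form a quantifier-free $L_{e,m}$-formula over $K$ realized in the extension, so by existential closedness there is $a \in V(K)$ with $\mathbb{D}_V(a) \in W(K) \setminus Z(K)$.

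For the $(\Leftarrow)$ direction, I would assume $(K, \mathbb{D})$ satisfies the geometric axioms and show that any quantifier-free $L_{e,m}$-formula over $K$ realized in a $\mathfrak{g}$-extension $(L, \mathbb{D}')$ is realized in $K$. Since atomic formulas in $L_{e,m}$ have the shape $f(D_{\mathbf{i}_1}(\bar{x}), \ldots, D_{\mathbf{i}_s}(\bar{x})) = 0$ for polynomials $f$, they translate to polynomial conditions on the tuple $\mathbb{D}'_{\Aa^n}(\bar{x}) \in \Aa^{np^{me}}$. After this translation I reduce to a single conjunction of the form ``$f_j(\mathbb{D}'_{\Aa^n}(\bar{x})) = 0$ for all $j$ and $g_k(\mathbb{D}'_{\Aa^n}(\bar{x})) \neq 0$ for all $k$''. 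Pick a realization $\bar{c} \in L^n$ and set $b_0 := \bar{c}$, $b := \mathbb{D}'_{\Aa^n}(\bar{c})$, $V := \locus_K(b_0)$, $W := \locus_K(b)$, and $Z := W \cap \bigcup_k \{g_k = 0\}$. By Lemma \ref{nablaL}, $b \in \nabla(V)(L)$, so $W \subseteq \nabla(V)$; the projection $W \to V$ is dominant because $b_0$ is the image of $b$; and by Lemma \ref{cVW} (implication (2) $\Rightarrow$ (3) applied to $(L,\mathbb{D}')$), $c_V(W) \subseteq \nabla(W)$. Since $b \notin Z$, the subvariety $Z$ is proper in $W$. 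The geometric axiom applied to $V, W, Z$ yields $a \in V(K)$ with $\mathbb{D}_V(a) \in W(K) \setminus Z(K)$, and such an $a$ satisfies the original formula.

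The main technical subtlety is the correct packaging of $V$, $W$, $Z$ for an arbitrary quantifier-free formula in the $(\Leftarrow)$ direction; once that is arranged, Lemma \ref{cVW} provides both the existence of the $\mathfrak{g}$-extension in $(\Rightarrow)$ and the verification of $c_V(W) \subseteq \nabla(W)$ in $(\Leftarrow)$, and the geometric axiom closes the argument as in \cite{K3}.
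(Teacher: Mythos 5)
Your proposal is correct and follows essentially the same route as the paper: both directions go through Lemma \ref{cVW} with $b$ a generic point of $W$, and your packaging of the quantifier-free formula in the $(\Leftarrow)$ direction (DNF reduction, with $Z$ the union of the zero sets of the $g_k$ inside $W$) is just a more explicit version of the paper's choice of $Z$ as the Zariski closure of $\{d\in W\;:\;\neg\varphi(d)\}$. No gaps.
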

\begin{proof}
 Assume that $(K,\mathbb{D})$ is an existentially closed $\mathfrak{g}$-field. Then by Proposition \ref{etalegext}, $K$ is separably closed (since a separable algebraic field extension is \'{e}tale). Take any $K$-irreducible $K$-varieties $V\subseteq \mathbb{A}^n$, $W\subseteq \nabla(V)$ and
 $Z\subsetneq W$. If $W$ projects generically onto $V$ then there is $b_0\in\Omega^n$ and
 $b=(b_0,\ldots,b_{p^{me}-1})\in\Omega^{np^{me}}$ such that $V=\locus_K(b_0)$ and $W=\locus_K(b)$. Since $c_V(W)\subseteq \nabla(W)$,
 Lemma \ref{cVW} implies that there exists a $\mathfrak{g}$-field extension $(K,\mathbb{D})\subseteq (K(b),\mathbb{D}')$
 such that $\mathbb{D}'_V(b_0)=b$. We have
 $$\mathbb{D}'_V(b_0)=b\in W(K(b))\setminus Z(K(b))$$
 and $b_0\in V(K(b))$. Since
 $(K,\mathbb{D})$ is existentially closed, there is $b_0'\in V(K)$ such that
 $\mathbb{D}_V(b'_0)\in W(K)\setminus Z(K)$.
 \\
 \\
Assume now that $(K,\mathbb{D})$ is a model of the ``geometric axioms for $\gdcf$'' and let $(K,\mathbb{D})\subseteq (L,\mathbb{D}')$ be an extension of $\mathfrak{g}$-fields. Take any quantifier-free formula $\varphi(x_0,\ldots,x_{p^{me}-1})$ over $K$ (in the language of fields), where each $x_i$ is an $n$-tuple of variables for an arbitrary (but fixed) positive integer $n$. Suppose that $(L,\mathbb{D}')\models (\exists x_0)\varphi(\mathbb{D}'(x_0))$. Take $b_0\in L^n$ such that $(L,\mathbb{D}')\models\varphi(\mathbb{D}'(b_0))$
 and set
 $$V=\locus_K(b_0),\ \ b=\mathbb{D}_V'(b_0),\ \ W=\locus_K(b),\ \ Z_0=\lbrace d\in W\;\;|\;\; \neg\varphi(d)\rbrace.$$
Let $Z$ denote the intersection of all $K$-subvarieties of $W$ containing $Z_0$. Clearly $b\in W(L)\setminus Z(L)$ and
the second condition of Lemma \ref{cVW} is satisfied, so $c_{V}(W)\subseteq \nabla(W)$. Hence all the assumptions of the ``geometric axioms for $\gdcf$'' hold.
 Therefore there is $b_0'\in V(K)$ such that $\mathbb{D}(b_0')\in W(K)\setminus Z(K)$ and $(K,\mathbb{D})\models (\exists x_0)\varphi(\mathbb{D}(x_0))$.
\end{proof}
\begin{remark}\label{gremark}
Note that the results of Section \ref{secmc} are not used in the proof of Theorem \ref{ggeo} which suggests a possibility of generalizations, e.g. to the context of $\mathcal{D}$-fields from \cite{MS}.
\end{remark}
\noindent
We turn now to the case of $F$-derivations and assume that $(K,\mathbb{D})$ is an $F$-field. By Theorem \ref{chain}, the geometric axioms for $\fdcf$ are given as the union (over $m$) of the geometric axioms for $F[m]-\dcf$. We state these axioms explicitly below, where $\nabla_m$ denotes the functor $\nabla$ with respect to $\mathbb{D}[m]$ (similarly for $c_m$).
\\
\\
{\bf Geometric axioms for $\fdcf$}
\begin{enumerate}
\item The field $K$ is separably closed.

\item For any positive integers $n,m$, suppose that $V\subseteq {\mathbb A}^n$ and $W\subseteq \nabla_m(V)$ are
absolutely irreducible $K$-varieties, and $Z$ is a proper $K$-subvariety
of $W$. If $W$ projects generically onto $V$, and $c_{m,V}(W)\subseteq
\nabla_m(W)$, then there is $a\in V(K)$ such that $\mathbb{D}[m]_{V}(a)\in W(K)\setminus Z(K)$.
\end{enumerate}
\noindent
We get a result generalizing \cite[Theorem 4.3]{K3}.
\begin{theorem}\label{fgeo}
The $F$-field $\mathbf{K}$ is an existentially
closed $F$-field if and only if $\mathbf{K}$ is a model of the geometric axioms for $\gdcf$.
\end{theorem}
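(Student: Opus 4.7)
The plan is to reduce Theorem \ref{fgeo} to the truncated case already handled in Theorem \ref{ggeo}, using the filtration result Theorem \ref{chain}. By the very definition given in the paragraph preceding the statement, the geometric axioms for $\fdcf$ are the union, over all $m$, of the geometric axioms for $F[m]-\dcf$. Hence an $F$-field $\mathbf{K}=(K,\mathbb{D})$ satisfies the geometric axioms for $\fdcf$ if and only if, for every positive integer $m$, the truncated $F[m]$-field $(K,\mathbb{D}[m])$ satisfies the geometric axioms for $F[m]-\dcf$ in the sense of Section \ref{secgag}.

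On the model-theoretic side, Theorem \ref{chain} says $F-\dcf=\bigcup_m F[m]-\dcf$, and the theorem of Section \ref{sececf} says that $\fdcf$ is the model companion of $\fdf$ (and by Theorem \ref{qegdcf} each $F[m]-\dcf$ is the model companion of $F[m]-\ddf$). Therefore the following chain of equivalences holds: $\mathbf{K}$ is an existentially closed $F$-field $\iff$ $\mathbf{K}\models \fdcf$ $\iff$ for every $m$, $(K,\mathbb{D}[m])\models F[m]-\dcf$ $\iff$ for every $m$, $(K,\mathbb{D}[m])$ is an existentially closed $F[m]$-field. Applying Theorem \ref{ggeo} with $\mathfrak{g}=F[m]$, the last condition is equivalent to $(K,\mathbb{D}[m])$ satisfying the geometric axioms for $F[m]-\dcf$ for every $m$, which, by the first paragraph, is the geometric axiomatization of $\fdcf$.

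The step that would be awkward if attempted directly is showing that existential closedness among $F$-fields coincides with existential closedness among $F[m]$-fields for every $m$ separately; a naive mimicking of the proof of Theorem \ref{ggeo} would require turning the $F[m]$-field extension produced by Lemma \ref{cVW} into a full $F$-field extension, and it is not obvious how to specify the higher coefficients $D_{\mathbf{i}}$ for $|\mathbf{i}|\geq p^m$ compatibly with $F$-iterativity. Going through the model companion as above short-circuits this point: the characterization of models of $\fdcf$ in Section \ref{sececf} is tautologically equivalent to existential closedness among $F$-fields, and similarly for each truncation, so the matching of the two ec notions is built into Theorem \ref{chain}. Consequently no genuinely new obstruction appears, and the argument reduces to a clean bookkeeping between Theorems \ref{ggeo} and \ref{chain}.
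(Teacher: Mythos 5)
Your proof is correct and is essentially the argument the paper intends: the paper defines the geometric axioms for $F-\dcf$ as the union over $m$ of those for $F[m]-\dcf$, and Remark \ref{fremark} confirms that the proof combines Theorem \ref{ggeo} with Theorem \ref{chain} via the model-companion results of Section \ref{secmc}, exactly as in your chain of equivalences. Your closing observation about why a direct imitation of the proof of Theorem \ref{ggeo} would be problematic (extending $\mathbb{D}'[m]$ to a full $F$-derivation) correctly identifies the point that the detour through $F-\dcf$ and Theorem \ref{chain} is designed to avoid.
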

\begin{remark}\label{fremark}
Unlike in the proof of Theorem \ref{ggeo}, the results of Section \ref{secmc} \emph{are} used for the proof of Theorem \ref{fgeo}, since Theorem \ref{chain} is necessary for the geometric axiomatization and the proof of Theorem \ref{chain} requires the algebraic axiomatizations of $\gdcf$ and $\fdcf$. However, one can also prove Theorem \ref{chain} (but for a limited class of formal groups $F$ only) in another fashion as it was done in \cite{K3} for $F=\widehat{\ga^e}$. This approach will be discussed in Section \ref{secgtup}.
\end{remark}

\section{Fields with operators and canonical $G$-tuples}\label{msecop}
\noindent
As we have mentioned several times, $F$-iterative fields fit to the more general set-up of \emph{iterative $\underline{\mathcal{D}}$-fields}, see \cite{MS}. Model companions of the theories of iterative $\underline{\mathcal{D}}$-fields are analyzed in \cite{MS2}, however only the case of characteristic $0$ and only the ``trivial'' iterativity maps (see \cite[Section 6.1]{MS2}) are considered there. One could wonder whether our techniques may be generalized to include the case of iterative $\underline{\mathcal{D}}$-fields of positive characteristic. It seems likely in the case where an iterative system $(\underline{\mathcal{D}},\Delta)$ is the inverse limit of finite iterative systems $(\mathcal{D}_m,\Delta_m)_m$ resembling the ones given by Hopf algebra comultiplications $\mu_m:\mathcal{D}_m\to \mathcal{D}_m\circ \mathcal{D}_m$ (see Section \ref{secproco}). More precisely, our techniques may apply to the iterative systems $(\underline{\mathcal{D}},\Delta)$ where each morphism $\Delta_{p^m}:\mathcal{D}_{2p^m}\to \mathcal{D}_{p^m}\circ \mathcal{D}_{p^m}$ factors through the projection morphism $\mathcal{D}_{2p^m}\to \mathcal{D}_{p^m}$. It is easy to see that each direct system of finite group schemes provides such an iterative system, and, for example, \'{e}tale group schemes (corresponding to actual groups) would correspond to systems governing actions of groups on fields by field automorphisms.
\\
\\
A geometric axiomatization of the class of existentially closed $(\mathcal{D}_m,\Delta_m)$-fields should not be very difficult, see Remark \ref{gremark}. The crucial technical point allowing a geometric axiomatization of the class of existentially closed $(\underline{\mathcal{D}},\Delta)$-fields may require an appropriate generalization of Theorem \ref{chain}. The proof given in this paper seems to be too specific for a possible generalization to this more general context. However, one could have proceeded in Section \ref{secaxioms} differently, more in the fashion of \cite{K3} where Ziegler's notion of a \emph{canonical $p$-basis} is used (however we would get then Theorem \ref{fgeo} only for a limited class of formal groups $F$). We sketch this approach below.

\subsection{Canonical $G$-tuples}\label{secgtup}
We generalize the notion of a canonical $p$-basis (see \cite{Zieg2}) from the case of the formalization of a vector group to the case of the formalization of an arbitrary algebraic group.
\begin{definition}\label{defpbasis}
Let $\mathbb{D}$ be a $G[m]$-derivation on $K$.
A subset $B\subseteq K$ is called a \emph{canonical $G$-tuple}, if $|B|=e$ and there is a $G[m]$-embedding $(k(G),\mathbb{D}[m]^G)\to \mathbf{K}$ such that $B$ is the image of the set of canonical parameters of $G$, where $\mathbb{D}[m]^G$ is the canonical $G[m]$-derivation from Example \ref{cangex}.
\end{definition}
\begin{remark}\label{geotuple}
If $\mathbf{L}$ is strict and of imperfection degree $e$, then any canonical $G$-tuple in $L$ is a $p$-basis. For $G=\ga^e$, we recover the notion of a \emph{canonical $p$-basis} from \cite{Zieg2}.
\end{remark}
\noindent
We define below a general property of algebraic groups.
\begin{definition}\label{existence}
We say that \emph{canonical $G$-tuples exist} if for any $m$ and any separably closed $G[m]$-field $\mathbf{L}$, whenever $[L:C_L]=p^e$, there is a canonical $G$-tuple in $L$.
\end{definition}
\begin{remark}\label{greater}
\begin{enumerate}
\item Definition \ref{defpbasis} can be phrased in a (much) greater generality using group scheme actions.
Let $\mathfrak{G}_0$ be a group subscheme of a group scheme $\mathfrak{G}$. Assume that $\mathfrak{G}_0$ acts (as a group scheme) on a scheme $V$. We say that this action has a \emph{canonical $\mathfrak{G}$-basis}, if there is an $\mathfrak{G}_0$-invariant morphism $V\to \mathfrak{G}$ such that the induced map
$$V\to \mathfrak{G}\times_{\mathfrak{G}/\mathfrak{G}_0}V/\mathfrak{G}_0$$
is an isomorphism.

\item The existence of canonical $\mathbb{G}_a^e$-tuples is shown
in \cite{Zieg2} and the existence of
canonical $\mathbb{G}_m$-tuples is shown in \cite{HK1}. Combining these
results, one can show the existence of canonical $G$-tuples for $G$ of
the form $\mathbb{G}_a^e\times\mathbb{G}_m^f$.

\item We do not attack here the problem of the \emph{existence} of canonical $G$-tuples for a given algebraic group $G$. This will be done in \cite{Hoff1} (as well as possible applications to the notion of ``$G$-thinness'').

\item The existence of canonical $G$-tuples implies (\emph{strong}) \emph{integrability} of $G$-derivations, as in (morally) \cite{Mats1} or as in \cite{HK1} (see also \cite{Tyc})
\end{enumerate}
\end{remark}
\noindent
The existence of canonical $G$-tuples gives rather directly (see \cite[Thm. 2.3]{K3}) another proof of Theorem \ref{chain} which is the only ingredient needed in Section \ref{secaxioms} requiring specific differential-algebraic arguments. The interpretation of the notion of the existence of canonical $G$-tuples from Remark \ref{greater}(1) looks promising for possible generalizations beyond the context of HS-derivations.
\bibliographystyle{plain}
\bibliography{harvard}

\end{document}